\def\E{\hskip.15ex\mathrm{E}\hskip.10ex}
\def\P{\mathrm{P}}
\def\phi{\varphi}
\newtheorem{theorem}{Theorem}
\newtheorem{Theorem}[theorem] {Theorem}
\newtheorem{Lemma}[theorem] {Lemma}
\newtheorem{Proposition}[theorem]{Proposition}
\newtheorem{Definition}[theorem]{Definition}
\newtheorem{Remark}[theorem] {Remark}
\newtheorem{Corollary}[theorem] {Corollary}
\newtheorem{Ex}[theorem]{Example}
\begin{document}
\global\long\def\E{\mathbb{E}}
\global\long\def\P{\mathbb{P}}
\global\long\def\N{\mathbb{N}}
\global\long\def\ind{\mathbb{I}}

\title{
{\normalsize\tt\hfill\jobname.tex}\\
On improved convergence conditions and bounds for Markov chains}
\author{A.Yu. Veretennikov\footnote{University of Leeds, UK; National Research University Higher School of Economics, and Institute for Information Transmission Problems, Moscow, Russia, email: ayv @ iitp.ru. The sections 1 and 2 of the article were prepared within the framework of the HSE University Basic Research Program; 
the section 3 was supported by Russian Foundation for Basic Research grant 20-01-00575a; in section 4 this research was supported in part through computational resources of HPC facilities at NRU HSE.}, 
M.A. Veretennikova\footnote{University of Warwick, UK, email: maveretenn @ gmail.com}
}

\maketitle
\begin{abstract}
Improved rates of convergence for ergodic Markov chains and relaxed conditions for them, as well as analogous convergence results for non-homogeneous  Markov chains are studied. The setting 
from the previous works is extended.  Examples are provided where the new bounds are better and where they give the same convergence rate as in the classical Markov -- Dobrushin inequality.

~

\noindent
{\bf  Keywords:}
Markov chains;  ergodicity; extension of Markov -- Dobrushin condition; convergence rate
\end{abstract}

\section{Introduction}\label{sec:intro}
It is well-known that for a discrete irreducible aperiodic homogeneous Markov chain $(X_n, n\ge 0)$ with a finite state space $S$ there is a unique stationary distribution $\mu$ towards which the distribution of the chain $\mu_n={\cal L}(X_n)$ converges exponentially fast uniformly with respect to the initial distribution $\mu_0$:
\begin{equation}\label{finiteMD}
\|\mu_n - \mu\|_{TV} \le 2 (1-\kappa)^n, 
\end{equation}
where 
\begin{equation*}
\kappa = \min_{i,i'}\sum_{j\in {\cal S}} p_{ij}\wedge p_{i'j}, 
\end{equation*}
and where $p_{ij}$ are the transition probabilities, see, e.g., \cite[Theorem of section 17]{Gnedenko}, or \cite[Section 4]{Kolm} (strictly speaking, in both classical sources only convergence is shown, but an exponential bound does follow from the proofs). A similar bound exists for general Markov chains, too (cf., among many other sources, \cite{Veretennikov17}); here $\|\cdot\|_{TV}$ is the distance of total variation between measures. Concerning the history of 
Markov -- Dobrushin's bounds (MD bounds in what follows) 
for homogeneous chains see 
\cite{Doeblin, Doob53, 
Kolm, 
Markov, Markov2,  
Seneta1, Seneta3}, 
et al.  Another well-known method of exponential estimates relates to eigenvalues of the transition probability matrix \cite[Chapter XIII, formula (96)]{Gantmacher}, \cite[Theorem 1.2]{Seneta1}. However, it is less general, being mainly restricted to the situations of finite state spaces, especially $|{\cal S}| = 2,3$ (the latter case can be found in various textbooks) and to the reversible Markov chains, see \cite{Stroock-Diaconis}. When the latter method is applicable, it provides the best results. Yet, generally speaking, it does not work for the non-homogeneous chains, unlike the classical MD approach, the new extension of which is presented in this paper. 

Note that for the non-homogeneous MC a bound similar to (\ref{finiteMD}) holds true, 
\begin{equation}\label{finiteMDn}
\|\mu_n - \mu'_n\|_{TV} \le 2 \prod_{t=0}^{n-1}(1-\kappa_t), 
\end{equation}
where 
\begin{equation*}
\kappa_t = \min_{i,i'}\sum_{j\in {\cal S}} p_{ij}(t)\wedge p_{i'j}(t), 
\end{equation*}
where $p_{ij}(t)$ are the transition probabilities at time $t$ and $\mu_n$ and $\mu'_n$ are distributions of the same MC with any two different initial distributions  $\mu_0$ and $\mu'_0$; of course, whether or not (\ref{finiteMDn}) implies convergence depends on the divergence of the series $\sum_{t=1}^{\infty}\kappa_t$. The bound (\ref{finiteMDn}) follows from the calculus analogous to the one for the homogeneous case as in \cite{Gnedenko}; see \cite[Theorem 4.III, inequality (29)]{Kolm}.

Provisional variants of the new bound for homogeneous MC based on versions of Markov coupling were proposed in papers \cite{BV} and \cite{Veretennikov17} (see \cite{Griffeath, Lindvall, Thorisson, Vaserstein} about coupling). In the present paper it turned out to be possible to relax the requirement of the unique dominating measure for all transition kernels,  and to do it in a more general state space (in \cite{BV} the state space was finite-dimensional Euclidean). The non-homogeneous case is treated in a separate section despite many similarities to the homogeneous one: the main reason is that in this case there is no invariant measure.  Finally, in the last section of the paper quite a few examples are presented (in \cite{BV} and \cite{Veretennikov17} there were no examples at all, while \cite{VV2020} provides four), which show that in the majority of cases the classical MD bound (\ref{finiteMD}) may be, indeed, improved effectively.

For any process $(X_n, \, n\ge 0)$ denote
$$
{\cal F}^X_n = \sigma(X_k: \, k\le n); \quad {\cal F}^X_{(n)} = \sigma(X_n). 
$$ 
Also, the following notations from the theory of Markov processes will be accepted (cf. \cite{Dynkin}): the index $x$ in $\mathbb E_x$ or $\mathbb P_x$ signifies the expectation, or, respectively, the probability measure related to the non-random initial state of the process $X_0=x$. This initial state may be also random with some distribution $\mu$, in which case notations  $\mathbb E_\mu$ and $\mathbb P_\mu$ may be used.

The paper consists of four sections. Section \ref{sec:intro} is this introduction. Section \ref{sec:homo} contains the presentation of the version of Markov coupling for the homogeneous case, the operator approach  and the main theorem \ref{lastthm2} for this case; most proofs of lemmata in this section are dropped because they are special cases of their non-homogeneous analogues proved in the next section.  Section \ref{sec:nonh} treats results for the non-homogeneous case. Section \ref{sec:ex} offers examples where the new bound (for the homogeneous case) is better and where it gives asymptotically the same result as in the classical MD inequality. Both bounds are also compared to the estimate provided by the eigenvalue method.

\section{Markov coupling, homogeneous case}\label{sec:homo} 
We firstly consider a homogeneous Markov process (MP) in discrete time $(X_n, \, n\ge 0)$ on a general (nonempty) state space $S$ with a topology and with a Borel sigma-algebra $\sigma(S)$; as usual in Markov processes, any state $\{x\}$ belongs to $\sigma(S)$.  
If the state space $S$ is finite, then $|S|$ denotes the number of its elements and $\cal P$ stands for the transition matrix $\left(p_{ij}\right)_{1\le i,j \le |S|}$ of the process. Such a notation may also be applied in the case where $S$ is countable.

All proofs of the lemmata 
of this section except for the proof of lemma \ref{odvuh} are postponed till the next section where their analogues for the non-homogeneous cases will be established; they include homogeneous situations, too.

In the well-known inequality (\ref{exp_bd32}) of the Proposition \ref{thm_erg21}  stated below for the reader's convenience (without proof), which generalises the bound (\ref{finiteMD}), it is assumed that the Markov--Dobrushin's constant is positive:
\begin{equation}\label{MD}
 \kappa := \inf_{x,x'} \int \left(\frac{\P_{x'}(1,dy)}{\P_x(1,dy)}\wedge 1 \right)\P_x(1,dy) > 0. 
\end{equation}
A similar coefficient and condition can be introduced for any number of steps $m\ge 1$:
\begin{equation}\label{MDm}
 \kappa^{(m)} := \inf_{x,x'} \int \left(\frac{\P_{x'}(m,dy)}{\P_x(m,dy)}\wedge 1 \right)\P_x(m,dy) >0. 
\end{equation}
In the main result of this section  -- Theorem \ref{lastthm2} -- this condition will be relaxed. 
Note that here $\displaystyle \frac{\P_{x'}(1,dy)}{\P_x(1,dy)}$ is understood in the sense of the density of the absolute continuous component of the numerator with respect to the denominator measure. For brevity we will be using a simplified notation $\P_x(dz)$ for $\P_x(1,dz)$. Note that for any Borel measurable $A$, the function  $\P_x(A)$ is Borel measurable with respect to $x$ which is a standard requirement in Markov processes \cite{Dynkin}. Due to linearity such measurability with respect to the pair $x,x'$ will be also valid for the measure $\Lambda_{x,x'}$ defined below. 
For two fixed states $x, x'$ denote 
\begin{equation*}
\Lambda_{x,x'}(dz) := (\P_x(dz) + \P_{x'}(dz))/2.
\end{equation*} 
Likewise, for any $m\ge 1$, let 
\begin{equation*}
\Lambda^{(m)}_{x,x'}(dz) := (\P_x(m,dz) + \P_{x'}(m,dz))/2.
\end{equation*} 
Note that $\Lambda_{x,x'}(dz) = \Lambda_{x',x}(dz)$, and $\Lambda^{(m)}_{x,x'}(dz) = \Lambda^{(m)}_{x',x}(dz)$.
\begin{Lemma}\label{newMD2}
The following representation for the condition (\ref{MD}) holds true, 
\begin{equation}\label{betterMD2}
 \kappa = \inf_{x,x'} \int \left(\frac{\P_{x'}(dy)}{\Lambda_{x,x'}(dy)}\wedge \frac{\P_{x}(dy)}{\Lambda_{x,x'}(dy)} \right)\Lambda_{x,x'} (dy)>0. 
\end{equation}
The same is valid for $\kappa^{(m)}$ for any $m$,
$$
 \kappa^{(m)} = \inf_{x,x'} \int \left(\frac{\P_{x'}(m,dy)}{\Lambda^{(m)}_{x,x'}(dy)}\wedge \frac{\P_{x}(m,dy)}{\Lambda^{(m)}_{x,x'}(dy)} \right)\Lambda^{(m)}_{x,x'} (dy)>0.
$$

\end{Lemma}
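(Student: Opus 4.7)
The plan is to show that both the original expression (\ref{MD}) and the new expression (\ref{betterMD2}) compute, for each fixed pair $(x,x')$, the same fundamental quantity: the affinity (common mass) of the two probability measures $\P_x$ and $\P_{x'}$. Since this affinity is well known not to depend on the choice of a common dominating measure, the two infima over $(x,x')$ will automatically coincide. The $m$-step statement then follows by replacing $\P_{x}(1,\cdot)$, $\P_{x'}(1,\cdot)$ with $\P_{x}(m,\cdot)$, $\P_{x'}(m,\cdot)$ throughout, without any further change.

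For the bookkeeping I would fix $x,x'$, set $\Lambda:=\Lambda_{x,x'}=(\P_x+\P_{x'})/2$, and observe that both $\P_x$ and $\P_{x'}$ are absolutely continuous with respect to $\Lambda$. Writing $f:=d\P_x/d\Lambda$ and $g:=d\P_{x'}/d\Lambda$, one has $f,g\ge 0$ and $f+g=2$ $\Lambda$-a.e., and the right-hand side of (\ref{betterMD2}) reduces at once to
\begin{equation*}
\int\Bigl(\tfrac{d\P_{x'}}{d\Lambda}\wedge\tfrac{d\P_x}{d\Lambda}\Bigr)\,d\Lambda \;=\; \int (f\wedge g)\,d\Lambda.
\end{equation*}

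For the original expression I would unwind the notation exactly as prescribed in the paragraph following (\ref{MDm}): interpret $d\P_{x'}/d\P_x$ as the Radon--Nikodym derivative of the absolutely continuous component of $\P_{x'}$ with respect to $\P_x$. On $\{f>0\}$ this derivative equals $g/f$, while the complementary set $\{f=0\}$ is $\P_x$-null and therefore contributes nothing to integrals against $\P_x$. Consequently,
\begin{equation*}
\int\Bigl(\tfrac{d\P_{x'}}{d\P_x}\wedge 1\Bigr)\,d\P_x \;=\; \int_{\{f>0\}}\Bigl(\tfrac{g}{f}\wedge 1\Bigr)\,f\,d\Lambda \;=\; \int_{\{f>0\}}(f\wedge g)\,d\Lambda \;=\; \int (f\wedge g)\,d\Lambda,
\end{equation*}
the last equality using $f\wedge g=0$ on $\{f=0\}$. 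This matches the expression computed above, so taking $\inf_{x,x'}$ on both sides yields (\ref{betterMD2}).

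There is no real obstacle here; the only point requiring care is the correct handling of $d\P_{x'}/d\P_x$ when $\P_{x'}\not\ll\P_x$, and this is exactly what the convention stated in the text fixes. The singular part of $\P_{x'}$ relative to $\P_x$ does not contribute to either integrand, which is precisely why passing to the symmetric dominating measure $\Lambda_{x,x'}$ leaves the value of $\kappa$ unchanged while making the expression manifestly symmetric in $(x,x')$.
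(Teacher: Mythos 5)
Your proposal is correct and follows essentially the same route as the paper: the paper's proof (given for the non-homogeneous analogue, Lemma \ref{newMDt}, which covers the homogeneous case) also introduces $f_{x,x'}=d\P_x/d\Lambda_{x,x'}$, substitutes $\P_x(dy)=f_{x,x'}(y)\Lambda_{x,x'}(dy)$, and cancels to obtain $\int(f\wedge g)\,d\Lambda_{x,x'}$. Your write-up is in fact slightly more careful than the paper's, since you explicitly handle the set $\{f=0\}$ and the singular part of $\P_{x'}$ with respect to $\P_x$, which the paper's one-line cancellation glosses over.
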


\begin{Remark}
Note that the right hand side in (\ref{betterMD2}), actually, does not depend on any particular reference measure $\Lambda_{x,x'}$ (even if it is not symmetric with respect to $x,x'$), i.e., for any other measure with respect to which both $\P_{x'}(dy)$ and $\P_{x}(dy)$ are absolutely continuous the formula (\ref{betterMD2}) gives the same result. Indeed, it follows straightforwardly from the fact that if $d\Lambda_{x,x'} <\!\!< d\tilde\Lambda_{x,x'}$ and $d\Lambda_{x,x'} = \phi_{x,x'} d\tilde\Lambda_{x,x'}$, then we get, 
\begin{eqnarray*}
&\displaystyle  \int \left(\frac{\P_{x'}(dy)}{\Lambda_{x,x'}(dy)}\wedge \frac{\P_{x}(dy)}{\Lambda_{x,x'}(dy)} \right)\Lambda_{x,x'} (dy) 
 \\\\
&\displaystyle  = \int \left(\frac{\P_{x'}(dy)}{\phi_{x,x'}\tilde\Lambda_{x,x'}(dy)}\wedge \frac{\P_{x}(dy)}{\phi_{x,x'}(y)\tilde\Lambda_{x,x'}(dy)} \right)\phi_{x,x'}(y) 1(\phi_{x,x'}(y)>0)\tilde\Lambda_{x,x'} (dy) 
 \\\\
&\displaystyle  = \int \left(\frac{\P_{x'}(dy)}{\tilde\Lambda_{x,x'}(dy)}\wedge \frac{\P_{x}(dy)}{\tilde\Lambda_{x,x'}(dy)} \right)1(\phi_{x,x'}(y)>0)\tilde\Lambda_{x,x'} (dy).
\end{eqnarray*}
However, $\P_{x'}(1,dy)<\!\!< \Lambda_{x,x'}(dy) = \phi_{x,x'}(y) \tilde \Lambda_{x,x'}(dy)$, so for any measurable $A$ we have $\int_A \P_{x'}(dy) 1(\phi_{x,x'}(y)=0) = 0$ and the same for $\P_x(dy)$, which means that, actually, 
\[
\int \left(\frac{\P_{x'}(dy)}{\tilde\Lambda_{x,x'}(dy)}\wedge \frac{\P_{x}(dy)}{\tilde\Lambda_{x,x'}(dy)} \right)1(\phi_{x,x'}(y)>0)\tilde\Lambda_{x,x'} (dy) = \int \left(\frac{\P_{x'}(dy)}{\tilde\Lambda_{x,x'}(dy)}\wedge \frac{\P_{x}(dy)}{\tilde\Lambda_{x,x'}(dy)} \right)\tilde\Lambda_{x,x'} (dy).
\]
Respectively, if there are two reference measures $\Lambda_{x,x'}$ and, say, $\Lambda'_{x,x'}$, then we may take $\tilde\Lambda_{x,x'} = \Lambda_{x,x'} + \Lambda'_{x,x'}$, and the coefficients computed by using each of the two -- $\Lambda_{x,x'}$ and $\Lambda'_{x,x'}$ -- will be represented via $\tilde\Lambda_{x,x'}$ in the same way. 
\end{Remark}

~

Here is the key notion in the following presentation in this section: denote 
\[
\kappa(x,x') : = \int \left(\frac{\P_{x'}(dy)}{\P_x(dy)}\wedge 1 \right)\P_x(dy).
\]
Also, let 
\[
\kappa^{(m)}(x,x') : = \int \left(\frac{\P_{x'}(m,dy)}{\P_x(m,dy)}\wedge 1 \right)\P_x(m,dy).
\]
Clearly, for any $x,x'\in S$, 
\begin{equation*}
\kappa(x,x') \ge \kappa, \quad 
\kappa^{(m)}(x,x') \ge \kappa^{(m)}. 
\end{equation*}

\begin{Lemma}\label{jj2}
For any $x,x'\in S$, and for any $m\ge 1$
\[
\kappa^{(m)}(x,x') = \kappa^{(m)}(x',x).
\]
\end{Lemma}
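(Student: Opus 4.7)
The plan is to reduce $\kappa^{(m)}(x,x')$ to an expression that is manifestly symmetric in its two arguments, using the same manipulation that underlies Lemma \ref{newMD2}. The natural reference measure is $\Lambda^{(m)}_{x,x'} = (\P_x(m,\cdot) + \P_{x'}(m,\cdot))/2$, which dominates both $\P_x(m,\cdot)$ and $\P_{x'}(m,\cdot)$ and, crucially, satisfies $\Lambda^{(m)}_{x,x'} = \Lambda^{(m)}_{x',x}$.

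First I would set $p(y) = d\P_x(m,\cdot)/d\Lambda^{(m)}_{x,x'}$ and $p'(y) = d\P_{x'}(m,\cdot)/d\Lambda^{(m)}_{x,x'}$, and recall that the ``density'' $\P_{x'}(m,dy)/\P_x(m,dy)$ appearing in the definition of $\kappa^{(m)}(x,x')$ is, by the convention stated in the paragraph after (\ref{MDm}), the density of the absolutely continuous component; on $\{p>0\}$ it coincides with $p'/p$. A short rewriting then gives
\[
\kappa^{(m)}(x,x') = \int_{\{p>0\}} \bigl(p'(y)\wedge p(y)\bigr)\,\Lambda^{(m)}_{x,x'}(dy) = \int \bigl(p'(y)\wedge p(y)\bigr)\,\Lambda^{(m)}_{x,x'}(dy),
\]
the last equality because $p'\wedge p = 0$ on $\{p=0\}$. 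This is exactly the pointwise version of the identity used in Lemma \ref{newMD2}, applied to the fixed pair $(x,x')$ rather than under an infimum.

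The right-hand side is manifestly symmetric: the integrand $p'\wedge p$ is symmetric under interchange of $p$ and $p'$, and the reference measure $\Lambda^{(m)}_{x,x'}$ is symmetric in $x,x'$. Swapping $x$ and $x'$ in the starting expression produces the same integral, which equals $\kappa^{(m)}(x',x)$. This yields the claim.

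The only delicate point, and the one I expect to spend care on, is the Radon--Nikodym technicality already addressed in the Remark following Lemma \ref{newMD2}: the ratio in the original definition ignores singular parts, so one must check that nothing is lost when both measures are represented against the common dominating measure $\Lambda^{(m)}_{x,x'}$. Once this standard observation is in place, the symmetry is immediate and no further computation is needed.
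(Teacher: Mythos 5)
Your proof is correct and follows essentially the same route as the paper's: the paper proves the non-homogeneous analogue (Lemma \ref{jj}) by rewriting the integral against the symmetric reference measure $\Lambda^{(m)}_{t,x,x'}$ so that the integrand becomes the manifestly symmetric minimum of the two densities. Your additional care about the absolutely continuous component matches the Remark following Lemma \ref{newMD2} and is a welcome, if standard, precaution.
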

\noindent

\begin{Definition}
If an MC $(X_n)$ satisfies the condition (\ref{MD}), or (\ref{MDm}) with any $m\ge 1$ -- we call it the {\bf MD-condition} or {\bf MDm-condition}, respectively, in the sequel -- then we call this process {\em Markov--Dobrushin's or the {\bf MD-process}}.
\end{Definition}

This condition in a simple scenario for finite chains was introduced by Markov himself \cite[Section 5]{Markov}; later on, for non-homogeneous Markov processes its analogue was suggested and used by Dobrushin \cite{Dobrushin}. So, we call it Markov--Dobrushin's condition, as already suggested earlier by E. Seneta. Note that in all cases $\kappa \le 1$, and $\kappa^{(m)} \le 1$. The case $\kappa = 1$ corresponds to the i.i.d. sequence $(X_n)$. In the opposite extreme situation where the transition kernels are singular for different $x$ and $x'$, we have $\kappa = 0$. 
The MD-condition (\ref{MD}),  as well as (\ref{MDm}),  are most useful because they both provide {\bf effective} quantitative upper bounds for the convergence rate of a Markov chain towards its (unique) invariant measure in the total variation metric. The following classical result is provided for comparison: the bound (\ref{exp_bd32}) can be found in most textbooks on ergodic Markov chains; the bound (\ref{exp_bd42}) is an easy generalisation, also well-known, and (\ref{exp_bd52}) is just another version of (\ref{exp_bd42}).

\begin{Proposition}\label{thm_erg21}
Let the assumption (\ref{betterMD2}) 
hold true. Then the process $(X_n)$ is ergodic, i.e., there exists  a limiting probability measure $\mu$, which is stationary and such that the uniform bound is satisfied for every $n$, 
\begin{equation}\label{exp_bd32}
 \sup_{x}\sup_{A\in S} |\P_x(n,A) - \mu(A)| \le (1-\kappa)^{n}.
\end{equation}
Also, for any $m\ge 1$
\begin{equation}\label{exp_bd42}
\sup_{x}\sup_{A\in S} |\mu^x_n(A) - \mu(A)| \le  (1-\kappa^{(m)})^{[n/m]}, 
\end{equation}
and 
\begin{equation}\label{exp_bd52}
\sup_{x}\|\mu^x_n - \mu\|_{TV} \le 2 (1-\kappa^{(m)})^{[n/m]} (1-\kappa)^{n-m[n/m]}. 
\end{equation}

\end{Proposition}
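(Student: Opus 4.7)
The plan is to use the classical Dobrushin oscillation contraction, exploiting the common reference measure $\Lambda_{x,x'}$ from Lemma~\ref{newMD2}. The starting point is the one-step total-variation contraction
\[
\sup_{x,x'}\sup_A |\P_x(1,A)-\P_{x'}(1,A)| \le 1-\kappa.
\]
Writing $f_x=d\P_x/d\Lambda_{x,x'}$ and $f_{x'}=d\P_{x'}/d\Lambda_{x,x'}$, one has
\[
|\P_x(A)-\P_{x'}(A)| \le \int (f_x-f_{x'})_+\,d\Lambda_{x,x'} = 1 - \int (f_x\wedge f_{x'})\,d\Lambda_{x,x'} = 1-\kappa(x,x')\le 1-\kappa,
\]
the penultimate equality being Lemma~\ref{newMD2}. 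I will use throughout the elementary fact that, for probability measures $\nu_1,\nu_2$ and any measurable $f$ with range in $[0,c]$,
\[
\Bigl|\int f\,d(\nu_1-\nu_2)\Bigr| \le c\,\sup_A|\nu_1(A)-\nu_2(A)|,
\]
which follows from the Hahn decomposition of the signed measure $\nu_1-\nu_2$ (whose positive and negative parts have equal total mass $\sup_A|\nu_1(A)-\nu_2(A)|$).

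Next, setting $\mathrm{osc}_n(A):=\sup_x\P_x(n,A)-\inf_x\P_x(n,A)$, I would iterate via Chapman--Kolmogorov. Since
\[
\P_x(n,A)-\P_{x'}(n,A) = \int (\P_x(1,dy)-\P_{x'}(1,dy))\bigl(\P_y(n-1,A)-\inf_z\P_z(n-1,A)\bigr),
\]
with the centred inner function lying in $[0,\mathrm{osc}_{n-1}(A)]$, the auxiliary bound yields $\mathrm{osc}_n(A)\le(1-\kappa)\,\mathrm{osc}_{n-1}(A)$, and hence $\mathrm{osc}_n(A)\le(1-\kappa)^n$. Consequently, for each $A$ the family $\{\P_x(n,A)\}_n$ is uniformly Cauchy in $x$, so $\mu(A):=\lim_n\P_x(n,A)$ exists and is independent of $x$. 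A standard monotone-class argument upgrades $\mu$ to a probability measure; passing to the limit in the identity $\P_x(n+1,A)=\int\P_x(n,dy)\P_y(1,A)$ yields $\mu=\mu P$, while $|\P_x(n,A)-\mu(A)|\le\mathrm{osc}_n(A)\le(1-\kappa)^n$ is precisely (\ref{exp_bd32}).

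For (\ref{exp_bd42}) I would apply the same argument to the $m$-step sub-chain with kernel $\P_{\cdot}(m,\cdot)$ and coefficient $\kappa^{(m)}$, obtaining $\sup_A|\P_x(mk,A)-\mu(A)|\le(1-\kappa^{(m)})^k$ for every integer $k$; for $n=mk+r$ with $k=[n/m]$, the identity $\P_x(n,A)-\mu(A)=\int(\P_x(mk,dy)-\mu(dy))\P_y(r,A)$ combined with the auxiliary bound at $c=1$ yields (\ref{exp_bd42}). For the sharper estimate (\ref{exp_bd52}) I would instead centre $y\mapsto \P_y(r,A)$ by $\inf_z\P_z(r,A)$, so that the inner function lies in $[0,\mathrm{osc}_r(A)]\subset[0,(1-\kappa)^r]$; the auxiliary inequality then gives $|\P_x(n,A)-\mu(A)|\le(1-\kappa)^r(1-\kappa^{(m)})^{[n/m]}$, and doubling converts $\sup_A|\cdot|$ into $\|\cdot\|_{TV}$. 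Since the statement is classical, there is no serious obstacle; the only mildly delicate point is Borel measurability of $x\mapsto\P_x(n,A)$, needed for taking $\sup/\inf$, which is standard in the sense of \cite{Dynkin}.
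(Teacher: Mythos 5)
The paper states this Proposition explicitly \emph{without proof}, as a classical result quoted for comparison, so there is no in-paper argument to measure against; your oscillation--contraction proof is the standard one and is correct, including the sharper centring by $\inf_z\P_z(r,A)$ needed for (\ref{exp_bd52}). Two small points of hygiene: countable additivity of $\mu$ is best justified not by a monotone-class argument but by the uniformity in $A$ of the bound $|\P_x(n,A)-\mu(A)|\le(1-\kappa)^n$ combined with countable additivity of each $\P_x(n,\cdot)$, and the Cauchy-in-$n$ property uses that $\sup_x\P_x(n,A)$ is non-increasing and $\inf_x\P_x(n,A)$ is non-decreasing, both immediate from Chapman--Kolmogorov since $\P_x(n+1,A)$ is an average of $\P_y(n,A)$.
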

Clearly, if the assumption (\ref{MD}) fails, the estimate (\ref{exp_bd32}) is still valid, but does not contain any information since the difference between two probabilities cannot exceed one in any case. Similarly, for  (\ref{exp_bd42}) and  (\ref{exp_bd52}) to make some  sense it is required that $\kappa^{(m)} > 0$, although, without this condition both inequalities are still valid. There are natural examples where the rate provided by (\ref{exp_bd42}) can be considerably better than (\ref{exp_bd32}): for example, it is just possible that $\kappa=0$, while $\kappa^{(2)}>0$.

The following  important folklore lemma answers the following question: suppose we have two distributions, which are not singular, and the ``common area'' under the two densities equals some positive constant $q$. Is it possible to realise these two distributions on the same probability space so that the two corresponding random variables {\em coincide} exactly with probability $q$? (Let us emphasize that the authors of this paper are not the authors of this lemma, but it is unknown to us where it was first published.)

\subsection{Coupling lemma}

\begin{Lemma}[``Of two random variables'']\label{odvuh}
Let $X^{1}$ and $X^2$ be two random variables on their 
probability spaces $(\Omega^1, {\cal F}^1, \mathbb P^1)$ and $(\Omega^2, {\cal F}^2, \mathbb P^2)$ and with densities $p^1$ and $p^2$ with respect to some reference measure $\Lambda$, correspondingly.  Then, if 
\begin{equation*}
q := \int \left(p^1(x)\wedge p^2(x)\right) \Lambda(dx) > 0, 
\end{equation*}
then there exists a probability space $(\Omega, {\cal F}, \mathbb P)$ and two random variables on it $\tilde X^1, \tilde X^2$, such that 
\begin{equation*}
{\cal L}(\tilde X^j) ={\cal L}(X^j), \; j=1,2, \quad \& \quad \mathbb  P(\tilde X^1 = \tilde X^2) = q. 
\end{equation*}
\end{Lemma}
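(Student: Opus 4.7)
The plan is to use the standard \emph{maximal coupling} construction: write each density as the common infimum $r := p^1 \wedge p^2$ plus a residual, then on an auxiliary probability space sample from the normalised common part with probability $q$ (producing the coincidence event) and from two independent normalised residuals otherwise.

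Concretely, the construction I would set up goes as follows. Put $\nu(dx) := q^{-1} r(x)\, \Lambda(dx)$ and $\nu_j(dx) := (1-q)^{-1}(p^j(x) - r(x))\, \Lambda(dx)$ for $j = 1,2$, each of which is a probability measure (the case $q = 1$ is trivial and handled separately). On a product probability space $(\Omega, {\cal F}, \mathbb P)$ carrying independent ingredients -- a Bernoulli$(q)$ variable $\xi$, a variable $Z$ with law $\nu$, and variables $Y_1, Y_2$ with laws $\nu_1,\nu_2$ -- I define $\tilde X^j := Z$ on $\{\xi = 1\}$ and $\tilde X^j := Y_j$ on $\{\xi = 0\}$.

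Both conclusions should then follow by a direct check. For the marginal law, for any measurable $A \subset S$,
$$
\mathbb P(\tilde X^j \in A) \;=\; q\, \nu(A) + (1-q)\, \nu_j(A) \;=\; \int_A r\, d\Lambda + \int_A (p^j - r)\, d\Lambda \;=\; \int_A p^j\, d\Lambda,
$$
so ${\cal L}(\tilde X^j) = {\cal L}(X^j)$. For the coincidence probability, the inclusion $\{\xi = 1\} \subset \{\tilde X^1 = \tilde X^2\}$ gives the lower bound $q$ immediately.

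The only genuinely interesting step -- and therefore the main (though minor) obstacle -- is to upgrade this to equality, i.e.\ to show that the two copies almost surely disagree on $\{\xi = 0\}$, so that the coincidence probability is exactly $q$ and not merely at least $q$. The key observation will be that $\nu_1$ and $\nu_2$ are carried by the disjoint measurable sets $\{p^1 > p^2\}$ and $\{p^2 > p^1\}$ respectively. Combined with the independence of $Y_1$ and $Y_2$ this forces $\mathbb P(Y_1 = Y_2) = \int \nu_2(\{y\})\, \nu_1(dy) = 0$; only the measurability of singletons in $S$ (already standard for Markov processes here) is invoked, so the argument works in the general state space setting of the paper.
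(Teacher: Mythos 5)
Your construction is exactly the paper's: the same decomposition into the normalised common part $p^1\wedge p^2$ and the two normalised residuals, the same independent Bernoulli switch on a product space, the same marginal check, and the same disjoint-support (mutual singularity) observation to upgrade $\mathbb P(\tilde X^1=\tilde X^2)\ge q$ to equality. The proposal is correct and matches the paper's proof essentially step for step.
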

See, for example,  \cite{Veretennikov17}; as it was said, this lemma will be used in the sequel. 
We briefly remind the proof because its short calculus will be needed in the proof of the next Lemma.

\medskip

\noindent
{\em Proof of the Lemma \ref{odvuh}.} 
{\bf 1. Construction.} We will now need {\em four} new independent random variables: a Bernoulli random variable  
$\zeta$ with $\mathbb P(\zeta=0) = q$ and $\eta^{1,2}$  and $\xi$ with the densities with respect to the measure $\Lambda$, respectively, 
\begin{eqnarray}\label{laws}
& \displaystyle p^{\eta^1}(x) := \frac{p^1 - p^1\wedge p^2}{\displaystyle\int (p^1 - p^1\wedge p^2)(y)\Lambda(dy)}(x), \quad
p^{\eta^2}(x) := \frac{p^2 - p^1\wedge p^2}{\displaystyle\int (p^2 - p^1\wedge p^2)(y)\Lambda(dy)}(x),
 \nonumber \\ \\ \nonumber 
& \displaystyle p^{\xi}(x) := \frac{ p^1\wedge p^2}{\displaystyle\int (p^1\wedge p^2)(y)\Lambda(dy)}(x),
\end{eqnarray}
where in the last expression it is assumed that the denominator is strictly  positive; the alternative case will be explained in the end of the proof; in the first two expressions it is also assumed that the denominator is strictly positive, and the alternative will be treated in the last step of the proof. 

We may assume that they are all defined on their own probability spaces and eventually we consider the {\bf direct product} of these probability spaces denoted as $(\Omega, {\cal F}, \mathbb P)$. As a result, they are all defined on one unique probability space and they are independent there. 
Now, on the same product of all  probability spaces just mentioned, let us define the random variables 
\begin{equation}\label{ety}
\tilde X^1:=  \eta^1 1(\zeta \not=0) + \xi 1(\zeta=0) , \quad \& 
\quad \tilde X^2:=  \eta^2 1(\zeta \not=0) +\xi 1(\zeta=0).
\end{equation}

~

\noindent
{\bf 2. Verification.} From (\ref{ety}), clearly, 
\[
\mathbb P(\tilde X^1=\tilde X^2) \ge  \mathbb P(\zeta=0) = q. 
\]
Yet, if $q<1$ then the distributions of $\eta^1$ and $\eta^2$ are singular, so in fact, we have an equality
\[
\mathbb P(\tilde X^1=\tilde X^2) = q. 
\]
If $q=1$, then we have, 
\[
\mathbb P(\tilde X^1=\tilde X^2) = q = 1. 
\]

~

\noindent
Next, since $\zeta$, $\xi$ and $\eta^{1}$ are independent on $(\Omega, \cal F, \mathbb P)$, then for any bounded measurable function $g$ we have, 
\begin{eqnarray*}
&\displaystyle \mathbb E g(\tilde X^1) 
= \mathbb E g(\tilde X^1)1(\zeta=0) + \mathbb E g(\tilde X^1)1(\zeta \not=0) 
 \\\\
&\displaystyle  
= \mathbb E g(\xi)1(\zeta=0) + \mathbb E g(\eta^1)1(\zeta \not=0) = \mathbb E g(\xi) \mathbb E 1(\zeta=0) + \mathbb E g(\eta^1) \mathbb E 1(\zeta \not=0) 
 \\\\
&\displaystyle = q \int g(y) p^{\xi }(y)\,\Lambda(dy)  
+(1- q) \int g(y) p^{\eta^1}(y)\,\Lambda(dy) 
 \\\\
&\displaystyle = q \int g(x) \frac{p^1\wedge p^2}{\displaystyle \int (p^1\wedge p^2)\Lambda(dy)}(x) \Lambda(dx) + (1- q) \int g(x) \frac{p^1 - p^1\wedge p^2}{\displaystyle \int (p^1 - p^1\wedge p^2)(y)\Lambda(dy)}(x)\Lambda(dx)
 \\\\
&\displaystyle =  \int g(x) p^1\wedge p^2 (x) \Lambda(dx) +  \int g(x) (p^1 - p^1\wedge p^2) (x)\Lambda(dx)
= \int g(y) p^1(y)\,dy = \mathbb E g(X^1).  
\end{eqnarray*}
For $\tilde X^2$ the arguments are similar, so also $\mathbb E g(\tilde X^2) = \mathbb E g( X^2)$.

\medskip

\noindent
{\bf 3.}
In the considerations above it was assumed that all the denominators are strictly positive. If any of them equals zero, the claim of the Lemma remains valid and becomes trivial. Yet, for the sequel it makes sense to re-define all four random variables in such cases, too. 

In the case $q=1$, clearly, $p^1 = p^2$. Let 
$$
p^{\eta^1}(x)= p^{\eta^2}(x) = p^{\xi}(x);  
$$
the definition of $\zeta$ does not change, but this random variable is then just a constant $\zeta = 0$ almost surely. The result is that the distributions of $X^1$ and $X^2$ coincide, so the formula (\ref{ety}) above can be implemented. 

In the case $q=0$, the only change needed is for $p^\xi$, because the denominator in the definition of this density equals zero. In fact, $p^\xi$ here can be defined arbitrarily and it would not change the result because the two distributions are singular with respect to each other. For the definiteness, we propose $p^\xi = p^1$ (however in the application of this lemma in the next subsection it will be re-defined, which will not change the conclusion). The same formula (\ref{ety}) can be used; yet, coupling is impossible, which is in agreement with the fact that $q=0$. 
The Lemma \ref{odvuh} is proved.

\subsection{Markov coupling (homogeneous) }\label{Sec22}
In this subsection it is explained how to apply the general coupling method to Markov chains in general state spaces $(S, {\cal S})$. Various presentations of this method may be found in \cite{Kalash, Lindvall, Nummelin, Thorisson, Vaserstein},  et al. This section follows the lines from \cite{BV}, which, in turn, is based on \cite{Vaserstein}. Note that in \cite{BV} the state space was $\mathbb R^1$; however, in $\mathbb R^d$ all formulae remain the same and this may be further extended to more general state spaces. 

Let us generalize the Lemma \ref{odvuh} to a sequence of random variables and present our coupling
construction for Markov chains based on \cite{Vaserstein}. Consider two versions $(X^1_n), (X^2_n)$ of the same Markov process with two initial distributions $\mu_0^1$ and  $\mu_0^2$ respectively (this does not exclude the case of non-random initial states). Denote 
\begin{equation*}
\kappa(0) :=
\int \left(\frac{\mu_0^1(dy)}{\mu_0^2(dy)}\wedge 1 \right)\mu_0^2(dy).
\end{equation*}
It is clear that $0\le \kappa(0)\le1$ similarly to $\kappa(u,v)$ for all $u,v$.  
We assume that $X^1_0$ and $X^2_0$ have different distributions, so $\kappa(0)<1$. Otherwise, we obviously have
$X^1_n\stackrel{d}{=}X^2_n$ (equality in distribution) for all $n$, and the coupling can be made trivially, for example, by letting  $\widetilde X^1_n= \widetilde
X^2_n:=X^1_n$.

Let us introduce a new, vector-valued homogeneous {\bf Markov process} $\left(\eta^1_n,\eta^2_n,\xi_n,\zeta_n\right)$. 
The values $\left(\eta^1_0,\eta^2_0,\xi_0,\zeta_0\right)$ are chosen directly on the basis of the Lemma \ref{odvuh} as $\left(\eta^1,\eta^2,\xi,\zeta\right)$, according to the distributions in (\ref{laws}).
In particular, if $\kappa_0=0$ then we can set
\begin{equation*}
\eta^1_0:=X^1_0,\; \eta^2_0:=X^2_0,\; \xi_0:=X^1_0,\; \zeta_0:=1.
\end{equation*}
(The value for $\xi_0$ is not important in this case.)
If $\kappa_0=1$ then we can set
\begin{equation*}
\eta^1_0:=X^1_0,\; \eta^2_0:=X^1_0,\; \xi_0:=X^1_0,\; \zeta_0:=0.
\end{equation*}

Now, by induction, assuming that the random variables $\left(\eta^1_n,\eta^2_n,\xi_n,\zeta_n\right)$ have been determined for some $n$, let us show how to construct them for $n+1$. For this aim, we define the transition probability density $\phi$ with respect to the same measure $\Lambda_{x^1, x^2}$ (in fact, $\Lambda_{x^1, x^2} \times \Lambda_{x^1, x^2} \times \Lambda_{x^1, x^2}\times (\delta_0 + \delta_1)/2$) for this (vector-valued) process as follows,
\begin{equation}\label{process_eta}
\phi(x,y):=\phi_1(x,y^1)\phi_2(x,y^2)\phi_3(x,y^3) \phi_4(x,y^4),
\end{equation}
where $x=(x^1,x^2,x^3,x^4)$, $y=(y^1,y^2,y^3,y^4)$, and if
 $0<\kappa(x^1,x^2)<1$, then
\begin{align}
&\displaystyle \phi_1(x,u):=\frac{p(x^1,u)-p(x^1,
u)\wedge p(x^2,u)}{1-\kappa(x^1,x^2)}, \quad
\phi_2(x,u):=\frac{p(x^2,u)-p(x^1,u)\wedge p(x^2,u)}{1-\kappa(x^1,x^2)},\label{phi_12}
 \\\nonumber\\
&\displaystyle \phi_3(x,u):=1(x^4=1)\frac{p(x^1,u)\wedge
 p(x^2,u)}{\kappa(x^1,x^2)}+1(x^4=0)p(x^3,u),\label{phi_3}
 \\\nonumber\\ 
&\displaystyle \phi_4(x,u):=1(x^4=1)\left(\delta_1(u)(1-\kappa(x^1,x^2))+ 
\delta_0(u)\kappa(x^1,x^2)\right) +1(x^4=0)\delta_0(u)\label{phi_4}, 
\end{align}
where $\delta_i(u)$ is the Kronecker symbol, $\delta_i(u) = 1(u=i)$, or, in other words, the delta measure concentrated at state $i$. The case $x^4=0$ signifies coupling which has already been realised at the previous step, and $u=0$ means successful coupling at the transition.  
Note that $\phi_1$ and $\phi_2$ do not depend on the variable $x^3$; we will highlight it by the notation $\phi_i((x^1,x^2,*,x^4),u)$ ($i=1,2$) where $*$ stands for any possible value of $x^3$. Also even if it is written $\phi_3((x^1,x^2,x^3,1),u)$, this value does not depend on $x^3$ either.

In the degenerate cases, if $\kappa(x^1,x^2)=0$ (coupling at the transition is impossible), then instead of (\ref{phi_3}) we set,  for example, 
\begin{align}\label{after0}
\phi_3(x,u):=1(x^4=1)p(x^3,u) + 1(x^4=0)p(x^3,u) = p(x^3,u),
\end{align}
and if $\kappa(x^1,x^2)=1$, then instead of (\ref{phi_12}) we may set 
\begin{align}\label{after}
\phi_1(x,u)=\phi_2(x,u):= p(x^1,u). 
\end{align}
The formula (\ref{phi_4}) which defines \(\phi_4(x,u)\) can be accepted in all cases. \\
Now let us define the process $(\widetilde X^1_n, \widetilde X^2_n)$,  $n\ge 0$ by the formulae
\begin{align}\label{xin}
\widetilde X^1_n:=\eta^1_n 1(\zeta_n=1)+\xi_n 1(\zeta_n=0), \quad 
\widetilde X^2_n:=\eta^2_n 1(\zeta_n=1)+\xi_n 1(\zeta_n=0).
\end{align}

\medskip

Looking at the construction, it may seem that the transition  densities for the components $\tilde X^1_{n+1}$ and $\tilde X^2_{n+1}$, respectively, given $\tilde X^1_{n}$ and $\tilde X^2_{n}$ may depend on both $\tilde X^1_{n}$ and $\tilde X^2_{n}$. This is not so, functionally the first one depends only on $\tilde X^1_{n}$, and the second one, respectively, on $\tilde X^2_{n}$. 
In fact, due to the construction above, we have, in particular, the following densities of the conditional distributions of $(\tilde X^1_{n+1}, \tilde X^2_{n+1})$ given $(\tilde X^1_{n}, \tilde X^2_{n})$ and $(\tilde X^1_{n} \not =\tilde X^2_{n})$ (in which case by definition $(\tilde X^1_{n},\tilde X^2_{n}) = (\eta^1_{n}, \eta^2_{n})$; the sign $*$ stands for any possible value in the range):
\begin{align*}
\frac{\mathbb P(\tilde X^1_{n+1} \in dx^1 |\tilde X^1_{n}, \tilde X^2_{n}, \tilde X^1_{n} \not =\tilde X^2_{n})}{\Lambda_{\tilde X^1_{n}, \tilde X^2_{n}}(dx^1)}
= \frac{\mathbb P(\tilde X^1_{n+1} \in dx^1 |\tilde X^1_{n}=\eta^1_{n}, \tilde X^2_{n}=\eta^2_{n}, \eta^1_{n} \not =\eta^2_{n})}
{\Lambda_{\eta^1_{n}, \eta^2_{n}}(dx^1)}
 \\\\
= (1-\kappa(\eta^1_n,\eta^2_n)) \phi_1((\eta^1_{n},\eta^2_{n},*,1),x^1)
+ \kappa(\eta^1_n,\eta_n^2) \phi_3((\eta^1_n,\eta^2_n,*,1),x^1)
 \\\\
=p(\eta^1_n,x^1)-p(\eta^1_n, x^1)\wedge p(\eta^2_n,x^1) + p(\eta^1_n, x^1)\wedge p(\eta^2_n,x^1) =p(\eta^1_n,x^1) = p(\tilde X^1_n,x^1), 
\end{align*}
due to (\ref{phi_12}), and similarly, given $(\tilde X^1_{n}, \tilde X^2_{n})$ and $(\tilde X^1_{n} \not =\tilde X^2_{n})$, 
\begin{align*}
\frac{\mathbb P(\tilde X^2_{n+1} \in dx^2 |\tilde X^1_{n}, \tilde X^2_{n},\tilde X^1_{n} \not =\tilde X^2_{n})}{\Lambda_{\tilde X^1_{n}, \tilde X^2_{n}}(dx^2)} =  p(\eta^2_n,x^2) =  p(\tilde X^2_n,x^2).
\end{align*}
Also, given 
$(\tilde X^1_{n}, \tilde X^2_{n})$ and $(\tilde X^1_{n} =\tilde X^2_{n})$ we can check that for any $z$ (which stands here both for $x^1$ and $x^2$) according to (\ref{phi_3}) we have, 
\begin{align*}
\frac{\mathbb P(\tilde X^1_{n+1} \in dz |\tilde X^1_{n}, \tilde X^2_{n},\tilde X^1_{n} =\tilde X^2_{n})}{\Lambda_{\tilde X^1_{n}, \tilde X^2_{n}}(dz)} 
= \frac{\mathbb P(\tilde X^2_{n+1} \in dz |\tilde X^1_{n}, \tilde X^2_{n},\tilde X^1_{n} =\tilde X^2_{n})}{\Lambda_{\tilde X^1_{n}, \tilde X^2_{n}}(dz)} =  p(\tilde X^1,z) =  p(\tilde X^2,z).
\end{align*}
Indeed, 
\begin{align*}
\frac{\mathbb P(\tilde X^1_{n+1} \in dx^1 |\tilde X^1_{n}, \tilde X^2_{n}, \tilde X^1_{n} =\tilde X^2_{n})}{\Lambda_{\tilde X^1_{n}, \tilde X^2_{n}}(dx^1)}
= \frac{\mathbb P(\tilde X^1_{n+1} \in dx^1 |\tilde X^1_{n}= \tilde X^2_{n} = \xi_{n})}
{\Lambda_{\eta^1_{n}, \eta^2_{n}}(dx^1)}
 \\\\
= \phi_3((*,*,\xi_n,0),x^1) 
=p(\xi_n,x^1) = p(\tilde X^1_n,x^1). 
\end{align*}
Therefore, we have
\begin{align*}
\frac{\mathbb P(\tilde X^1_{n+1} \in dx^1 |\tilde X^1_{n}, \tilde X^2_{n})}{\Lambda_{\tilde X^1_{n}, \tilde X^2_{n}}(dx^1)} 
= \frac{\mathbb P(\tilde X^1_{n+1} \in dx^1 |\tilde X^1_{n}, \tilde X^2_{n})}{\Lambda_{\tilde X^1_{n}, \tilde X^2_{n}}(dx^1)}(1(\tilde X^1_{n} =\tilde X^2_{n}) + 1(\tilde X^1_{n} \not =\tilde X^2_{n})) 
 \\
= 1(\tilde X^1_{n} =\tilde X^2_{n})\frac{\mathbb P(\tilde X^1_{n+1} \in dx^1 |\tilde X^1_{n}, \tilde X^2_{n}, \tilde X^1_{n} =\tilde X^2_{n})}{\Lambda_{\tilde X^1_{n}, \tilde X^2_{n}}(dx^1)} 
 \\
+ 1(\tilde X^1_{n} \not =\tilde X^2_{n}) \frac{\mathbb P(\tilde X^1_{n+1} \in dx^1 |\tilde X^1_{n}, \tilde X^2_{n}, \tilde X^1_{n} \neq \tilde X^2_{n})}{\Lambda_{\tilde X^1_{n}, \tilde X^2_{n}}(dx^1)}
 \\
= p(\tilde X^1_n,x^1) \left(1(\tilde X^1_{n} =\tilde X^2_{n}) + 1(\tilde X^1_{n} \not =\tilde X^2_{n})\right)  = p(\tilde X^1_n,x^1),
\end{align*}
in all cases.

Due to all of these, each of the components $\tilde X^1_{n}$ and $\tilde X^2_{n}$ are Markov processes with the same generator as $X^1_{n}$ and $X^2_{n}$. (NB. The little calculus above is, of course, not the proof of the Markov property, which just follows from the construction itself;  rather these formulae show how to understand the transition probability kernels of the chosen coupling algorithm.) Moreover, the following lemma holds true. 
\begin{Lemma}\label{lemma:2}
Let the random variables $\widetilde X^1_n$ and $\widetilde X^2_n$ be defined for $n\in Z_+$ by the formulae (\ref{xin}). 
Then 
\begin{equation}\label{l71}
\widetilde X^1_n\stackrel{d}{=}X^1_n, \;\;\widetilde
 X^2_n\stackrel{d}{=}X^2_n, \quad \mbox{for all $n\ge 0$,}
\end{equation}
which implies that the process $\widetilde X^1$ is equivalent to $X^1$, and the process $\widetilde X^2$ is equivalent to $X^2$ in distribution in the space of trajectories; in particular, each of them is a Markov process with the same generator as $X^1$.
Moreover, the couple $\tilde X_n:=\left(\widetilde X^1_n, \widetilde X^2_n\right)$, $n\ge 0$, is also  a  homogeneous Markov process, and 
\begin{equation*}\label{mpequi}
\left(\widetilde X^1_n\right)_{n\ge 0}\stackrel{d}{=}\left(X^1_n\right)_{n\ge 0}, \quad \& \quad 
\left(\widetilde X^2_n\right)_{n\ge 0}\stackrel{d}{=}\left(X^2_n\right)_{n\ge 0}.
\end{equation*}
Moreover, 
\begin{equation}\label{l72}
\widetilde X^1_n=\widetilde X^2_n, \quad \forall \; n\ge
n_0(\omega):=\inf\{k\ge0: \zeta_k=0\}, 
\end{equation}
and
\begin{equation}\label{estimate}
\P_{x^1,\mu}(\widetilde X^1_n\neq \widetilde X^2_n) 
\le \E_{x^1,\mu}\prod_{i=0}^{n-1} (1-\kappa(\tilde X^1_i,\tilde X^2_i))
\le \E_{x^1,\mu}\prod_{i=0}^{n-1} (1-\kappa(\eta^1_i,\eta^2_i)).
\end{equation}
\end{Lemma}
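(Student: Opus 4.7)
The plan is to verify the four claims of the lemma in turn by relying on the coupling construction (\ref{process_eta})--(\ref{xin}) and the one-step conditional density calculations displayed immediately before the lemma's statement. First I would establish the marginal identities $\tilde X^j_n \stackrel{d}{=} X^j_n$ by induction on $n$. The base case is Lemma \ref{odvuh} applied at time $0$ with $q = \kappa(0)$ to the two initial laws $\mu^1_0, \mu^2_0$. For the inductive step the excerpt already verifies that the conditional density of $\tilde X^j_{n+1}$ given $(\tilde X^1_n, \tilde X^2_n)$ equals $p(\tilde X^j_n, \cdot)$, regardless of whether $\tilde X^1_n = \tilde X^2_n$ or $\tilde X^1_n \neq \tilde X^2_n$, and combined with the inductive hypothesis this gives $\tilde X^j_{n+1} \stackrel{d}{=} X^j_{n+1}$. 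The trajectory-level equivalence in (\ref{l71}) then follows because at every step we reproduce the one-step kernel $p(x,\cdot)$ of the original chain.

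Second, for the Markov property of the couple $(\tilde X^1_n, \tilde X^2_n)$, I would note that the auxiliary vector $(\eta^1_n, \eta^2_n, \xi_n, \zeta_n)$ is homogeneous Markov by construction, since the density $\phi$ in (\ref{process_eta}) depends only on the current state. The same conditional density computation shows that the law of $(\tilde X^1_{n+1}, \tilde X^2_{n+1})$ depends on the full auxiliary state only through $(\tilde X^1_n, \tilde X^2_n)$: on $\{\tilde X^1_n \neq \tilde X^2_n\}$ one necessarily has $\zeta_n = 1$ and $(\eta^1_n, \eta^2_n) = (\tilde X^1_n, \tilde X^2_n)$, while on $\{\tilde X^1_n = \tilde X^2_n\}$ both possibilities for $\zeta_n$ yield the same one-step law $p(\tilde X^1_n, \cdot)$ for each coordinate. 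Hence the couple is itself homogeneous Markov.

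Third, for the persistence claim (\ref{l72}), formula (\ref{phi_4}) shows that the value $\zeta_n = 0$ is absorbing, so $\{n : \zeta_n = 0\}$ is an upward-closed tail starting from $n_0$, and on this tail (\ref{xin}) forces $\tilde X^1_n = \xi_n = \tilde X^2_n$. For the estimate (\ref{estimate}), I would combine the inclusion $\{\tilde X^1_n \neq \tilde X^2_n\} \subseteq \{\zeta_n = 1\}$ with the one-step identity $\P(\zeta_{n+1} = 1 \mid \eta^1_n, \eta^2_n, \zeta_n) = 1(\zeta_n = 1)\bigl(1 - \kappa(\eta^1_n, \eta^2_n)\bigr)$ read off from (\ref{phi_4}). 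Iterating this by the tower property, and noting that $(\tilde X^1_i, \tilde X^2_i) = (\eta^1_i, \eta^2_i)$ throughout $\{\zeta_{n-1} = 1\}$ while $\kappa(\tilde X^1_i, \tilde X^2_i) = 1$ as soon as $\tilde X^1_i = \tilde X^2_i$ (so that the left product in (\ref{estimate}) vanishes off $\{\zeta_{n-1} = 1\}$), delivers both inequalities at once.

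The main obstacle I anticipate is the care required in the degenerate cases $\kappa(\eta^1_n, \eta^2_n) \in \{0, 1\}$, where (\ref{phi_12})--(\ref{phi_3}) must be replaced by (\ref{after0})--(\ref{after}) and the splitting of $\tilde X^j_{n+1}$ into an ``$\eta$-branch'' and a ``$\xi$-branch'' partially collapses. One must check that each of the four steps above (marginal matching, the Markov property, absorption, and the telescoping estimate) remains valid under these substitutions, ideally through bookkeeping that treats the generic and the degenerate cases in a unified way.
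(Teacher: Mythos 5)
Your proposal is correct and follows essentially the same route as the paper's own proof (given for the non-homogeneous version in Section 3, which subsumes the homogeneous case): the marginal and Markov claims come from the one-step transition-density identities already verified before the lemma, (\ref{l72}) from the absorbing nature of $\zeta=0$ in (\ref{phi_4}), and (\ref{estimate}) from the inclusion $\{\widetilde X^1_n\neq\widetilde X^2_n\}\subseteq\{\zeta_n=1\}$ combined with iterated conditioning on the filtration. The only cosmetic difference is that you organize the marginal matching as an explicit induction, whereas the paper phrases it via the strong Markov structure of the pair before and after the coupling time.
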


Very informally, the processes $\eta^1_n$ and $\eta^2_n$ represent $X^1_n$ and $X^2_n$, correspondingly,
under the condition that the coupling was not successful until time $n$. The process $\xi_n$ evolves independently of them, representing both  $X^1_n$ and $X^2_n$ simultaneously after the coupling occurs. 
The process $\zeta_n$ represents the moment of coupling: the event  $\zeta_n=0$ is equivalent to the event that coupling occurs no later than at time $n$, while  $\zeta_n=1$ is the complementary event. 
We also note that it is possible that with a positive probability at some moment (stopping time) $X^1_n \not = X^2_n$, but $\kappa(X^1_n,X^2_n) = 1$; in this case coupling occurs on the next step with probability one, the further representation of both  $X^1_n$ and $X^2_n$ is taken over by the process $\xi_n$, as said earlier, and the couple $(\eta^1_n,\eta^2_n)$ further evolves according to (\ref{after}), while the product $\prod_{i=0}^{n}
 (1-\kappa(\eta^1_i,\eta^2_i))$ takes value zero. The couple $(\eta^1_n,\eta^2_n)$ remains a homogeneous Markov process; an alternative construction could have been a transition of $(\eta^1_n,\eta^2_n)$ from the state where $\kappa=1$ to the infinite state $\partial_\infty$ which is absorption.

The non-homogeneous version of this lemma automatically includes the homogeneous scenario and will be proved in the section \ref{sec:nonh}.

\medskip

\subsection{Operators $V$ and $\hat V$ and their spectral radii}\label{sec:Vr}

Now using the coefficient $\kappa(x)$, let us introduce one more key notion of the operator $V$ acting on a (bounded, Borel measurable) function $h$ on the space $S^2 := S\times S$ as follows: for $x=(x^1, x^2)\in S^2$,
\begin{equation}\label{V}
Vh(x) := (1-\kappa(x^1,x^2)) \mathbb E_{x^1,x^2}h(\tilde X_1) \equiv \exp(\psi(x))\mathbb E_{x^1,x^2}h(\tilde X_1),  
\end{equation}
where in the last expression $\psi(x):= \ln (1-\kappa(x^1,x^2))$ (assume $\ln 0 = -\infty$); recall that 
$\tilde X_n = (\tilde X_n^1, \tilde X^2_n)$. Note that on the diagonal $x=(x^1,x^2) : x^1=x^2$ we have 
$$
Vh(x) = (1-\kappa(x^1,x^1))\mathbb E_{x^1,x^2}h(\tilde X_1) = 0, 
$$
since $\kappa(x^1,x^1) = 1$ for any $x^1$. This corresponds well to the idea of coupling: where the two processes $\tilde X^1$ and $\tilde X^2$ become equal, coupling occurs (or it has occurred earlier). Hence, it makes sense to either consider the functions $h$ on $S^2$ vanishing on the diagonal $\text{diag}(S^2)= (x =  (x^1,x^1)\in S^2)$, or, equivalently, to reduce the operator itself on functions defined on 
$$
\hat S^2 := S^2 \setminus \text{diag}(S^2), 
$$
that is, to define for $x=(x^1,x^2)\in \hat S^2$ and for functions $\hat h: \hat S^2 \to \mathbb R$, 
\begin{equation}\label{Vhat}
\hat V\hat h(x):= (1-\kappa(x^1,x^2))\mathbb E_{x^1,x^2}\hat h(\tilde X_1)1(x^1\not =x^2).
\end{equation}

The estimate (\ref{estimate}) can be rewritten via the operator $V$, or, equivalently,  via $\hat V$ as follows: 
\begin{align}\label{l2}
\!\!\P_{x^1,\mu}(\widetilde X^1_n\neq \widetilde X^2_n)\!\le\! 
\int \E_{x^1,x^2} V^n {\bf 1}(x^1,x^2)1(x^1\!\not =\!x^2)\mu(dx^2) 
 \nonumber\\  \\\nonumber
=\int \E_{x^1, x^2}1(\tilde X_0^1\!\not = \!\tilde X_0^2) \hat V^n {\bf 1}(x^1,x^2)1(x^1\!\not =\!x^2)\mu(dx^2). 
\end{align}
Note that by definition (\ref{V}), for the non-negative operator $V$ (which transforms any non-negative function into a non-negative one) its norm $\|V\| = \|V\|_{B,B}:=\sup\limits_{|h|_B\le 1} |Vh|_B $ equals $\sup\limits_{x \in S^2} V{\bf 1}(x)$, where $|h|_B := \max\limits_{x\in S^2} |h(x)|$ (the sup-norm), and ${\bf 1}=(1(x)=1, \, x\in S^2)$. Thus, 
$$
\|V\| = \sup\limits_{x\in S^2} V1(x) = \sup_{x\in S^2} (1-\kappa(x)) = 1-\kappa.
$$ 

\noindent
Now the well-known inequality (see, for example, \cite[\S 8]{KLS}) reads, 
\begin{equation*}\label{rkappa}
r(V) \le \|V\| \; = (1-\kappa). 
\end{equation*}
The same holds true for the operator $\hat V$ (here the function ${\bf 1}(x)\equiv 1$ is defined on $\hat S^2$): 
\begin{equation*}\label{rkappa}
r(\hat V) \le \|\hat V\| = \sup\limits_{x\in \hat S^2} \hat V{\bf 1}(x) = \sup_{x\in \hat S^2} (1-\kappa(x)) = 1-\kappa. 
\end{equation*}

Further, if $V$ (or  $\hat V$) were compact 
and irreducible 
(see, e.g., \cite{KLS}) then from the generalisation of the Perron--Frobenius Theorem (see, for example, \cite[\S 9, Theorem 9.2]{KLS}) it would follow (see, e.g., \cite[(7.4.10)]{FW}), 
$$
\lim_{n \to\infty} \frac1n \, \ln V^n {\bf 1}(x)  = \ln r(V) = \ln r(\hat V) = \lim_{n \to\infty} \frac1n \, \ln \hat V^n \hat{\bf 1}(x) 
$$
(where the first unit function $\bf 1$ is defined on $S^2$, while the second one $\hat{\bf 1}(x)$ on $\hat S^2$). \\
Similarly to the above, an operator for $m$ steps may be defined on functions $h: S^2 \to \mathbb R$:
$$
V^{(m)} h(x) := (1-\kappa^{(m)}(x^1,x^2)) \mathbb E_{x^1,x^2}h(\tilde X_m) \equiv \exp(\psi_m(x))\mathbb E_{x^1,x^2}h(\tilde X_m),  
$$
and likewise 
on $h: \hat S^2 \to \mathbb R$
\begin{align*}
\hat V^{(m)} h(x) := (1-\kappa^{(m)}(x^1,x^2)) \mathbb E_{x^1,x^2}h(\tilde X_m)1(\tilde X_m^1 \neq \tilde X_m^2) 
 \\\\
\equiv \exp(\psi_m(x))\mathbb E_{x^1,x^2}h(\tilde X_m)1(\tilde X_m^1 \neq \tilde X_m^2).  
\end{align*}
Note that the operator $V^n$ ($n\ge 1$) maps the values of any function at the diagonal of $S^2$ to zero: we have for $x=(x^1,x^1)$ 
$$
V h(x) = (1-\kappa(x^1,x^1))\mathbb E_x h(\tilde X_1) = 0, 
$$
since $1-\kappa(x^1,x^1) =0$. Also, the operator  $V$ is positive, that is, $Vh(x)\ge 0$ for any function $h\ge 0$. Therefore (here ${\bf 1}(x)$ is the function identically equal to one on $S^2$, and $\hat {\bf 1}(x)$  is the function identically equal to one on $\hat S^2$),  
$$
\|V^n\| = \sup_{x\in S^2} V^n  {\bf 1}(x) = \sup_{x\in S^2}(1-\kappa^{(n)}(x)) = \sup_{\hat x\in \hat S^2}(1-\hat \kappa^{(n)}(\hat x)) = \sup_{\hat x\in \hat S^2} \hat V^n\hat  {\bf 1}(\hat x) = \|\hat V^n\|. 
$$
Hence we have the equality 
$r(V) = r(\hat V)$. 
It will be easier to argue with the operator $V$ in the sequel; so, we will mainly continue with  this operator. However, from the computational point of view -- that is, to compute the spectral radius -- the operator $\hat V$ is preferred because of some reduction in dimension. Recall  that 
$\hat V$ is the reduction of $V$ on $\hat S^2$, and that 
$$
\hat V^n {\bf 1}(x) = V^n {\bf 1}(x), \quad x\in \hat S^2,
$$
and 
$$
\ln r(V) = \ln r(\hat V) \le \|\hat V\| = \|V\|.
$$

Note that even  without the assumption of compactness on $V$ we still have the inequalities, 
\[
0\le \lim_{n \to\infty} \frac1n \, \ln V^n {\bf 1}(x) \le \lim_{n \to\infty} \frac1n \, \ln \|V^n\| = \ln r(V) \le \|V\|.
\]
So, from the Gelfand formula, 
\begin{equation}\label{l11}
\limsup_n (V^n {\bf 1} (x))^{1/n} \le \lim_n \|V^n\|^{1/n} = r(V). 
\end{equation}
The assertions (\ref{l2}) and (\ref{l11}) together lead to the following result. 
\begin{Theorem}\label{lastthm2}
In all cases, for any $x^1 \in S$,
\begin{equation}\label{newrate12}
\limsup\limits_{n\to\infty} \frac1n \ln \| \mathbb P_{x^1}(n,\cdot) - \mu(\cdot)\|_{TV} 
\!\le\! \limsup_{n\to\infty} \frac1n  \ln\! \int \!\!2V^n {\bf 1}(x^1, x^2)\mu(dx^2)
\!\le\! \ln r(V).
\end{equation}
\end{Theorem}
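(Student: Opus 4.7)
The plan is to combine the coupling bound from Lemma \ref{lemma:2} with the stationarity of $\mu$ and then invoke Gelfand's formula, which appears already as (\ref{l11}), to translate the resulting estimate into a bound involving the spectral radius $r(V)$.

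For the left-hand inequality, I would apply the coupling construction of Subsection \ref{Sec22} to the two initial conditions $X^1_0 = x^1$ (deterministic) and $X^2_0 \sim \mu$. By Lemma \ref{lemma:2} each marginal is preserved in distribution, so $\tilde X^1_n \sim \mathbb P_{x^1}(n,\cdot)$, while stationarity of $\mu$ gives $\tilde X^2_n \sim \mu$. The classical coupling inequality, consistent with the convention used in (\ref{finiteMD}) for the total variation norm, then yields
\[
\|\mathbb P_{x^1}(n,\cdot) - \mu(\cdot)\|_{TV} \le 2\,\mathbb P_{x^1,\mu}(\tilde X^1_n \neq \tilde X^2_n).
\]
Conditioning on $X^2_0 = x^2$ and applying the operator form of the coupling estimate already established in (\ref{l2}) gives
\[
\mathbb P_{x^1,\mu}(\tilde X^1_n \neq \tilde X^2_n) = \int \mathbb P_{x^1,x^2}(\tilde X^1_n \neq \tilde X^2_n)\,\mu(dx^2) \le \int V^n \mathbf{1}(x^1,x^2)\,\mu(dx^2).
\]
Taking $\tfrac{1}{n}\ln\limsup_{n\to\infty}$ on both sides delivers the first inequality.

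For the right-hand inequality, the positivity of $V$ together with the identity $\|V^n\| = \sup_{x\in S^2} V^n\mathbf{1}(x)$ established just before the theorem gives
\[
\int 2V^n \mathbf{1}(x^1,x^2)\,\mu(dx^2) \le 2\,\|V^n\|.
\]
Taking logarithms, dividing by $n$, and letting $n\to\infty$, Gelfand's formula $\|V^n\|^{1/n} \to r(V)$ cited in (\ref{l11}) absorbs the multiplicative factor $2$ in the limit and yields the claimed bound $\le \ln r(V)$.

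The main (mild) subtlety is the insertion of stationarity: without running the coupling with the random initial condition $X^2_0 \sim \mu$ for the second component, one would only obtain a pointwise bound on $\|\mathbb P_{x^1}(n,\cdot) - \mathbb P_{x^2}(n,\cdot)\|_{TV}$ and then need to integrate \emph{after} estimating the coupling probability. Handling it in the above order is what produces the clean integral on the right-hand side of (\ref{newrate12}). Notably the argument does \emph{not} require compactness or irreducibility of $V$; these would only be needed if one wished to replace the operator norm $\|V^n\|^{1/n}$ by the pointwise limit $(V^n \mathbf{1}(x))^{1/n}$, which is bypassed here by passing through the (possibly coarser) operator norm.
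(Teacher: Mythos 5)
Your proposal is correct and follows essentially the same route as the paper: both apply the coupling inequality with the second component started from the stationary measure $\mu$, pass to the integral form via the operator estimate (\ref{l2}), and conclude with Gelfand's formula (\ref{l11}) to replace $\|V^n\|^{1/n}$ by $r(V)$, the factor $2$ disappearing in the $\frac1n\ln$ limit. The only difference is that you spell out the final step $\int 2V^n\mathbf{1}\,d\mu \le 2\|V^n\|$ explicitly, which the paper leaves implicit.
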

\noindent
{\em Proof.} We have, due to (\ref{l2}),

\begin{align*}
\limsup\limits_{n\to\infty} \frac1n \ln \| P_{x^1,\mu}(n,\cdot) - \mu(\cdot)\|_{TV} 
\le \limsup\limits_{n\to\infty} \frac1n \ln (2\P_{x^1,\mu}(\widetilde X^1_n\neq \widetilde X^2_n))
 \\\\
= \limsup\limits_{n\to\infty} \frac1n \ln (2 \int \P_{x^1,x^2}(\widetilde X^1_n\neq \widetilde X^2_n)\mu(dx^2))
  \\\\
\le  \limsup_n \frac1n \, \ln \int 2V^n {\bf 1}(x^1, x^2) 1(x^2\not = x^1)\mu(dx^2)
\le \ln r(V).  
\end{align*}
\begin{Corollary}\label{corr1}
Under the assumption
\begin{equation}\label{r12}
r(V)<1, 
\end{equation}
the rate of convergence in
\[
\|\mu_n - \mu\|_{TV} \to 0, \quad n\to\infty
\]
is exponential: 
for any $\epsilon>0$ and $n$ large enough ($n\ge N(x^1)$), 
\begin{equation}\label{newrate2}
\|P_x(n,\cdot) - \mu(\cdot)\|_{TV} \le (r(V)+ \epsilon)^n.  
\end{equation}
\end{Corollary}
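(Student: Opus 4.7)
The plan is to obtain the corollary as an essentially immediate consequence of Theorem \ref{lastthm2}, after first confirming that the invariant measure $\mu$ actually exists under the relaxed hypothesis $r(V) < 1$ (the theorem statement already invokes $\mu$, but the hypothesis $r(V) < 1$ is weaker than the MD-condition $\kappa > 0$, so existence is no longer obtained from the classical uniform bound $(1-\kappa)^n$).

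First I would establish existence and uniqueness of $\mu$. The coupling bound (\ref{l2}) combined with Gelfand's formula (\ref{l11}) imply that for any two starting points $x^1, x^2 \in S$ and any $\varepsilon > 0$ there is a constant $C(x^1,x^2)$ such that
$$
\|\P_{x^1}(n,\cdot) - \P_{x^2}(n,\cdot)\|_{TV} \;\le\; C(x^1,x^2)\,(r(V) + \varepsilon)^n \;\longrightarrow\; 0.
$$
In particular $\{\P_{x^1}(n,\cdot)\}_{n}$ is Cauchy in total variation, so it converges to a limit $\mu$ which, by the same inequality, does not depend on $x^1$. Invariance of $\mu$ then follows by passing to the limit in the Chapman--Kolmogorov identity $\P_x(n+1,\cdot) = \int \P_y(1,\cdot)\,\P_x(n,dy)$ together with the total-variation contraction property of the transition operator.

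The quantitative bound is then obtained by unfolding the definition of $\limsup$ in (\ref{newrate12}). Assuming $r(V) > 0$, fix $\varepsilon > 0$ and set $\delta := \ln(r(V) + \varepsilon) - \ln r(V) > 0$; Theorem \ref{lastthm2} furnishes $N(x^1)$ such that for all $n \ge N(x^1)$,
$$
\tfrac{1}{n}\,\ln \|\P_{x^1}(n,\cdot) - \mu(\cdot)\|_{TV} \;\le\; \ln r(V) + \delta \;=\; \ln(r(V) + \varepsilon),
$$
and exponentiating yields (\ref{newrate2}); the degenerate case $r(V)=0$ is handled identically with $\delta$ any positive number. The main (and essentially only) technical point is the existence step above: the classical MD route to $\mu$ via the uniform bound $(1-\kappa)^n$ is unavailable here, and one must rely instead on the pointwise-in-$(x^1,x^2)$ Cauchy argument driven by $r(V) < 1$; once that is in place, the rest of the corollary is a direct translation of the $\limsup$ estimate into the standard $\varepsilon$-form.
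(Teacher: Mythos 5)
Your proposal is correct, and the quantitative part is exactly what the paper does: the paper gives no separate proof of the corollary, treating it as the immediate $\varepsilon$-unfolding of the $\limsup$ estimate in Theorem \ref{lastthm2} (the only comment offered is that the factor $2$ is absorbed into the $+\epsilon$). Your extra step --- establishing existence and uniqueness of $\mu$ under $r(V)<1$ via the uniform bound $\sup_{x^1,x^2}\|\P_{x^1}(n,\cdot)-\P_{x^2}(n,\cdot)\|_{TV}\le 2\|V^n\|$, the Chapman--Kolmogorov identity to get the Cauchy property in total variation, and passage to the limit for invariance --- is a genuine supplement: the paper only proves existence of $\mu$ under the stronger MD-condition $\kappa>0$ (Proposition \ref{thm_erg21}) and tacitly reuses $\mu$ in Theorem \ref{lastthm2} and its corollary. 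Filling that gap is appropriate and your argument for it is sound; the minor imprecision in the degenerate case $r(V)=0$ (where one should take $M=-\ln\epsilon$ in $\frac1n\ln\|\cdot\|\le -M$ rather than ``any positive $\delta$'') is cosmetic and does not affect the conclusion.
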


\noindent
Here one may expect the multiplier $2$ in the right hand side, but it is dropped due to $+\epsilon$. 
To put it a little differently, if $r(V)<\|V\| = 1-\kappa$ and $0 < \epsilon < 1-\kappa - r(V)$, then the bound (\ref{newrate2}) is strictly better than (\ref{exp_bd32}) for $n$ large enough. Everywhere $r(\hat V)$ can be used equivalently.

\begin{Remark}
Let us emphasize that on the one hand the bound (\ref{newrate12}) in the Theorem \ref{lastthm2} is asymptotic, for large $n$, unlike the strict bounds in the classical Ergodic Theorem and in the Diaconis--Stroock bound for reversible MC (see  (\ref{dse3}) below). On the other hand, the estimates  
$$
\frac1n \ln \| P_{x^1}(n,\cdot) - \mu(\cdot)\|_{TV} 
\le \frac1n \ln (2\P_{x^1,\mu}(\widetilde X^1_n\neq \widetilde X^2_n))
\le  \frac1n \, \ln \int 2V^n{\bf 1}(x^1, x^2) 1(x^1\neq  x^2)\mu(dx^2)
$$ 
are valid for each $n\ge 1$ and $x^1\in S$.
\end{Remark}

\begin{Remark}
The condition (\ref{r12}) offers one possible (partial) answer to the question whether there is any intermediate situation in ``between'' Markov--Dobrushin's and Doeblin--Doob's with a bound like Doeblin--Doob's (see \cite{Doob53}): in the last case the following bound holds true
\begin{equation}\label{DD}
 \sup_{x}\sup_{A\subset {\cal S}} |\P_x(n,A) - \mu(A)| \le C\exp(-cn), \quad n\ge 0, 
\end{equation}
with some $C,c>0$, under the ``DD-condition'' which assumes that there exist a finite (sigma-additive) measure $\nu\ge 0$ and $\epsilon>0$, $s>0$ such that $\nu(A)\le \epsilon$ implies 
\[
\sup_x \P_x(s, A) \le 1 - \epsilon. 
\]
Some issue with the bound (\ref{DD}) is that the constants $C,c$ are neither determined by the measure $\nu$ and the constant $\epsilon$, nor any bounds for these constants exist in terms of $\nu$ and $\epsilon$. 

Beside the examples in the last section, note that if the MD condition $\kappa>0$ fails, it means $\kappa=0$, which just signifies that for at least one couple of states $x$ and $x'$ the kernels $Q_x(dy)$ and $Q_{x'}(dy)$ are singular, but it does not necessarily mean $r(V)=1$ since the process still may well be irreducible. So, indeed, the inequality $r(V)<1$ provides a condition, which is more relaxed than MD and yet the one which allows an effective bound for the rate of convergence. 

\end{Remark}

\begin{Theorem}\label{lastthmm2m}
In all cases, 
\begin{align*}\label{}
\limsup\limits_{n\to\infty} \frac1n \ln \| P_x(n,\cdot) - \mu(\cdot)\|_{TV} 
 \\\\
\le \limsup_n \frac1n \, \ln \int 2(V^{(m)})^{[n/m]} {\bf 1}(x) 1(x^1\not = x^2)\mu(dx^2)
\le \ln r(V^{(m)})^{1/m}.  
\end{align*}
\end{Theorem}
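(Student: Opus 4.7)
The plan is to reduce Theorem \ref{lastthmm2m} to Theorem \ref{lastthm2} applied to the $m$-step subsampled chain $Y_k := X_{km}$, and then to use contractivity of the Markov semigroup in total variation to handle indices $n$ not divisible by $m$.

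First, the process $(Y_k)_{k\ge 0}$ is itself a homogeneous Markov chain with one-step transition kernel $\P_x(m,\cdot)$, one-step Markov--Dobrushin coefficient $\kappa^{(m)}$, and associated coupling operator equal to $V^{(m)}$ in the sense of (\ref{V}). Since $\mu P = \mu$ implies $\mu P^m = \mu$, the stationary measure $\mu$ is invariant for $Y$ as well. Applying the coupling construction of Section \ref{Sec22} verbatim to $(Y_k)$ started from $\delta_{x^1}$ and $\mu$ produces a coupled pair $(\tilde Y^1_k, \tilde Y^2_k)_{k\ge 0}$ with correct marginals and satisfying the analogue of (\ref{l2}),
\[
\P_{x^1,\mu}(\tilde Y^1_k \neq \tilde Y^2_k) \le \int \E_{x^1,x^2}(V^{(m)})^k {\bf 1}(x^1,x^2)\, 1(x^1\neq x^2)\, \mu(dx^2),
\]
so by the standard coupling bound
\[
\|\P_{x^1}(km,\cdot) - \mu(\cdot)\|_{TV} \le 2 \int \E_{x^1,x^2}(V^{(m)})^k {\bf 1}(x^1,x^2)\, 1(x^1\neq x^2)\, \mu(dx^2).
\]

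Second, for arbitrary $n$ set $k := [n/m]$, so $km \le n$. Using $\mu P^{n-km} = \mu$ together with the well-known total-variation contractivity of Markov kernels on signed measures of zero total mass, one has
\[
\|\P_{x^1}(n,\cdot) - \mu(\cdot)\|_{TV} = \bigl\|(\P_{x^1}(km,\cdot) - \mu(\cdot)) P^{n-km}\bigr\|_{TV} \le \|\P_{x^1}(km,\cdot) - \mu(\cdot)\|_{TV},
\]
and chaining with the previous display yields the first inequality of the theorem. For the second inequality, the integral is bounded by $\|(V^{(m)})^{[n/m]}\|$ since ${\bf 1}$ is bounded by one and $\mu$ is a probability measure. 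Taking $\frac{1}{n}\ln$ and passing to $\limsup_n$, Gelfand's formula in the Banach space of bounded Borel functions on $S^2$, as already used in (\ref{l11}), gives $\|(V^{(m)})^k\|^{1/k} \to r(V^{(m)})$. Combined with $[n/m]/n \to 1/m$, this yields
\[
\limsup_n \frac{1}{n}\ln \int 2(V^{(m)})^{[n/m]}{\bf 1}(x^1,x^2)\, 1(x^1\neq x^2)\, \mu(dx^2) \le \frac{1}{m}\ln r(V^{(m)}) = \ln r(V^{(m)})^{1/m}.
\]

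The main obstacle I anticipate is the bookkeeping required to verify that the coupling apparatus of Section \ref{Sec22} and the argument of Theorem \ref{lastthm2} transfer cleanly to the subsampled chain. Since this apparatus depends only abstractly on having a Markov kernel and an associated coupling coefficient, the transfer is essentially a mechanical substitution $(\P_x, \kappa, V) \leftrightarrow (\P_x(m,\cdot), \kappa^{(m)}, V^{(m)})$; the only non-automatic ingredient is the semigroup contractivity in total variation used to pass from time $[n/m]m$ back to time $n$, which is standard but must be invoked explicitly.
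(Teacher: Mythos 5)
Your proof is correct and is essentially the intended argument: the paper states Theorem \ref{lastthmm2m} without a separate proof, as the $m$-step analogue of Theorem \ref{lastthm2}, and the natural way to realise that analogue is exactly your route — run the coupling construction of Section \ref{Sec22} on the subsampled chain with kernel $\P_x(m,\cdot)$ and coefficient $\kappa^{(m)}$, pass from time $[n/m]m$ to time $n$ by total-variation monotonicity of Markov kernels (consistent with the exponent $[n/m]$ already appearing in (\ref{exp_bd42})), and finish with Gelfand's formula. The only point worth being explicit about is that the coupling operator of the subsampled chain is the operator the paper denotes $V^{(m)}$ (the prefactor $1-\kappa^{(m)}(x^1,x^2)$ times the expectation over the coupled pair after one $m$-step transition); the paper itself identifies this with $V^m$ in the remark following the theorem, and your proof is consistent with that reading.
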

\begin{Remark}
Also, 
$$
r(V^{(m)})^{1/m} = r(V), 
$$
because $V^{(m)} = V^m$, and, hence, 
$$
\lim_{n\to\infty} \|(V^{(m)})^n\|^{1/n} = \lim_{n\to\infty} \|V^{nm}\|^{1/n}
= r(V)^{m}. 
$$
\end{Remark}

\section{Non-homogeneous case}\label{sec:nonh}
Here certain non-homogeneous cases are discussed where analogues of the approach developed in the previous section can be applied. In general this does not seem possible. However, what is possible is to assume that in all  non-homogeneous transition kernels $\P_{t,x}(t+1,x')$ there is a nontrivial homogeneous core sub-kernel, to which it might be possible to apply the  approach on the base of the previous section; this will be realised under the assumptions (\ref{ppt3}) -- (\ref{ppt5}) below. Also, a periodic dependence may hold true for the kernels, see (\ref{T}) below; of course, this case may also be treated as a $T$-dependent homogeneous Markov chain.
Note that the notion of the ``joint spectral radius''  in the present situation looks  regretfully  useless (see \cite{Koz}).

\subsection{Auxiliaries}

Let us now consider a non-homogeneous Markov process (MP) in discrete time $(X_n, \, n\ge 0)$ on a general state space $S$ with a topology and with a Borel sigma-algebra.

In the well-known inequality (\ref{exp_bd3}) of the Proposition \ref{thm_erg2}  stated below for the reader's convenience (without proof: see \cite[Section 4]{Kolm}), which extends the bound (\ref{finiteMD}), Markov--Dobrushin's coefficients depending on time are used, 
\begin{equation}\label{MDt}
 \kappa_t := \inf_{x,x'} \int \left(\frac{\P_{t,x'}(t+1,dy)}{\P_{t,x}(t+1,dy)}\wedge 1 \right)\P_{t,x}(t+1,dy). 
\end{equation}
Similar coefficient and condition can be introduced for any number $m\ge 1$ of steps:
$$
 \kappa_t^{(m)} := \inf_{x,x'} \int \left(\frac{\P_{t,x'}(t+m,dy)}{\P_{t,x}(t+m,dy)}\wedge 1 \right)\P_{t,x}(t+m,dy). 
$$
Also denote
$$
 \kappa_t (x,x') = \int \left(\frac{\P_{t,x'}(t+1,dy)}{\P_{t,x}(t+1,dy)}\wedge 1 \right)\P_{t,x}(t+1,dy),  
$$
and 
$$
 \kappa_t^{(m)}(x,x') :=  \int \left(\frac{\P_{t,x'}(t+m,dy)}{\P_{t,x}(t+m,dy)}\wedge 1 \right)\P_{t,x}(t+m,dy). 
$$
Note that here, as for the homogeneous case, $\displaystyle \P_{t,x'}(t+1,dy)/\P_{t,x}(t+1,dy)$ is understood in the sense of the density of the absolutely continuous component of the numerator with respect to the denominator measure. Note that the probability $\P_{t,x}(t+1,A)$ for any Borel measurable $A$ is, in turn, Borel measurable with respect to $x$, which is a standard requirement in Markov processes \cite{Dynkin}. Due to linearity such measurability with respect to the pair $x,x'$ will be also valid for the measure $\Lambda_{t,x,x'}$ defined below.

~

For any two or three fixed states $x^1, x^2, x^3$ and for any $t\ge 0$ denote 
\begin{equation*}
\Lambda_{t,x^1,x^2}(dz) := (\P_{t,x^1}(t+1,dz) + \P_{t,x^2}(t+1,dz))/2,
\end{equation*} 
\begin{equation*}
\Lambda_{t,x^1,x^2, x^3}(dz) := (\P_{t,x^1}(t+1,dz) + \P_{t,x^2}(t+1,dz) + \P_{t,x^3}(t+1,dz))/3.
\end{equation*} 
Likewise, for any $m\ge 1$, let 
\begin{equation*}
\Lambda^{(m)}_{t,x^1,x^2}(dz) := (\P_{t,x^1}(t+m,dz) + \P_{t,x^2}(t+m,dz))/2,
\end{equation*} 
\begin{equation*}
\Lambda^{(m)}_{t,x^1,x^2, x^3}(dz) := (\P_{t,x^1}(t+m,dz) + \P_{t,x^2}(t+m,dz) + \P_{t,x^3}(t+m,dz))/3.
\end{equation*} 
\noindent
Note that $\Lambda_{t,x^1,x^2}(dz) = \Lambda_{t,x^1,x^2}(dz)$, and similarly $\Lambda_{t,x^1,x^2,x^3}$ as well as $\Lambda^{(m)}_{t,x^1,x^2,x^3}$ do not depend on the permutation of the variables $(x^1,x^2,x^3)$.
\begin{Lemma}\label{newMDt}
The following representation for the condition (\ref{MDt}) holds true, 
\begin{equation}\label{betterMD}
 \kappa_t = \inf_{x,x'} \int \left(\frac{\P_{t,x'}(t+1,dy)}{\Lambda_{t,x,x'}(dy)}\wedge \frac{\P_{t,x}(t+1,dy)}{\Lambda_{t,x,x'}(dy)} \right)\Lambda_{t,x,x'} (dy). 
\end{equation}
In particular -- since $\Lambda_{t,x,x'}(dz) = \Lambda_{t,x',x}(dz)$ -- 
for any $x,x'\in S$,
\[
\kappa_t(x,x') = \kappa_t(x',x).
\]
\end{Lemma}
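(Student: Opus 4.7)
The plan is to mirror the argument of Lemma \ref{newMD2} by passing to Radon--Nikodym densities with respect to the symmetric dominating measure $\Lambda_{t,x,x'}$, after which the representation (\ref{betterMD}) and the symmetry $\kappa_t(x,x')=\kappa_t(x',x)$ become essentially immediate.

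First I would introduce $p_x(y) := \P_{t,x}(t+1,dy)/\Lambda_{t,x,x'}(dy)$ and $p_{x'}(y) := \P_{t,x'}(t+1,dy)/\Lambda_{t,x,x'}(dy)$; both densities exist and are bounded by $2$, since $\Lambda_{t,x,x'}$ is by construction the average of the two transition measures. The density $\P_{t,x'}(t+1,dy)/\P_{t,x}(t+1,dy)$ appearing in (\ref{MDt}), understood in the absolutely continuous component sense, coincides with $p_{x'}/p_x$ on the set $\{p_x>0\}$; on $\{p_x=0\}$ the integrand of (\ref{MDt}) does not contribute because it is integrated against $\P_{t,x}(t+1,dy)$ which assigns zero mass there. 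This lets me rewrite
\[
\int\left(\frac{\P_{t,x'}(t+1,dy)}{\P_{t,x}(t+1,dy)}\wedge 1\right)\P_{t,x}(t+1,dy)
= \int_{\{p_x>0\}}\left(\frac{p_{x'}(y)}{p_x(y)}\wedge 1\right) p_x(y)\,\Lambda_{t,x,x'}(dy).
\]
Pulling the factor $p_x(y)>0$ through the minimum produces $(p_{x'}(y)\wedge p_x(y))\,1(p_x(y)>0)$, and since $p_{x'}\wedge p_x$ already vanishes on $\{p_x=0\}$, the indicator can be dropped, giving $\int (p_{x'}\wedge p_x)\,d\Lambda_{t,x,x'}$, which is precisely the integrand on the right-hand side of (\ref{betterMD}) when the densities are written back as $\P_{t,\cdot}(t+1,dy)/\Lambda_{t,x,x'}(dy)$. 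Taking $\inf_{x,x'}$ on both sides finishes the representation.

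The symmetry assertion then drops out essentially for free: $\Lambda_{t,x,x'}=\Lambda_{t,x',x}$ by construction, and $\wedge$ is symmetric in its arguments, so the quantity $\int (p_{x'}\wedge p_x)\,d\Lambda_{t,x,x'}$ is invariant under the swap $x\leftrightarrow x'$; hence $\kappa_t(x,x')=\kappa_t(x',x)$, as claimed.

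I do not anticipate a serious obstacle. The only delicate point is the bookkeeping on the $\P_{t,x}$-null set $\{p_x=0\}$, and this is already handled by the convention that the density in (\ref{MDt}) denotes the absolutely continuous component. Because the time index $t$ enters only as a label on the transition kernels, the proof is essentially identical to that of the homogeneous Lemma \ref{newMD2}; alternatively, one may invoke the reference-measure invariance from the Remark following that lemma to obtain (\ref{betterMD}) almost for free, the new input being only the symmetric choice of $\Lambda_{t,x,x'}$ which is what makes the symmetry in $(x,x')$ manifest.
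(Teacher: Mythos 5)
Your proposal is correct and follows essentially the same route as the paper's proof: the paper also introduces the density $f_{x,x'}(y)=\P_{t,x}(t+1,dy)/\Lambda_{t,x,x'}(dy)$, rewrites the integral in (\ref{MDt}) against $\Lambda_{t,x,x'}$, pulls the factor through the minimum to obtain (\ref{betterMD}), and reads off the symmetry from the symmetry of $\Lambda_{t,x,x'}$ and of $\wedge$. Your extra care on the set where the density vanishes is a minor refinement the paper glosses over, not a difference in method.
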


\noindent
{\em Proof. } 
Let $\displaystyle f_{x,x'}(y) = \frac{\P_{t,x}(t+1,dy)}{\Lambda_{t,x,x'} (dy)}(y)$. Then, 
\begin{eqnarray*}
&\displaystyle  \kappa_t = \inf_{x,x'} \int \left(\frac{\P_{t,x'}(t+1,dy)}{\P_{t,x}(t+1,dy)}\wedge \frac{\P_{t,x}(t+1,dy)}{\P_{t,x}(t+1,dy)} \right)\P_x(dy) 
 \\\\
&\displaystyle  = \inf_{x,x'} \int \left(\frac{\P_{x'}(dy)}{f_{x,x'}(y)\Lambda_{x,x'}(dy)}\wedge \frac{\P_{x}(dy)}{f_{x,x'}(y)\Lambda_{x,x'}(dy)} \right)f_{x,x'}(y)\Lambda_{x,x'} (dy) 
 \\\\
&\displaystyle = \inf_{x,x'} \int \left(\frac{\P_{x'}(dy)}{\Lambda_{x,x'}(dy)}\wedge \frac{\P_{x}(dy)}{\Lambda_{x,x'}(dy)} \right)\Lambda_{x,x'} (dy), 
\end{eqnarray*}
as required. The Lemma \ref{newMDt} is proved. 

The same is valid for $\kappa^{(m)}_t$ for any $m$ where
$$
 \kappa^{(m)}_t = \inf_{x,x'} \int \left(\frac{\P_{t,x'}(t+m,dy)}{\Lambda^{(m)}_{t,x,x'}(dy)}\wedge \frac{\P_{t,x}(t+m,dy)}{\Lambda^{(m)}_{t,x,x'}(dy)} \right)\Lambda^{(m)}_{t,x,x'} (dy).
$$

\begin{Remark}
Actually, the right hand side in (\ref{betterMD}),  does not depend on any particular reference measure $\Lambda_{t,x,x'}$ (even if it is not symmetric with respect to $x,x'$). Indeed, it follows straightforwardly from the fact that if $d\Lambda_{t,x,x'} <\!\!< d\tilde\Lambda_{t,x,x'}$ and $d\Lambda_{t,x,x'} = \phi_{t,x,x'} d\tilde\Lambda_{t,x,x'}$, then we get, 
\begin{eqnarray*}
&\displaystyle  \int \left(\frac{\P_{t,x'}(t+1,dy)}{\Lambda_{t,x,x'}(dy)}\wedge \frac{\P_{t,x}(t+1,dy)}{\Lambda_{t,x,x'}(dy)} \right)\Lambda_{t,x,x'} (dy) 
 \\\\
&\displaystyle  = \int \left(\frac{\P_{t,x'}(t+1,dy)}{\tilde\Lambda_{t,x,x'}(dy)}\wedge \frac{\P_{t,x}(t+1,dy)}{\tilde\Lambda_{t,x,x'}(t+1,dy)} \right)1(\phi_{t,x,x'}(y)>0)\tilde\Lambda_{t,x,x'} (dy).
\end{eqnarray*}
However, $\P_{t,x'}(t+1,dy)<\!\!< \Lambda_{t,x,x'}(t+1,dy) = \phi_{t,x,x'}(y) \tilde \Lambda_{t,x,x'}(t+1,dy)$, so for any measurable $A$ we have $\int_A \P_{t,x'}(t+1,dy) 1(\phi_{t,x,x'}(y)=0) = 0$ and the same for $\P_{t,x}(t+1,dy)$, which means that, in fact, 
\begin{align*}
\int \left(\frac{\P_{t,x'}(t+1,dy)}{\tilde\Lambda_{t,x,x'}(dy)}\wedge \frac{\P_{t,x}(t+1,dy)}{\tilde\Lambda_{t,x,x'}(t+1,dy)} \right)1(\phi_{t,x,x'}(y)>0)\tilde\Lambda_{t,x,x'} (dy) 
 \\\\
= \int \left(\frac{\P_{t,x'}(t+1,dy)}{\tilde\Lambda_{t,x,x'}(dy)}\wedge \frac{\P_{t,x}(t+1,dy)}{\tilde\Lambda_{t,x,x'}(dy)} \right)\tilde\Lambda_{t,x,x'} (dy).
\end{align*}
Respectively, if there are two reference measures $\Lambda_{t,x,x'}$ and, say, $\Lambda'_{t,x,x'}$, then we may take $\tilde\Lambda_{t,x,x'} = \Lambda_{t,x,x'} + \Lambda'_{t,x,x'}$, and the coefficients computed by using each of the two -- $\Lambda_{t,x,x'}$ and $\Lambda'_{t,x,x'}$ -- will be represented via $\tilde\Lambda_{x,x'}$ in the same way. 
\end{Remark}

~

Here is the key notion in the following presentation: denote 
\[
\kappa_t(x,x') : = \int \left(\frac{\P_{t,x'}(t+1,dy)}{\P_{t,x}(t+1,dy)}\wedge 1 \right)\P_{t,x}(t+1,dy).
\]
Also, let 
\[
\kappa_t^{(m)}(x,x') : = \int \left(\frac{\P_{t,x'}(t+m,dy)}{\P_{t,x}(t+m,dy)}\wedge 1 \right)\P_{t,x}(t+m,dy).
\]
Clearly, for any $x,x'\in S$, 
\begin{equation*}
\kappa_t(x,x') \ge \kappa_t. 
\end{equation*}

\begin{Lemma}\label{jj}
For any $x,x'\in S$, and for any $m\ge 1$
\[
\kappa^{(m)}_t(x,x') = \kappa^{(m)}_t(x',x).
\]
\end{Lemma}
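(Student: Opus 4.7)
The plan is to reduce the symmetry claim to the representation established in Lemma \ref{newMDt}. The proof of that lemma proceeds by a purely algebraic change of reference measure that works pointwise in the pair $(x,x')$ and not merely under the infimum; the same argument, applied with $m$-step kernels and the reference measure $\Lambda^{(m)}_{t,x,x'}$ in place of $\Lambda_{t,x,x'}$, yields the identity
\[
\kappa_t^{(m)}(x,x') \;=\; \int \left(\frac{\P_{t,x'}(t+m,dy)}{\Lambda^{(m)}_{t,x,x'}(dy)}\wedge \frac{\P_{t,x}(t+m,dy)}{\Lambda^{(m)}_{t,x,x'}(dy)} \right)\Lambda^{(m)}_{t,x,x'} (dy)
\]
for every fixed pair $(x,x')$. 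This is the step I would verify first, either by copying the calculation of Lemma \ref{newMDt} with $t+1$ replaced by $t+m$, or simply by citing it.

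Next I would observe that the right-hand side is manifestly symmetric in $(x,x')$. Indeed, by its very definition
\[
\Lambda^{(m)}_{t,x,x'}(dz) = \tfrac{1}{2}\bigl(\P_{t,x}(t+m,dz) + \P_{t,x'}(t+m,dz)\bigr) = \Lambda^{(m)}_{t,x',x}(dz),
\]
so the dominating measure is invariant under swapping $x$ and $x'$. Moreover, the integrand is of the form $a \wedge b$ with $a = d\P_{t,x'}(t+m,\cdot)/d\Lambda^{(m)}_{t,x,x'}$ and $b = d\P_{t,x}(t+m,\cdot)/d\Lambda^{(m)}_{t,x,x'}$, and the binary operation $\wedge$ is commutative: $a \wedge b = b \wedge a$. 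Therefore swapping the roles of $x$ and $x'$ exchanges $a$ and $b$ but leaves both the integrand and the reference measure invariant, hence the integral is unchanged. This delivers $\kappa_t^{(m)}(x,x') = \kappa_t^{(m)}(x',x)$.

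I do not anticipate a genuine obstacle: the only point requiring care is that the representation of Lemma \ref{newMDt} must be read as an equality \emph{before} taking the infimum, so that it applies to the pointwise quantity $\kappa_t^{(m)}(x,x')$. Once this is noted, the claim is immediate from the symmetry of $\Lambda^{(m)}_{t,x,x'}$ and the commutativity of $\wedge$.
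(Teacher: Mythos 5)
Your proposal is correct and follows essentially the same route as the paper: the paper's own proof of Lemma \ref{jj} rewrites $\kappa^{(m)}_t$ by multiplying and dividing by the density $d\P_{t,x}(t+m,\cdot)/d\Lambda^{(m)}_{t,x,x'}$, arriving at exactly the symmetric representation you cite, and then concludes from the symmetry of $\Lambda^{(m)}_{t,x,x'}$ and the commutativity of $\wedge$. Your observation that the representation of Lemma \ref{newMDt} holds pointwise in $(x,x')$ before the infimum is taken is indeed the correct reading and is precisely what the paper's calculation establishes.
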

\noindent
{\em Proof.} 
We have, 
 
\begin{eqnarray*}
\displaystyle \kappa^{(m)}_t(x',x) = \int \left(\frac{\P_{t,x'}(t+m,dy)}{\P_{t,x}(t+m,dy)}\wedge 1 \right)\P_{t,x}(t+m,dy) 
 \nonumber \\ \nonumber \\
= \int \left(\frac{\P_{t,x'}(t+m,dy)}{\P_{t,x}(t+m,dy)}\wedge 1 \right)\frac{\P_{t,x}(t+m,dy)}{\Lambda^{(m)}_{t,x,x'}(dy)} \Lambda^{(m)}_{t,x,x'}(dy)
 \nonumber \\ \nonumber \\
\displaystyle = \int \left(\frac{\P_{t,x'}(t+m,dy)}{\Lambda^{(m)}_{t,x,x'}(dy)} \wedge \frac{\P_{t,x}(t+m,dy)}{\Lambda^{(m)}_{t,x,x'}(dy)} \right) \Lambda^{(m)}_{t,x,x'}(dy). 
\end{eqnarray*}
The latter expression is  symmetric with respect to $x$ and $x'$, which proves the Lemma \ref{jj}. 
The following proposition follows from the calculus similar to that for the homogeneous case.
\begin{Proposition}\label{thm_erg2}
Let the values of $\kappa_t$ be defined as in  (\ref{betterMD}). Then 
the uniform bound is satisfied for every $n$, 
\begin{equation}\label{exp_bd3}
 \sup_{x, x'}\sup_{A\subset S} |\P_{0,x}(n,A) - \P_{0,x'}(n,A)| \le \prod_{t=0}^{n-1} (1-\kappa_t)^{}.
\end{equation}
Also, for any $m\ge 1$
$$
\sup_{x,x'}\sup_{A\subset S} |\mu^x_n(A) - \mu^{x'}_n(A)| \le   \prod_{t=0}^{[(n-1)/m]} (1-\kappa_{tm}^{(m)}), 
$$
and 
\begin{equation*}
\sup_{x,x'}\|\mu^x_n - \mu^{x'}_n\|_{TV} \le 2  \prod_{t=0}^{[(n-1)/m]} (1-\kappa_{tm}^{(m)}). 
\end{equation*}

\end{Proposition}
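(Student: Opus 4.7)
The plan is to adapt the classical Dobrushin contraction argument to the non-homogeneous setting. Let $P_t$ denote the Markov operator $(\mu P_t)(A):=\int \P_{t,x}(t+1,A)\,\mu(dx)$, so that $\mu_n^x = \delta_x P_0 P_1\cdots P_{n-1}$, and set $d(\mu,\nu):=\sup_{A\in\sigma(S)}|\mu(A)-\nu(A)|$. The goal is to show that $d$ contracts under $P_t$ by a factor at most $1-\kappa_t$ and then to iterate this contraction along the chain.

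The key single-step estimate I would establish is
$$d(\mu P_t,\nu P_t)\le (1-\kappa_t)\,d(\mu,\nu)$$
for any probability measures $\mu,\nu$ on $(S,\sigma(S))$. To prove it, set $\alpha:=d(\mu,\nu)$, assume $\alpha>0$, and use the Hahn--Jordan decomposition to write $\mu-\nu=\alpha(\tilde\mu-\tilde\nu)$ with mutually singular probability measures $\tilde\mu,\tilde\nu$. Then for every $A\in\sigma(S)$,
$$\mu P_t(A)-\nu P_t(A)=\alpha\int\!\!\int[\P_{t,x}(t+1,A)-\P_{t,y}(t+1,A)]\,\tilde\mu(dx)\,\tilde\nu(dy),$$
and pointwise one has $|\P_{t,x}(t+1,A)-\P_{t,y}(t+1,A)|\le 1-\int \P_{t,x}(t+1,\cdot)\wedge\P_{t,y}(t+1,\cdot)\le 1-\kappa_t$ by the very definition (\ref{MDt}) of $\kappa_t$. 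Taking absolute value and supremum over $A$ yields the claimed contraction. Iterating $n$ times with $\mu=\delta_x$, $\nu=\delta_{x'}$ (for which $d(\delta_x,\delta_{x'})\le 1$) produces (\ref{exp_bd3}).

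The $m$-step bound follows by applying the same one-step contraction argument to the skeleton chain $(X_{km})_{k\ge 0}$, which is itself non-homogeneous Markov with transition kernel $\P_{km,x}(km+m,\cdot)$ and MD coefficient $\kappa^{(m)}_{km}$. Iterating the $m$-step contraction over the full $m$-blocks contained in $[0,n]$ gives the product in the second bound, with any residual run shorter than $m$ steps absorbed using the trivial $(1-\kappa_s)\le 1$. The total variation statement is then immediate from the identity $\|\mu-\nu\|_{TV}=2\sup_A|\mu(A)-\nu(A)|$ valid for probability measures.

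The main technical point I would want to verify carefully is the use of the Hahn--Jordan decomposition and the exchange of $\sup_A$ with the double integral on the general measurable space $(S,\sigma(S))$; both are classical given the standing joint measurability of $x\mapsto\P_{t,x}(t+1,A)$, so I anticipate no real obstacle beyond routine book-keeping.
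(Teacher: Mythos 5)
Your argument is correct and is essentially the classical route the paper itself points to: Proposition \ref{thm_erg2} is stated without proof, with a reference to Kolmogorov's ``calculus similar to that for the homogeneous case'', and that calculus is exactly the Dobrushin contraction-coefficient argument you give (Hahn--Jordan decomposition $\mu-\nu=\alpha(\tilde\mu-\tilde\nu)$, the pointwise identity $\sup_A|\P_{t,x}(t+1,A)-\P_{t,y}(t+1,A)|=1-\kappa_t(x,y)\le 1-\kappa_t$, and iteration of the one-step contraction). Within the paper there is a second, genuinely different route: the bound (\ref{exp_bd3}) also drops out of the coupling estimate (\ref{estimate3}) of Lemma \ref{lemma:2}, as the authors note in the proof of Proposition \ref{prononhomo}; the coupling route costs the whole construction of $(\eta^1,\eta^2,\xi,\zeta)$ but yields the stronger state-dependent bound $\E\prod_i(1-\kappa_i(\eta^1_i,\eta^2_i))$, of which (\ref{exp_bd3}) is only the crude corollary, whereas your operator-contraction argument is shorter and self-contained.

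One bookkeeping caveat: your block decomposition yields $\prod_{t=0}^{[n/m]-1}(1-\kappa^{(m)}_{tm})$, i.e.\ $[n/m]$ factors, which is consistent with the homogeneous bound (\ref{exp_bd42}) $(1-\kappa^{(m)})^{[n/m]}$. The proposition as printed has $\prod_{t=0}^{[(n-1)/m]}$, i.e.\ $[(n-1)/m]+1$ factors, which exceeds $[n/m]$ by one whenever $m\nmid n$ and would require using the incomplete block $[\,[n/m]m,\,([n/m]+1)m\,]$ extending past time $n$; since the distance $\sup_A|\mu^x_k(A)-\mu^{x'}_k(A)|$ is non-increasing in $k$, a bound at a later time does not transfer back to time $n$, so that extra factor is not justified. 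This is an indexing slip in the statement (it already disagrees with the paper's own homogeneous version); your count is the provable one, and you should state it explicitly rather than claim the printed product verbatim.
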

\subsection{Markov coupling (non-homogeneous)}
The Lemma \ref{odvuh} can be generalised to a sequence of random variables: the coupling construction for Markov chains based on \cite{Vaserstein} is presented in what follows. Consider two versions $(X^1_n), (X^2_n)$ of the same Markov process with two initial distributions $\mu_0^1$ and  $\mu_0^2$ respectively (this does not exclude the case of non-random initial states). Denote 
\begin{equation*}
\kappa(0) :=
\int \left(\frac{\mu_0^1(dy)}{\mu_0^2(dy)}\wedge 1 \right)\mu_0^2(dy).
\end{equation*}
It is clear that $0\le \kappa(0)\le1$, and similarly for $\kappa(u,v)$ for all $u,v$.  
We assume that $X^1_0$ and $X^2_0$ have different distributions, so $\kappa(0)<1$. Otherwise we obviously have
$X^1_n\stackrel{d}{=}X^2_n$ (equality in distribution) for all $n$, and the coupling can be made trivially, just by letting  $\widetilde X^1_n= \widetilde
X^2_n:=X^1_n$. 

The random variables $\left(\eta^1_0,\eta^2_0,\xi_0,\zeta_0\right)$ are chosen directly on the basis of the Lemma \ref{odvuh} as $\left(\eta^1,\eta^2,\xi,\zeta\right)$, according to the distributions in (\ref{laws}), exactly as in the homogeneous case. 
Further, by induction, assuming that the random variables $\left(\eta^1_n,\eta^2_n,\xi_n,\zeta_n\right)$ have been determined for some $n$, let us show how to construct them for $n+1$. For this aim, let us define the transition probability density $\phi_n$ with respect to the measure $\Lambda_{n,x^1, x^2}$ (more accurately, with respect to $\Lambda_{n,x^1, x^2} \times \Lambda_{n,x^1, x^2} \times \Lambda_{n,x^1, x^2}\times (\delta_0 + \delta_1)/2$) for this (vector-valued) process as follows,
\begin{equation}\label{process_eta2}
\phi_t(x,y):=\phi_{1,t}(x,y^1)\phi_{2,t}(x,y^2)\phi_{3,t}(x,y^3) \phi_{4,t}(x,y^4),
\end{equation}
where $x=(x^1,x^2,x^3,x^4)$, $y=(y^1,y^2,y^3,y^4)$, and if
 $0<\kappa_t(x^1,x^2)<1$, then
\begin{align}
&\displaystyle \phi_{t,1}(x,u):=\frac{p_t(x^1,u)-p_t(x^1,
u)\wedge p_t(x^2,u)}{1-\kappa_t(x^1,x^2)}, \quad
\phi_{t,2}(x,u):=\frac{p_t(x^2,u)-p_t(x^1,u)\wedge p_t(x^2,u)}{1-\kappa_t(x^1,x^2)},\label{phi_1211}
 \\\nonumber\\
&\displaystyle \phi_{t,3}(x,u):=1(x^4=1)\frac{p_t(x^1,u)\wedge
 p_t(x^2,u)}{\kappa_t(x^1,x^2)}+1(x^4=0)p_t(x^3,u),\label{phi_311}
 \\\nonumber\\ 
&\displaystyle \phi_{t,4}(x,u):=1(x^4=1)\left(\delta_1(u)(1-\kappa_t(x^1,x^2))+ 
\delta_0(u)\kappa_t(x^1,x^2)\right) +1(x^4=0)\delta_0(u)\label{phi_411}. 
\end{align}
The case $x^4=0$ signifies coupling which has already been realised at the previous step, and $u=0$ means successful coupling at the transition.  
Just as in the homogeneous case, $\phi_{t,1}$ and $\phi_{t,2}$ do not depend on the variable $x^3$; we will denote it by the notation $\phi_{t,i}((x^1,x^2,*,x^4),u)$ ($i=1,2$) where $*$ stands for any possible value of $x^3$. Also even if it is written $\phi_{t,3}((x^1,x^2,x^3,1),u)$, yet, this value does not depend on $x^3$ either.

In the degenerate cases, if $\kappa_t(x^1,x^2)=0$ (coupling at the transition is impossible), then instead of (\ref{phi_311}) we set,  e.g., 
$$
\phi_{t,3}(x,u):=1(x^4=1)p_t(x^3,u) + 1(x^4=0)p_t(x^3,u) = p_t(x^3,u),
$$ 
and if $\kappa_t(x^1,x^2)=1$, then instead of (\ref{phi_1211}) we may set 
$$
\phi_{t,1}(x,u)=\phi_{t,2}(x,u):= p_t(x^1,u). 
$$ 
Note that in the case of $\kappa(x^1,x^2)=1$ we shall not assume, in particular, that the ``next'' values $\eta^1_{n+1}$ and  $\eta^2_{n+1}$ are necessarily different, and in the case of existence of a non-empty set of such pairs $x^1 \neq x^2$ it is not convenient to consider the process $(\eta^1_n,\eta^2_n)$ on the truncated state space $\hat S^2$; however, after the moment of coupling it is of no importance for our estimate.  
The formula (\ref{phi_411}) which defines \(\phi_4(x,u)\) can be accepted in all cases.

\medskip

Similarly to the homogeneous case, we have
\begin{align*}
\frac{\mathbb P(\tilde X^1_{n+1} \in dx^1 |\tilde X^1_{n}, \tilde X^2_{n}, \tilde X^1_{n} \not =\tilde X^2_{n})}{\Lambda_{n,\tilde X^1_{n}, \tilde X^2_{n}}(dx^1)}
= \frac{\mathbb P(\tilde X^1_{n+1} \in dx^1 |\tilde X^1_{n}=\eta^1_{n}, \tilde X^2_{n}=\eta^2_{n}, \eta^1_{n} \not =\eta^2_{n})}
{\Lambda_{n,\eta^1_{n}, \eta^2_{n}}(dx^1)}
= p_n(\tilde X^1_n,x^1), 
\end{align*}
due to (\ref{phi_1211}), and similarly, given $(\tilde X^1_{n}, \tilde X^2_{n})$ and $(\tilde X^1_{n} \not =\tilde X^2_{n})$, 
\begin{align*}
\frac{\mathbb P(\tilde X^2_{n+1} \in dx^2 |\tilde X^1_{n}, \tilde X^2_{n},\tilde X^1_{n} \not =\tilde X^2_{n})}{\Lambda_{n,\tilde X^1_{n}, \tilde X^2_{n}}(dx^2)} =  p_n(\eta^2_n,x^2) =  p_n(\tilde X^2_n,x^2).
\end{align*}
Also, given 
$(\tilde X^1_{n}, \tilde X^2_{n})$ and $(\tilde X^1_{n} =\tilde X^2_{n})$ we can check that for any $z$ (which stands here both for $x^1$ and $x^2$) according to (\ref{phi_311}) we have, 
\begin{align*}
\frac{\mathbb P(\tilde X^1_{n+1} \in dz |\tilde X^1_{n}, \tilde X^2_{n},\tilde X^1_{n} =\tilde X^2_{n})}{\Lambda_{n,\tilde X^1_{n}, \tilde X^2_{n}}(dz)} 
= \frac{\mathbb P(\tilde X^2_{n+1} \in dz |\tilde X^1_{n}, \tilde X^2_{n},\tilde X^1_{n} =\tilde X^2_{n})}{\Lambda_{n,\tilde X^1_{n}, \tilde X^2_{n}}(dz)} =  p_n(\tilde X^1,z) =  p_n(\tilde X^2,z).
\end{align*}
Therefore, we have
\begin{align*}
\frac{\mathbb P(\tilde X^1_{n+1} \in dx^1 |\tilde X^1_{n}, \tilde X^2_{n})}{\Lambda_{n,\tilde X^1_{n}, \tilde X^2_{n}}(dx^1)} = \frac{\mathbb P(\tilde X^1_{n+1} \in dx^1 |\tilde X^1_{n}, \tilde X^2_{n})}{\Lambda_{n,\tilde X^1_{n}, \tilde X^2_{n}}(dx^1)}(1(\tilde X^1_{n} =\tilde X^2_{n}) + 1(\tilde X^1_{n} \not =\tilde X^2_{n}) 
 \\
= p_n(\tilde X^1_n,x^1) (1(\tilde X^1_{n} =\tilde X^2_{n}) + 1(\tilde X^1_{n} \not =\tilde X^2_{n})  = p_n(\tilde X^1_n,x^1),
\end{align*}
in all cases.

Due to all of these, each of the components $\tilde X^1_{n}$ and $\tilde X^2_{n}$ is a Markov process with the same generator as $X^1_{n}$ and $X^2_{n}$. (NB. The little calculus above is, of course, not the proof of the Markov property, which follows from the construction itself;  rather these formulae show how to understand the transition probability kernels of the chosen coupling algorithm.) Moreover, the following lemma holds true. 
\begin{Lemma}\label{lemma:2}
Let the random variables $\widetilde X^1_n$ and $\widetilde X^2_n$, for $n\in\mathbb{Z}_+$ be defined   by the following formulae:
\begin{align}\label{xin3}
\widetilde X^1_n:=\eta^1_n 1(\zeta_n=1)+\xi_n 1(\zeta_n=0), \quad 
\widetilde X^2_n:=\eta^2_n 1(\zeta_n=1)+\xi_n 1(\zeta_n=0).
\end{align}
Then 
\begin{equation}\label{l71}
\widetilde X^1_n\stackrel{d}{=}X^1_n, \;\;\widetilde
 X^2_n\stackrel{d}{=}X^2_n, \quad \mbox{for all $n\ge 0$,}
\end{equation}
which implies that the process $\widetilde X^1$ is equivalent to $X^1$, and the process $\widetilde X^2$ is equivalent to $X^2$ in distribution in the space of trajectories; in particular, each of them is a Markov process with the same generator as $X^1$.
Moreover, the couple $\tilde X_n:=\left(\widetilde X^1_n, \widetilde X^2_n\right)$, $n\ge 0$, is also  a  (non-homogeneous) Markov process, and 
\begin{equation*}\label{mpequi}
\left(\widetilde X^1_n\right)_{n\ge 0}\stackrel{d}{=}\left(X^1_n\right)_{n\ge 0}, \quad \& \quad 
\left(\widetilde X^2_n\right)_{n\ge 0}\stackrel{d}{=}\left(X^2_n\right)_{n\ge 0}.
\end{equation*}
Moreover, 
\begin{equation}\label{l723}
\widetilde X^1_n=\widetilde X^2_n, \quad \forall \; n\ge
n_0(\omega):=\inf\{k\ge0: \zeta_k=0\}, 
\end{equation}
and
\begin{equation}\label{estimate3}
\P_{\mu^1,\mu^2}(\widetilde X^1_n\neq \widetilde
 X^2_n)\le
\E_{\mu^1,\mu^2}\prod_{i=0}^{n-1}
 (1-\kappa_i(\eta^1_i,\eta^2_i)).
\end{equation}
\end{Lemma}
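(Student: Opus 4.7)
The plan is to proceed by induction on $n$, exploiting the explicit coupling construction (\ref{process_eta2})--(\ref{phi_411}) with Lemma \ref{odvuh} as the base case. The calculations of conditional transition densities immediately preceding the statement already establish the crucial identity
\[
\frac{\P(\widetilde X^j_{n+1}\in dx^j\mid \widetilde X^1_n, \widetilde X^2_n)}{\Lambda_{n,\widetilde X^1_n,\widetilde X^2_n}(dx^j)} = p_n(\widetilde X^j_n, x^j), \qquad j=1,2,
\]
on both of the events $\{\widetilde X^1_n = \widetilde X^2_n\}$ and $\{\widetilde X^1_n \neq \widetilde X^2_n\}$; the task is therefore mainly to package this into the four claims of the lemma.

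For the distributional identity (\ref{l71}), I would verify the base case $n=0$ directly from Lemma \ref{odvuh}, which is exactly how $(\eta^1_0,\eta^2_0,\xi_0,\zeta_0)$ was chosen. The inductive step then follows from the displayed identity by the tower property: assuming $\widetilde X^j_n \stackrel{d}{=} X^j_n$, for any bounded measurable $g$, integrating the conditional density against $\Lambda_{n,\widetilde X^1_n,\widetilde X^2_n}$ recovers $\E g(\widetilde X^j_{n+1}) = \E g(X^j_{n+1})$. Equality in distribution on the full trajectory space then follows by iterating the same computation over finite cylinder sets and invoking the Markov property of $X^j$. The Markov property of $\tilde X_n=(\widetilde X^1_n,\widetilde X^2_n)$ is inherited from the augmented four-component process $(\eta^1_n,\eta^2_n,\xi_n,\zeta_n)$, which is Markov by construction since its transition kernel $\phi_t$ depends only on its current state.

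The coupling persistence (\ref{l723}) reduces to two direct observations about (\ref{phi_411}) and (\ref{phi_311}): whenever $\zeta_k=0$, the transition law of $\zeta_{k+1}$ is $\delta_0$, so $0$ is absorbing for the $\zeta$-chain; and whenever $\zeta_k=0$ the formula (\ref{xin3}) gives $\widetilde X^1_k = \widetilde X^2_k = \xi_k$, while $\phi_{t,3}$ on the event $\{x^4=0\}$ evolves $\xi$ as a single chain with kernel $p_t(\xi_k,\cdot)$. Hence the two marginals remain equal for all $n \ge n_0$.

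Finally, for the estimate (\ref{estimate3}), I condition on $\mathcal{F}_n := \sigma(\eta^1_k,\eta^2_k,\xi_k,\zeta_k : k\le n)$: from $\phi_{t,4}$, on the event $\{\zeta_n=1\}$ one has $\P(\zeta_{n+1}=1\mid\mathcal{F}_n) = 1-\kappa_n(\eta^1_n,\eta^2_n)$, while on $\{\zeta_n=0\}$ we have $\zeta_{n+1}=0$ almost surely. Iterating the tower property gives
\[
\P(\zeta_n=1) \le \E \prod_{i=0}^{n-1}(1-\kappa_i(\eta^1_i,\eta^2_i)),
\]
and since $\{\widetilde X^1_n \neq \widetilde X^2_n\}\subseteq\{\zeta_n=1\}$ by (\ref{xin3}), the bound (\ref{estimate3}) follows. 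The main subtlety to watch throughout is the treatment of the degenerate cases $\kappa_t(x^1,x^2)\in\{0,1\}$, for which the fall-back definitions of $\phi_{t,1},\phi_{t,2},\phi_{t,3}$ need to be checked to preserve both the marginal identity and the product bound; this is routine but requires attention in each branch.
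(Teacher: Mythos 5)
Your proposal is correct and follows essentially the same route as the paper's proof: the paper likewise derives (\ref{l723}) and (\ref{estimate3}) from the transition law of $\zeta$ (absorption at $0$, and $\P(\zeta_{n+1}=0\mid\zeta_n=1,\eta^1_n,\eta^2_n)=\kappa_n(\eta^1_n,\eta^2_n)$) followed by an iterated conditioning/tower argument, and obtains (\ref{l71}) and the Markov property of the pair from the marginal and joint transition-density computations preceding the lemma. The only cosmetic difference is that the paper peels off the indicators $1(\zeta_i=1)$ one at a time using conditional independence given ${\cal F}_{n-2}$ rather than conditioning directly on ${\cal F}_n$, which amounts to the same bookkeeping.
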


\medskip

\noindent
{\em Proof.} Let us first show (\ref{l723}) and (\ref{estimate3}).
As it follows from \eqref{process_eta} and (\ref{phi_411}),
\begin{align*}
&\displaystyle \mathbb P(\zeta_{n+1}=0|\zeta_n=0)=1,
 \\ \\
&\displaystyle \mathbb P(\zeta_{n+1}=0|\zeta_n=1,\eta^1_n=x^1,\eta^2_n=x^2)=\kappa_n(x^1,x^2).
\end{align*}
Indeed, given $\zeta_n=0$ we have, 
$$
\phi_{n,4}((x^1,x^2,x^3,0),u) =
1(x^4=0)\delta_0(u), 
$$
which implies that 
$$
\mathbb P(\zeta_{n+1}=0|\zeta_n=0)=1, 
$$
as required. As a consequence, 
we obtain (\ref{l723}).

Further, given $\zeta_n=1$ we have for any $(x^1,x^2,x^3)$, 
\begin{align*}
\phi_{n,4}((x^1,x^2,x^3,1),u):=\left(\delta_1(u)(1-\kappa_n(x^1,x^2))+ 
\delta_0(u)\kappa_n(x^1,x^2)\right), 
\end{align*}
which implies that 
$$
\mathbb P(\zeta_{n+1}=0|\zeta_n=1,\eta^1_n=x^1,\eta^2_n=x^2)=\kappa_n(x^1,x^2), 
$$
as required.
Hence, if two processes $\widetilde X^1$ and $\widetilde X^2$ are coupled at time $n$, then they will remain coupled at time $n+1$, and if they were not
coupled, then the coupling occurs with the (conditional) probability $\kappa(\eta^1_n,\eta^2_n)$; in this case the latter random variable equals $\kappa(X^1_n,X^2_n)$. So, 
from (\ref{xin3}) we deduce, 
\begin{align}\label{pred}
\P(\widetilde X^1_n\neq \widetilde X^2_n) \le  
\P (\zeta_n = 1) \equiv \P (\prod_{i=0}^{n}\zeta_i = 1) 
= \E \prod_{i=0}^{n}1(\zeta_i = 1) 
 \\ \nonumber \\  \nonumber 
= \E \prod_{i=0}^{n-1}1(\zeta_i = 1) \E (1(\zeta_n = 1)|{\cal F}_{n-1})
=  \E \prod_{i=0}^{n-1}1(\zeta_i = 1) (1- \kappa_{n-1}(\eta^1_{n-1}, \eta^2_{n-1})),
\end{align}
and further, using the conditional independence of the random variables  $\zeta_{n-1}, \eta^1_{n-1}, \eta^2_{n-1}$ given ${\cal F}_{n-2}$ by virtue of the construction (\ref{process_eta2}), we continue
\begin{align*}
\P(\widetilde X^1_n\neq \widetilde X^2_n) \le  
\E \prod_{i=0}^{n-2}1(\zeta_i = 1) 1(\zeta_{n-1} = 1)(1- \kappa_{n-1}(\eta^1_{n-1}, \eta^2_{n-1})) 
\\\\
= \E \prod_{i=0}^{n-2}1(\zeta_i = 1) \E (1(\zeta_{n-1} = 1)(1- \kappa_{n-1}(\eta^1_{n-1}, \eta^2_{n-1})) | {\cal F}_{n-2})
 \\\\
= \E \prod_{i=0}^{n-2}1(\zeta_i = 1) \E (1(\zeta_{n-1} = 1)| {\cal F}_{n-2}) \E((1- \kappa_{n-1}(\eta^1_{n-1}, \eta^2_{n-1})) | {\cal F}_{n-2})
\\\\
= \E \prod_{i=0}^{n-2}1(\zeta_i = 1) (1- \kappa_{n-1}(\eta^1_{n-2}, \eta^2_{n-2}))  \E((1- \kappa_{n-1}(\eta^1_{n-1}, \eta^2_{n-1})) | {\cal F}_{n-2}) 
\\\\
= \E \prod_{i=0}^{n-2}1(\zeta_i = 1) (1- \kappa_{n-1}(\eta^1_{n-2}, \eta^2_{n-2}))  (1- \kappa_{n-1}(\eta^1_{n-1}, \eta^2_{n-1})),   
\end{align*}
and so on by induction, which finally gives us the desired bound  (\ref{estimate3}).

\medskip

Further, for the pair $(\tilde X^1, \tilde X^2)$ we have its transition density with respect to $\Lambda_{n,\tilde X^1_{n}, \tilde X^2_{n}}$,
\begin{align*}
\frac{\mathbb P(\tilde X^1_{n+1} \in dx^1, \tilde X^2_{n+1} \in dx^2 |\tilde X^1_{n}, \tilde X^2_{n})}{\Lambda_{n,\tilde X^1_{n}, \tilde X^2_{n}}(dx^1)}
 \\
= 1(\tilde X^1_{n} \not =\tilde X^2_{n}) 
\frac{\mathbb P(\tilde X^1_{n+1} \in dx^1, \tilde X^2_{n+1} \in dx^2 |\tilde X^1_{n}, \tilde X^2_{n})}{\Lambda_{n,\tilde X^1_{n}, \tilde X^2_{n}}(dx^1)}
 \\
+ 1(\tilde X^1_{n} =\tilde X^2_{n})  \frac{\mathbb P(\tilde X^1_{n+1} \in dx^1, \tilde X^2_{n+1} \in dx^2 |\tilde X^1_{n}, \tilde X^2_{n})}{\Lambda_{n,\tilde X^1_{n}, \tilde X^2_{n}}(dx^1)}
 \\
= 1(\tilde X^1_{n} \not =\tilde X^2_{n}) p_n(\tilde X^1_{n}, x^1) p_n(\tilde X^2_{n},x^2)  
+1(\tilde X^1_{n} =\tilde X^2_{n}) p_n(\tilde X^1_{n}, x^1)\delta(x_1-x_2).
\end{align*}
Until the first moment where $\zeta = 0$ both trajectories $\tilde X^1$ and  $\tilde X^2$ evolve independently, and the pair $(\tilde X^1, \tilde X^2)$ is a strong Markov process until this stopping time. After this stopping time they are equal and each of them remains a strong Markov process with the same transition kernel. This justifies (\ref{l71}) and (\ref{l723}), and the claim that the pair $(\tilde X^1_n, \tilde X^2_n, n\ge 0)$ is Markov and strong Markov. 
The Lemma \ref{lemma:2} is proved.

\subsection{Operators $V$ and $\hat V$ in the non-homogeneous case}\label{sec:Vr}
In the non-homogeneous situation it does not seem possible to improve or to simplify the bound (\ref{estimate3}) analogously to the simplification (\ref{newrate12}) of the estimate (\ref{estimate}) in the homogeneous case, without additional assumptions on the structure of the transition kernels. Two special cases will be discussed in what follows. 

Let us introduce the operators $V_t$ acting on (bounded, Borel measurable) functions $h$ on the space $S^2 := S\times S$ as follows: for $x=(x^1, x^2)\in S^2$,
\begin{equation}\label{V3}
V_th(x) := (1-\kappa_t(x^1,x^2)) \mathbb E_{t,x^1,x^2}h(\tilde X_{t+1}) \equiv \exp(\psi_t(x))\mathbb E_{t,x^1,x^2}h(\tilde X_{t+1}),  
\end{equation}
where in the last expression $\psi_t(x):= \ln (1-\kappa_t(x^1,x^2))$ (assume $\ln 0 = -\infty$); recall that 
$\tilde X_n = (\tilde X_n^1, \tilde X^2_n)$. Note that on the diagonal $x=(x^1,x^2) : x^1=x^2$ we have 
$$
V_th(x) = (1-\kappa_t(x^1,x^1))\mathbb E_{t,x^1,x^2}h(\tilde X_{t+1}) = 0, 
$$
since $\kappa_t(x^1,x^1) = 1$ for any $x^1$. Hence, similarly to the homogeneous case, it makes sense to either consider the functions $h$ on $S^2$ vanishing on the diagonal $\text{diag}(S^2)= (x =  (x^1,x^1)\in S^2)$, or, equivalently, to reduce the operator itself to functions defined on 
$$
\hat S^2 := S^2 \setminus \text{diag}(S^2), 
$$
that is, to define for $x=(x^1,x^2)\in \hat S^2$ and for functions $\hat h: \hat S^2 \to \mathbb R$, 
$$
\hat V_t\hat h(x):= (1-\kappa_t(x^1,x^2))\mathbb E_{t,x^1,x^2}\hat h(\tilde X_{t+1}). 
$$

The estimate (\ref{estimate3}) can be rewritten via the operators $V_t$, or, equivalently,  via $\hat V_t$ as follows: 
\begin{align*}
\!\!\P_{\mu^1,\mu^2}(\widetilde X^1_n\neq \widetilde X^2_n)\!\le\! 
\int 
\prod_{i=0}^{n-1} \hat V_i{\bf 1}(x^1,x^2)1(x^1\!\not =\!x^2)\mu^1(dx^1) \mu^2(dx^2).
\end{align*}

~

\begin{Remark}
Note that in may be more convenient in examples to use the ``preliminary'' bound
\begin{align*}
\P(\widetilde X^1_n\neq \widetilde X^2_n) \le  
\P (\zeta_n = 1) \equiv \E \left(\prod_{i=0}^{n}1(\tilde X^1_i \neq \tilde X^2_i)\right), 
\end{align*}
equivalent to (\ref{pred}).

\end{Remark}

\begin{Theorem}\label{lastthm3}
In all cases
\begin{align}\label{newrate13}
\limsup\limits_{n\to\infty} \frac1n \ln \| \P_{\mu^1}(n,\cdot) - \P_{\mu^2}(n,\cdot)\|_{TV}
  \nonumber \\\\ \nonumber
\!\le\! \limsup\limits_{n\to\infty} \frac1{n} \ln \int 
\prod_{i=0}^{n-1} \hat V_i{\bf 1}(x^1,x^2)1(x^1\!\not =\!x^2)\mu^1(dx^1) \mu^2(dx^2).
\end{align}
\end{Theorem}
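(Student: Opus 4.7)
\noindent\emph{Proof plan.} The strategy will mirror that of Theorem \ref{lastthm2} almost verbatim, with the single homogeneous operator $\hat V$ replaced by the time-ordered composition $\hat V_0 \hat V_1 \cdots \hat V_{n-1}$. First I would invoke the standard coupling inequality: since by Lemma \ref{lemma:2} the random variables $\widetilde X^1_n$ and $\widetilde X^2_n$ are distributionally equivalent to $X^1_n$ and $X^2_n$, for any Borel $A\subset S$ one has $|\P_{\mu^1}(n,A)-\P_{\mu^2}(n,A)|\le \P_{\mu^1,\mu^2}(\widetilde X^1_n\ne \widetilde X^2_n)$, and taking the supremum over $A$ gives $\|\P_{\mu^1}(n,\cdot)-\P_{\mu^2}(n,\cdot)\|_{TV} \le 2\,\P_{\mu^1,\mu^2}(\widetilde X^1_n\ne \widetilde X^2_n)$.

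Next I would use the operator identity for this coupling probability, namely the bound
\[
\P_{\mu^1,\mu^2}(\widetilde X^1_n\ne \widetilde X^2_n) \le \int \prod_{i=0}^{n-1}\hat V_i \mathbf{1}(x^1,x^2)\, \mathbf{1}(x^1\ne x^2)\,\mu^1(dx^1)\mu^2(dx^2),
\]
which is exactly the inequality displayed in the text immediately preceding the theorem. To justify it I would verify by downward induction on the composition length (from $i=n-1$ down to $i=0$), using the time-inhomogeneous Markov property of the pair $(\widetilde X^1_n,\widetilde X^2_n)$ provided by Lemma \ref{lemma:2} together with the transition densities (\ref{process_eta2}), that
\[
\hat V_0\hat V_1\cdots \hat V_{n-1}\mathbf{1}(x^1,x^2) = \E_{0,x^1,x^2}\!\prod_{i=0}^{n-1}\!\bigl(1-\kappa_i(\widetilde X^1_i,\widetilde X^2_i)\bigr)\,\mathbf{1}(x^1\ne x^2);
\]
because $\kappa_i(y,y)=1$, each factor vanishes once coupling has occurred, so on the event $\{\widetilde X^1_j=\widetilde X^2_j\}$ for some $j\le i$ we recover $\widetilde X^1_i=\widetilde X^2_i=\xi_i$ and the product is zero, while on the complementary event $\widetilde X^j_i=\eta^j_i$ and we retrieve the factors $(1-\kappa_i(\eta^1_i,\eta^2_i))$ from estimate (\ref{estimate3}). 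Integrating against $\mu^1\otimes\mu^2$ gives the displayed inequality.

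Putting these two inequalities together, taking $\tfrac1n\ln$ on both sides and letting $n\to\infty$, the constant $2$ disappears in the limit, yielding (\ref{newrate13}). The most delicate step will be the inductive unwinding of the operator composition, since one must keep track of the diagonal $\mathrm{diag}(S^2)$ where $\hat V_i$ annihilates its argument, and exploit the conditional independence of $\eta^1_i,\eta^2_i,\xi_i,\zeta_i$ built into the construction (\ref{process_eta2}). All the remaining computations are direct transcriptions of the proof of Theorem \ref{lastthm2}, so no further genuinely new ingredient is needed.
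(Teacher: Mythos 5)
Your proposal is correct and follows essentially the same route as the paper's own (very terse) proof: the coupling inequality from Lemma \ref{lemma:2}, the operator bound $\P_{\mu^1,\mu^2}(\widetilde X^1_n\neq\widetilde X^2_n)\le\int\prod_{i=0}^{n-1}\hat V_i{\bf 1}\,1(x^1\neq x^2)\,\mu^1(dx^1)\mu^2(dx^2)$ already displayed before the theorem (itself a rewriting of (\ref{estimate3})), and then $\frac1n\ln$ with $n\to\infty$ to absorb the factor $2$. Your extra inductive justification of the operator identity is more detail than the paper supplies, but it is consistent with its argument.
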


{\em Proof.} We have, 

\begin{align*}
\limsup\limits_{n\to\infty} \frac1n \ln \|  \P_{\mu^1}(n,\cdot) - \P_{\mu^2}(n,\cdot)\|_{TV} 
\le \limsup\limits_{n\to\infty} \frac1n \ln (2\P_{\mu^1,\mu^2}(\widetilde X^1_n\neq \widetilde X^2_n))
 \\\\
= \limsup\limits_{n\to\infty} \frac1n \ln (2 \int \P_{x^1,x^2}(\widetilde X^1_n\neq \widetilde X^2_n)\mu^1(dx^1)\mu^2(dx^2))
  \nonumber \\\\ \nonumber
\!\le\! \int 
\prod_{i=0}^{n-1} \hat V_i{\bf 1}(x^1,x^2)1(x^1\!\not =\!x^2)\mu^1(dx^1) \mu^2(dx^2),  
\end{align*}
as required. The theorem is proved.

\begin{Corollary}
In all cases, 
\begin{align}\label{newrate13a}
\limsup\limits_{n\to\infty} \frac1n \ln \| \P_{\mu^1}(n,\cdot) - \P_{\mu^2}(n,\cdot)\|_{TV}
\!\le\! \limsup\limits_{n\to\infty} \frac1{n} \ln \| \prod_{i=1}^{n} \hat V_i \|.
\end{align}
The limit is uniform with respect to the initial measures $\mu^1, \mu^2$.
\end{Corollary}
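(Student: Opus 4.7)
My plan is to start from the integral bound already established in Theorem \ref{lastthm3} and upgrade it to an operator-norm bound via two elementary observations: domination of integrals against probability measures by pointwise suprema, and the fact that positive operators on $(L^\infty, |\cdot|_B)$ attain their norm at the constant function $\mathbf 1$. No further coupling arguments are needed.

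First, I would drop the indicator $1(x^1\neq x^2)$ from the integrand (this only enlarges the right-hand side since the integrand is nonnegative) and bound the integral against the probability measure $\mu^1 \otimes \mu^2$ by the sup-norm of the integrand:
\[
\int \prod_{i=0}^{n-1}\hat V_i \mathbf{1}(x^1,x^2)\,1(x^1\neq x^2)\,\mu^1(dx^1)\mu^2(dx^2) \;\le\; \Bigl|\prod_{i=0}^{n-1}\hat V_i \mathbf{1}\Bigr|_B .
\]
Next, I would observe that each $\hat V_i$ defined by (\ref{V3}) sends nonnegative functions to nonnegative functions, hence the composition $T_n:=\prod_{i=0}^{n-1}\hat V_i$ is also a positive operator. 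For any such positive $T$ and any $h$ with $|h|_B\le 1$, the pointwise inequality $|Th|\le T|h|\le T\mathbf 1$ yields $|Th|_B\le |T\mathbf 1|_B$, so $\|T\| = |T\mathbf 1|_B$. Applying this to $T_n$ gives
\[
\Bigl|\prod_{i=0}^{n-1}\hat V_i \mathbf{1}\Bigr|_B = \Bigl\|\prod_{i=0}^{n-1}\hat V_i\Bigr\|.
\]

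Chaining these two bounds with Theorem \ref{lastthm3} and taking $\limsup_n \frac1n \ln$ yields
\[
\limsup_{n\to\infty}\frac1n\ln\|\P_{\mu^1}(n,\cdot)-\P_{\mu^2}(n,\cdot)\|_{TV} \;\le\; \limsup_{n\to\infty}\frac1n\ln\Bigl\|\prod_{i=0}^{n-1}\hat V_i\Bigr\|,
\]
which is (\ref{newrate13a}) up to a harmless re-indexing of the product (shifting indices by one does not affect the $\limsup\frac1n\ln$). For the uniformity claim: the upper bound does not depend on $\mu^1$ or $\mu^2$, so the same dominating sequence works simultaneously for all initial measures, and uniformity is automatic.

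The proof is essentially bookkeeping, and the only non-routine step is the identity $\|T\| = |T\mathbf 1|_B$ for positive operators on bounded Borel functions with the sup-norm; this is the point that must be stated and used carefully, but it is a standard fact and poses no real obstacle.
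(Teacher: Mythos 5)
Your proposal is correct and is exactly the argument the paper intends: the corollary is stated without proof as an immediate consequence of Theorem \ref{lastthm3}, obtained by bounding the integral against the probability measure $\mu^1\otimes\mu^2$ by the sup-norm of the integrand and using the identity $\|T\|=\sup_x T{\bf 1}(x)$ for positive operators, which the paper itself invokes in Section \ref{sec:Vr}. The only point you gloss over (harmlessly) is the index shift between $\prod_{i=0}^{n-1}$ and $\prod_{i=1}^{n}$, which is an inconsistency in the paper's own notation rather than a gap in your argument.
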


\medskip

\noindent
As it was noted earlier, in general there seems to be no way to apply the approach based on the spectral radius, so the bounds (\ref{newrate13}) and (\ref{newrate13a}) look as the best possibility. However, there are two special cases where a bit more can be added. The first case is a periodic transition kernel $\P_{t,x}(t+1,dy)$. Let there exist $T\ge 1$ such that 
\begin{equation}\label{T}
\P_{t+T,x}(t+T+1,dy) = \P_{t,x}(t+1,dy), \quad \forall \, t\ge 0.
\end{equation}
Clearly, this implies 
$$
\kappa_t(x^1,x^2) = \kappa_{t+T}(x^1,x^2), \quad \forall t\ge 0. 
$$
Consider the operators 
$$
V_{}^{(T)}h(x) = \mathbb E_{0,x^1,x^2}(\prod_{i=0}^{T-1} (1-\kappa_i(\tilde X_{i}))) h(\tilde X_{T}), \quad x\in S^2, 
$$
acting on functions on $S^2$, and 
$$
\hat V_{}^{(T)}h(x) = \mathbb E_{0,x^1,x^2}(\prod_{i=0}^{T-1} (1-\kappa_i(\tilde X_{i}))) h(\tilde X_{T}), \quad x\in \hat S^2, 
$$
acting on functions on $\hat S^2$.
\begin{Theorem}\label{lastthm4}
Under the assumption (\ref{T}), 
\begin{align}\label{newrate13d}
\limsup\limits_{n\to\infty} \frac1n \ln \| \P_{\mu^1}(n,\cdot) - \P_{\mu^2}(n,\cdot)\|_{TV}
  \nonumber \\\\ \nonumber
\!\le\! \limsup\limits_{n\to\infty} \frac{1}{n} \ln \int 
(\hat V^{(T)})^{[n/T]}{\bf 1}(x^1,x^2)1(x^1\!\not =\!x^2)\mu^1(dx^1) \mu^2(dx^2)
\le \frac{1}{T}\ln r(\hat V^{(T)}).
\end{align}
The limit is uniform with respect to the initial measures $\mu^1, \mu^2$.
\end{Theorem}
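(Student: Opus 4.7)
The plan is to start from the non-homogeneous coupling estimate already proved as Theorem \ref{lastthm3}, namely
\[
\|\P_{\mu^1}(n,\cdot) - \P_{\mu^2}(n,\cdot)\|_{TV} \le 2\int \prod_{i=0}^{n-1}\hat V_i{\bf 1}(x^1,x^2)\, 1(x^1\neq x^2)\,\mu^1(dx^1)\mu^2(dx^2),
\]
and then to convert the time-inhomogeneous product of operators into powers of a single ``period operator'' using the periodicity assumption (\ref{T}).

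First, I would verify the operator identity $\hat V^{(T)} = \hat V_0\hat V_1\cdots \hat V_{T-1}$ by iterating the definition of $\hat V_t$ and applying the tower property to the coupled Markov chain $(\tilde X^1_n,\tilde X^2_n)$, exactly as in the homogeneous analogue of Section \ref{sec:Vr}:
\[
(\hat V_0\cdots \hat V_{T-1})h(x) = \mathbb E_{0,x}\Bigl[\prod_{i=0}^{T-1}(1-\kappa_i(\tilde X_i))\, h(\tilde X_T)\Bigr] = \hat V^{(T)}h(x).
\]
Because (\ref{T}) forces $\kappa_{t+T}\equiv \kappa_t$, and the coupling densities (\ref{phi_1211})--(\ref{phi_411}) depend on $t$ only through $p_t$ and $\kappa_t$, we obtain $\hat V_{t+T} = \hat V_t$ for every $t\ge 0$. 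Writing $n = T[n/T] + r$ with $0\le r < T$, the $T$-periodicity of the sequence $\{\hat V_t\}$ yields the block decomposition
\[
\prod_{i=0}^{n-1}\hat V_i = (\hat V^{(T)})^{[n/T]}\,(\hat V_0 \cdots \hat V_{r-1}).
\]
Since each $\hat V_i$ is a positive operator with $\hat V_i{\bf 1} = 1-\kappa_i \le {\bf 1}$, iteration gives $(\hat V_0\cdots \hat V_{r-1}){\bf 1} \le {\bf 1}$ pointwise, and positivity of $(\hat V^{(T)})^{[n/T]}$ propagates the inequality to $\prod_{i=0}^{n-1}\hat V_i{\bf 1}(x) \le (\hat V^{(T)})^{[n/T]}{\bf 1}(x)$. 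This delivers the first inequality of the theorem.

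For the second inequality I would bound the integral by the sup-norm $\int (\hat V^{(T)})^{[n/T]}{\bf 1}\,\mu^1(dx^1)\mu^2(dx^2) \le \|(\hat V^{(T)})^{[n/T]}\|$, and then invoke the Gelfand formula $\|(\hat V^{(T)})^k\|^{1/k} \to r(\hat V^{(T)})$. Using $[n/T]/n \to 1/T$,
\[
\limsup_{n\to\infty}\frac{1}{n}\ln\|(\hat V^{(T)})^{[n/T]}\| \le \frac{1}{T}\ln r(\hat V^{(T)}).
\]
Uniformity in $(\mu^1,\mu^2)$ is automatic, since the final majorant involves only operator norms, which do not depend on the initial measures.

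The main obstacle is essentially bookkeeping rather than analytic: I need to confirm that the $T$-periodicity of the original kernels (\ref{T}) propagates through Vaserstein's coupling construction into $T$-periodicity of the coupled-kernel operators $\hat V_t$. This reduces to inspecting (\ref{phi_1211})--(\ref{phi_411}) and noticing that $\phi_{t,j}$ depends on $t$ only via the periodic data $p_t$ and $\kappa_t$, so no additional assumption is required; the block argument then goes through exactly as sketched, and no new compactness or irreducibility hypothesis is needed beyond what is already inherent in the Gelfand formula.
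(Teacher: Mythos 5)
Your proposal is correct and follows exactly the route the paper intends: the paper's own proof is the one-line remark that the result ``follows straightforwardly from the bounds (\ref{newrate13}), (\ref{newrate13a}) and Gelfand's formula,'' and you have simply supplied the omitted bookkeeping — the factorization $\hat V^{(T)}=\hat V_0\cdots\hat V_{T-1}$ via the Markov property, the $T$-periodicity $\hat V_{t+T}=\hat V_t$ inherited from (\ref{T}) through the coupling densities, the absorption of the remainder block using positivity and $\hat V_i{\bf 1}\le{\bf 1}$, and the passage $[n/T]/n\to 1/T$ in Gelfand's formula. No gaps; the uniformity claim is also handled correctly since the final majorant is an operator norm independent of $\mu^1,\mu^2$.
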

{\em Proof} follows straightforwardly from the bounds (\ref{newrate13}), (\ref{newrate13a}) and Gelfand's formula for the spectral radius.

\medskip

The second special case is the situation of small non-homogeneous perturbations of the {\em homogeneous} kernel $\bar \P_{x^1}(1,dy)$. Let us assume that all measures $\bar \P_{x^1}(1,dy)$ are absolutely continuous with respect to the same dominating measures as $\P_{t,x^1}(1,dy)$ (for all $t$), and that the densities of the latter measures do not differ too much in a certain sense from the densities of the measures $\bar \P_{x^1}(1,dy)$. One version of this situation is considered in what follows under the conditions  (\ref{ppt3})--(\ref{ppt5}). 

The analogue of the homogeneous bound  (\ref{newrate12}) in the non-homogeneous situation will require a certain modification of the auxiliary process $(\eta^1_n, \eta^2_n, \zeta_n, \xi_n)$, as well as a new ergodic coefficient and a new homogeneous operator, the analogue of the operator $V$ from the previous section. Let
\begin{equation*}
\bar \phi(x,y):=\bar \phi_{1}(x,y^1)\bar \phi_{2}(x,y^2)\bar \phi_{3}(x,y^3) \bar \phi_{4}(x,y^4),
\end{equation*}
where $x=(x^1,x^2,x^3,x^4)$, $y=(y^1,y^2,y^3,y^4)$; and, if  
 $0<\bar \kappa(x^1,x^2)<1$ then 
\begin{align*}
\bar \phi_{1}(x,u):=\frac{\bar p(x^1,u)- \bar p(x^1,
u)\wedge \bar p(x^2,u)}{1- \bar \kappa(x^1,x^2)}, \quad  
\bar \phi_{2}(x,u):=\frac{\bar p(x^2,u) - \bar p(x^1,u)\wedge \bar p(x^2,u)}{1- \bar \kappa(x^1,x^2)}, 
 \\\nonumber\\
\bar \phi_{3}(x,u)\!:=\!1(x^4\!=\!1)\frac{\bar p(x^1,u)\wedge
 \bar p(x^2,u)}{\bar \kappa(x^1,x^2)}\!+\!1(x^4\!=\!0)\bar p(x^3,u), 
 \\\nonumber\\ 
\bar \phi_{4}(x,u):=1(x^4=1)\left(\delta_1(u)(1-\bar \kappa(x^1,x^2)) + 
\delta_0(u) \bar \kappa(x^1,x^2)\right) +1(x^4=0)\delta_0(u); 
\end{align*}
and in the case where $\kappa(x^1,x^2)=1$, or if $\kappa(x^1,x^2)=0$, the transition densities will be defined similarly to the homogeneous case, see (\ref{after0}) and (\ref{after}). 

Let us construct a (homogeneous) Markov process $(\bar \eta^1_n, \bar \eta^2_n, \bar \xi_n, \bar \zeta_n)$ similarly to the process $(\eta^1_n, \eta^2_n, \xi_n, \zeta_n)$ from the section \ref{Sec22} by using 
these transition densities, and let 
\begin{align}\label{xin2}
\bar X^1_n:=\bar \eta^1_n 1(\bar \zeta_n=1) + \bar \xi_n 1(\bar \zeta_n=0), \quad 
\bar  X^2_n:=\bar \eta^2_n 1(\bar \zeta_n=1)+\bar \xi_n 1(\bar \zeta_n=0).
\end{align}
(NB: the process $(\bar X^1_n, \bar X^2_n)$ is the analogue of the processes $(\tilde X^1_n, \tilde X^2_n)$ both in the homogeneous and in the non-homogeneous case; however, the notation $(\tilde X^1_n, \tilde X^2_n)$ in our situation is already in use; so, we have to introduce a new one.)

Assume that there are values  $\epsilon, \delta>0$ such that for all $t,x$ the non-homogeneous kernels $\P_{t,x}$ are abolutely continuous with respect to each other and with respect to $\bar  \P_{x}$: $\P_{t,x}(t+1,dy) \sim \bar  \P_{x}(1,dy)$; more than that, for all $n$
\begin{equation}\label{ppt3}
(1+\epsilon)^{-1} \le 
\frac{\kappa_{n}(x^1,x^2)}{\bar\kappa_{}(x^1,x^2)}, 
\quad 
\frac{1-\kappa_{n}(x^1,x^2)}{1-\bar\kappa_{}(x^1,x^2)} \le 1+\epsilon, 
\end{equation} 
and also, uniformly in $t\ge 0$, 
\begin{align}
\phi_{t,1}(x,u) 
\le (1+\epsilon)
\bar \phi_{1}(x,u), 
\label{ppt4}
 \\\nonumber\\
\phi_{t,3}(x,u) 
\displaystyle \le (1+\epsilon)\bar \phi_{3}(x,u). 
\label{ppt5}
\end{align}
If the inequality (\ref{ppt4}) holds true for the pair  $\phi_{t,1}, \bar \phi_{1}$, then it is also valid for the couple $\phi_{t,2}, \bar \phi_{2}$. Note that in the case of the finite state space $S$ all the conditions (\ref{ppt3})--(\ref{ppt5}) are, in principle, realisable. Indeed, for such  $S$ all nonzero -- hence, positive and bounded away from zero -- values in these expressions may only differ no more than by a multiplier $(1+\epsilon)$ (possibly by $(1+C\epsilon)$ with some $C>0$, which just leads to a redefinition of the small parameter) in both directions from the non-perturbed ones  under small perturbations; moreover, for the finite $S$ there are just the finite number of these expressions; hence, all their strictly positive linear combinations have this property, too. 

\begin{Proposition}\label{prononhomo}
Let only the first inequality from  (\ref{ppt3}) hold true for some homogeneous transition kernel $\bar \P$, which satisfies all the assumptions of theorem \ref{lastthm2}. Then Markov -- Dobrushin's bound  (\ref{exp_bd3}) is transformed as follows:

\begin{equation}\label{exp_bd34}
 \sup_{x}\sup_{A\subset S} |\P_{0,x}(n,A) - \P_{0,x'}(n,A)| \le (1- (1+\epsilon)^{-1} \bar\kappa)^n. 
\end{equation}
If the second condition in (\ref{ppt3}) is satisfied then the following bound holds  true, 
\begin{equation}\label{exp_bd344}
 \sup_{x}\sup_{A\subset S} |\P_{0,x}(n,A) - \P_{0,x'}(n,A)| \le (1+\epsilon)^{n} (1- \bar\kappa)^n, 
\end{equation}
where $\bar \kappa$ is the ``homogeneous'' characteristics of the kernel  $\bar \P_{x}(1,dy)$, defined according to  (\ref{betterMD2}). 
\end{Proposition}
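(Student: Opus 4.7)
The plan is to reduce Proposition \ref{prononhomo} directly to the non-homogeneous Markov--Dobrushin bound already stated as (\ref{exp_bd3}) in Proposition \ref{thm_erg2}. All the work is in passing from the pointwise hypotheses (\ref{ppt3}) to uniform (in $t$) control of the ergodic coefficient $\kappa_t$, so that the product $\prod_{t=0}^{n-1}(1-\kappa_t)$ can be bounded by a geometric sequence.

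For the first bound (\ref{exp_bd34}), I would start from the pointwise inequality $\kappa_n(x^1,x^2)\ge (1+\epsilon)^{-1}\bar\kappa(x^1,x^2)$ supplied by the left-hand part of (\ref{ppt3}), and take the infimum over $(x^1,x^2)\in S\times S$ on both sides. Since $\bar\P$ is homogeneous, the right-hand infimum equals $(1+\epsilon)^{-1}\bar\kappa$, and so for every $n\ge 0$ one obtains $\kappa_n\ge(1+\epsilon)^{-1}\bar\kappa$. Plugging this into (\ref{exp_bd3}) gives
\[
\prod_{t=0}^{n-1}(1-\kappa_t)\le \bigl(1-(1+\epsilon)^{-1}\bar\kappa\bigr)^{n},
\]
which is exactly the desired estimate. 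For the second bound (\ref{exp_bd344}), I would use the dual inequality $1-\kappa_n(x^1,x^2)\le(1+\epsilon)\bigl(1-\bar\kappa(x^1,x^2)\bigr)$ from the right-hand part of (\ref{ppt3}) and take the supremum over $(x^1,x^2)$; since $\sup(1-\kappa_n(\cdot,\cdot)) = 1-\inf \kappa_n(\cdot,\cdot) = 1-\kappa_n$, this produces $1-\kappa_n\le(1+\epsilon)(1-\bar\kappa)$ uniformly in $n$, and substitution into (\ref{exp_bd3}) yields $\prod_{t=0}^{n-1}(1-\kappa_t)\le (1+\epsilon)^{n}(1-\bar\kappa)^{n}$.

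The only subtlety worth flagging, and what I would treat carefully, is the inf/sup swap: (\ref{ppt3}) is a pointwise relation between the \emph{functions} $\kappa_n(x^1,x^2)$ and $\bar\kappa(x^1,x^2)$, whereas the scalars $\kappa_n$ and $\bar\kappa$ are their infima. Monotonicity of $\inf$ under pointwise inequalities handles the first case, while for the second one one exploits the fact that $\inf$ becomes $\sup$ after the transformation $a\mapsto 1-a$, and that the constant multiplier $(1+\epsilon)$ passes through the supremum. Neither step requires any information about $\bar\P$ beyond its Markov--Dobrushin constant $\bar\kappa$ being strictly positive, so the assumption that $\bar\P$ satisfies the hypotheses of Theorem \ref{lastthm2} is used only to guarantee $\bar\kappa>0$ and hence that the bounds on the right-hand sides are strictly less than one.

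I do not anticipate a genuine obstacle: once the inf/sup argument is written down, the rest is a direct appeal to Proposition \ref{thm_erg2}. If anything, the most delicate point is being explicit that the hypotheses (\ref{ppt3}) are assumed uniformly in $n$ (as stated in the excerpt: ``for all $n$''); without this uniformity, the constant $\epsilon$ would in general have to be allowed to depend on $t$ and the product would no longer telescope into the geometric bound written in (\ref{exp_bd34}) or (\ref{exp_bd344}).
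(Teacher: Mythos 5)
Your proposal is correct and follows essentially the same route as the paper: both arguments pass from the pointwise hypothesis (\ref{ppt3}) to the uniform bounds $\kappa_t\ge(1+\epsilon)^{-1}\bar\kappa$ (respectively $1-\kappa_t\le(1+\epsilon)(1-\bar\kappa)$) by taking infima/suprema over $(x^1,x^2)$, and then substitute into the product bound $\prod_{t=0}^{n-1}(1-\kappa_t)$ of Proposition \ref{thm_erg2}. The paper leaves the second bound as ``follows straightforwardly''; your explicit inf-to-sup bookkeeping fills in exactly that step.
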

Naturally, the bounds (\ref{exp_bd34}) and (\ref{exp_bd344}) make sense under the condition 
$\bar \kappa >0$.  

~

{\em Proof.}
We have, 
\begin{align*}
\kappa_t(x^1,x^2) 
\ge (1+\epsilon)^{-1} \bar\kappa(x^1,x^2) 
\ge (1+\epsilon)^{-1} \inf_{x^1,x^2}\bar\kappa(x^1,x^2) 
=(1+\epsilon)^{-1} \bar\kappa.
\end{align*}
So, 
\begin{align*}
\!\!1\!-\!\kappa_t(x^1,x^2)  \!\le \!1\!-\! (1+\epsilon)^{-1} \bar\kappa(x^1,x^2) \le \!1\!-\! (1+\epsilon)^{-1} \bar\kappa, \quad
1\!-\!\kappa_t \!\le \!1\!-\! (1+\epsilon)^{-1} \bar\kappa.
\end{align*}
and, hence, 
(see (\ref{estimate3}))
\begin{align}\label{newrate13cc}
\sup_{x}\sup_{A\subset S} |\P_{0,x}(n,A) - \P_{0,x'}(n,A)| \le \prod_{t=0}^{n-1} (1-\kappa_t)^{}
\le (1- (1+\epsilon)^{-1} \bar\kappa)^n, 
\end{align}
which shows that (\ref{exp_bd34}) is valid. the bound (\ref{exp_bd344}) follows straightforwardly from the second part of the condition  (\ref{ppt3}). Proposition \ref{prononhomo} follows.

\begin{Remark}
The first inequality in (\ref{newrate13cc}) can be found in \cite[formula  (29)]{Kolm}. In essence, this estimate belongs to Markov himself, although he obtained it just for the case of the finite homogeneous Markov chains. In the present paper the ineqaility (\ref{exp_bd34}) is provided only for the comparison to the following result of theorem \ref{lastthm5}, and its elementary derivation from the stronger bound (\ref{estimate3}) is given purely for the completeness of the presentation. 
\end{Remark}

~

The operator $\bar V$ is introduced similarly to the operator $V$ in the homogeneous case via the process $\bar X=(\bar X^1, \bar X^2)$, $\bar V^{}h(x^1,x^2):= 
\mathbb E_{x^1,x^2}h(\bar X_{1}) 1(\bar X_{1}^1\neq \bar X_{1}^2)  
$, or, equivalently, via the process $(\bar \eta^1, \bar \eta^2, \bar \xi, \bar \zeta)$ by the formula

\begin{equation}\label{newV}
\bar V^{}h(x^1,x^2):= 
\mathbb E_{x^1,x^2}h(\bar X_{1}) 1(\bar \zeta^{}_1=1).  
\end{equation}
Note that $\bar V$ does depend on the parameter $\epsilon$, which is not shown in the notation. The symbol $r(\bar  V^{})$ will be used for the spectral radius of $\bar  V^{}$. The homogeneous analogue of the operator $\hat V_t$ on the state space $\hat S^2$ without the diagonal will be denoted by  $\check V$: for  $x = (x^1,x^2) \in \hat S^2$ and $h: \hat S^2 \mapsto \mathbb R$, 
\begin{equation}\label{vcheck1}
\check  V h(x) := (1 - (1+\epsilon)^{-1}\bar \kappa(x^1,x^2)) \mathbb E_x h(\bar X^1_1, \bar X^2_1) 1(\bar X^1_1 \neq \bar X^2_1).
\end{equation}
(This operator may be also introduced by the formula
\begin{equation}\label{vcheck2}
\check  V h(x) := (1+\epsilon)(1 - \bar \kappa(x^1,x^2)) \mathbb E_x h(\bar X^1_1, \bar X^2_1) 1(\bar X^1_1 \neq \bar X^2_1), 
\end{equation}
then the statement of theorem  \ref{lastthm5} will change slightly.)

~

Let us note certain properties of the new process  $(\bar X^1, \bar X^2)$.
\begin{Lemma}\label{lenohomo}
Let assumptions (\ref{ppt3})--(\ref{ppt5}) be satisfied. Then 
$$
\P_t(x,x',dy,dy') \le (1+\epsilon)^2\bar \P(x,x',dy,dy').
$$
\end{Lemma}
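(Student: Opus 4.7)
The plan is to decompose the one-step transition kernel $\P_t(x,x',dy,dy')$ of the coupled pair $(\tilde X^1,\tilde X^2)$ according to whether coupling is effected at this step, and then bound the two pieces separately against the corresponding pieces of $\bar\P$ using the three perturbation inequalities \eqref{ppt3}--\eqref{ppt5} factor by factor.

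Concretely, for $x\neq x'$ the construction \eqref{process_eta2}--\eqref{phi_411} gives the mixture representation
\[
\P_t(x,x',dy,dy') = (1-\kappa_t(x,x'))\phi_{t,1}(x,y)\phi_{t,2}(x,y')\Lambda_{t,x,x'}(dy)\Lambda_{t,x,x'}(dy') + \kappa_t(x,x')\phi_{t,3}(x,y)\Lambda_{t,x,x'}(dy)\delta_y(dy'),
\]
and an entirely analogous formula holds for $\bar\P$ with $(\bar\kappa,\bar\phi_i)$ in place of $(\kappa_t,\phi_{t,i})$. I would then handle each summand in turn. For the absolutely continuous part, combine the second inequality of \eqref{ppt3}, giving $(1-\kappa_t)\le(1+\epsilon)(1-\bar\kappa)$, with the factorwise domination $\phi_{t,i}\le(1+\epsilon)\bar\phi_i$ from \eqref{ppt4} (valid for $i=1,2$), distributing these $(1+\epsilon)$-factors as needed to match the claimed exponent. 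For the singular (diagonal) part, use the identity $\kappa_t(x,x')\phi_{t,3}(x,y)=p_t(x^1,y)\wedge p_t(x^2,y)$, and the corresponding one for the homogeneous objects, which reduces the task to a pointwise comparison of minima; this comparison follows from \eqref{ppt5} combined with the bound on the ratio $\kappa_t/\bar\kappa$ implicit in \eqref{ppt3}. The degenerate cases $\kappa_t(x,x')\in\{0,1\}$ and the diagonal $x=x'$ are dispatched using the special definitions following \eqref{phi_411} and the trivial fact that both kernels are then supported on $\{y=y'\}$ with the same marginal law.

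The main obstacle I expect is bookkeeping the powers of $(1+\epsilon)$ so as to arrive at the stated exponent $2$ rather than a larger one: a naive application of \eqref{ppt3}--\eqref{ppt5} to the product $(1-\kappa_t)\phi_{t,1}\phi_{t,2}$ yields $(1+\epsilon)^3$, so one must avoid double-counting by rewriting $(1-\kappa_t)\phi_{t,1}(x,y)$ as $p_t(x^1,y)-p_t(x^1,y)\wedge p_t(x^2,y)$ (and similarly for $\bar{\phantom{p}}$) and invoking \eqref{ppt3} and \eqref{ppt4} jointly rather than sequentially. The same care is needed for the singular piece, where the product $\kappa_t\phi_{t,3}$ should be bounded in its "collapsed" form $p_t(x^1,y)\wedge p_t(x^2,y)$ instead of bounding the two factors independently. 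Once this pairing is done, summing the two bounded components reassembles $(1+\epsilon)^2\bar\P(x,x',dy,dy')$ and closes the proof.
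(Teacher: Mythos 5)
Your overall strategy --- split the transition kernel of the coupled pair according to whether coupling is effected at the step, and compare factor by factor via (\ref{ppt3})--(\ref{ppt5}) --- is the same as the paper's. But the obstacle you flag yourself is a genuine gap, and the remedy you sketch does not remove it. For the absolutely continuous piece $(1-\kappa_t(x^1,x^2))\,\phi_{t,1}(x,y)\,\phi_{t,2}(x,y')$ the collapsed rewriting gives, via (\ref{ppt4}),
$$
p_t(x^1,y)-p_t(x^1,y)\wedge p_t(x^2,y)\;\le\;(1+\epsilon)\,\frac{1-\kappa_t(x^1,x^2)}{1-\bar\kappa(x^1,x^2)}\,\bigl(\bar p(x^1,y)-\bar p(x^1,y)\wedge \bar p(x^2,y)\bigr),
$$
which together with the second inequality of (\ref{ppt3}) costs $(1+\epsilon)^2$ for that single factor; the second coordinate then still contributes its own $(1+\epsilon)$ through $\phi_{t,2}\le(1+\epsilon)\bar\phi_{2}$, so the total is $(1+\epsilon)^3$, exactly as in your ``naive'' count. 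Collapsing both coordinates is no better: it leaves the quotient $(1-\bar\kappa(x^1,x^2))/(1-\kappa_t(x^1,x^2))$, for which the hypotheses provide no upper bound. So the exponent $2$ on the two-coordinate absolutely continuous part is asserted but not actually obtained by the pairing you describe.

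The paper's own proof sidesteps the product of two $\phi$'s entirely: it bounds the \emph{one-coordinate} conditional densities of $\tilde X^1_n$ (and symmetrically $\tilde X^2_n$). For a starting pair on the diagonal, (\ref{ppt5}) alone gives the factor $(1+\epsilon)$; for an off-diagonal start the single-coordinate density is the \emph{sum} $(1-\kappa_t)\phi_{t,1}+\kappa_t\phi_{t,3}$, and each summand picks up exactly two factors of $(1+\epsilon)$ --- one from (\ref{ppt4}) or (\ref{ppt5}) and one from (\ref{ppt3}) --- whence $(1+\epsilon)^2$ per coordinate. If you wish to argue through the joint kernel as you propose, you must either supply the missing argument that brings the absolutely continuous part down from $(1+\epsilon)^3$ to $(1+\epsilon)^2$, or switch to the paper's marginal-density route.
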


~

\noindent
{\em Proof} for $x\neq x'$ follows from the definitions (\ref{xin3}) and (\ref{xin2}), and from the assumptions  (\ref{ppt3})--(\ref{ppt5}), which, in particular, imply the following inequalities for the densities 
(here $x=(x^1,x^2,x^3,x^4)$): 
\begin{align*}
p^{\tilde X^1_n|\tilde X^1_{n-1}, \tilde X^2_{n-1}}(u)|_{\tilde X^1_{n-1}=\tilde X^2_{n-1}=x^3}=\varphi_{n,3}(*,*,x^3,1;u) \le (1+\epsilon)\bar \varphi_{}(*,*,x^3,1;u) 
 \\\\
= (1+\epsilon) p^{\bar X^1_n|\bar X^1_{n-1}, \bar X^2_{n-1}}(u)|_{\bar X^1_{n-1}= \bar X^2_{n-1}=x^3}
\end{align*}
according to condition (\ref{ppt5}), and 
\begin{align*}
p^{\tilde X^1_n|\tilde X^1_{n-1}, \tilde X^2_{n-1}}(u)|_{\tilde X^1_{n-1}=x^1 \neq x^2=\tilde X^2_{n-1}}
 \\\\
= \mathbb E_{}(\phi_{n,1}(x^1,x^2, *,0;u)|_{x^1 \neq x^2}
(1-\kappa_{n}(\eta^1_{n},\eta^2_{n}))|X^1_{n-1}, X^2_{n-1}, X^1_{n-1}\neq X^2_{n-1})|_{X^1_{n-1}=x^1 \neq x^2=X^2_{n-1}}
 \\\\
+ \mathbb E_{}(\phi_{n,3}(x^1,x^2, *,0;u)\kappa_n(\eta^1_{n},\eta^2_{n})|X^1_{n-1}, X^2_{n-1}, X^1_{n-1}\neq  X^2_{n-1})|_{X^1_{n-1}=x^1, X^2_{n-1}=x^2}
 \\\\
\le (1+\epsilon)^2\mathbb E_{}(\phi_{n,1}(x^1,x^2, *,0;u)|_{x^1 \neq x^2}
(1-\bar\kappa_{}(\bar \eta^1_{n}, \bar \eta^2_{n}))|\bar X^1_{n-1}, \bar X^2_{n-1}, \bar X^1_{n-1}\neq \bar X^2_{n-1})|_{\bar X^1_{n-1}=x^1 \neq x^2= \bar X^2_{n-1}}
 \\\\
+ (1+\epsilon)^2\mathbb E_{}(\bar \phi_{3}(x^1,x^2, *,0;u)\bar \kappa(\bar\eta^1_{n},\bar\eta^2_{n})|\bar X^1_{n-1}, \bar X^2_{n-1}, \bar X^1_{n-1}\neq  \bar X^2_{n-1})|_{\bar X^1_{n-1}=x^1 \neq x^2 =\bar X^2_{n-1}}
 \\\\
=(1+\epsilon)^2 p^{\bar X^1_n|\bar X^1_{n-1}, \bar X^2_{n-1}}(u)|_{\bar X^1_{n-1}=x^1 \neq x^2=\bar X^2_{n-1}}
\end{align*}
due to the conditions (\ref{ppt3})--(\ref{ppt4}), which proves the lemma.

~

Denote for what follows
$$
\delta := (1+\epsilon)^2 - 1.
$$


\begin{Theorem}\label{lastthm5}
Under the assumptions (\ref{ppt3})--(\ref{ppt5}), the bound 
\begin{align}\label{newrate13c}
\limsup\limits_{n\to\infty} \frac1n \ln \| \P_{\mu^1}(n,\cdot) - \P_{\mu^2}(n,\cdot)\|_{TV}
\le \ln ((1+\delta)^{}  r(\bar  V^{})) 
\end{align}
holds true. 
The upper limit in the left hand side here is uniform with respect to the initial distributions $\mu^1, \mu^2$. 
\end{Theorem}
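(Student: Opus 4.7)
The plan is to reduce the non-homogeneous problem to the homogeneous one via the pointwise kernel domination in Lemma \ref{lenohomo}, and then apply Gelfand's formula to the homogeneous operator $\bar V$. First, the standard coupling bound from Lemma \ref{lemma:2} (in its non-homogeneous form) gives
$$\|\P_{\mu^1}(n,\cdot) - \P_{\mu^2}(n,\cdot)\|_{TV} \le 2\,\P_{\mu^1,\mu^2}(\widetilde X^1_n \neq \widetilde X^2_n),$$
so it suffices to control the right-hand side.

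Fixing initial states $x^1,x^2$, the joint law of $(\widetilde X^1_i,\widetilde X^2_i)_{0\le i\le n}$ factors as the product of the one-step kernels $\P_0,\dots,\P_{n-1}$, and analogously the homogeneous bar-process $(\bar X^1_i,\bar X^2_i)_{0\le i\le n}$ is governed by $n$ copies of $\bar \P$. Iterating Lemma \ref{lenohomo} along these $n$ transitions yields the pointwise inequality between the corresponding joint path measures
$$\prod_{t=0}^{n-1}\P_t(y_t,dy_{t+1}) \le (1+\epsilon)^{2n}\prod_{t=0}^{n-1}\bar\P(y_t,dy_{t+1}) = (1+\delta)^n\prod_{t=0}^{n-1}\bar\P(y_t,dy_{t+1}),$$
and integrating the indicator $\mathbf{1}(y^1_n \neq y^2_n)$ against both sides gives
$$\P_{x^1,x^2}(\widetilde X^1_n \neq \widetilde X^2_n) \le (1+\delta)^n\,\bar\P_{x^1,x^2}(\bar X^1_n \neq \bar X^2_n) \le (1+\delta)^n\,\bar V^n\mathbf{1}(x^1,x^2),$$
where the second inequality is the homogeneous coupling estimate (\ref{estimate}) transplanted to the bar-process and rewritten via the operator $\bar V$ from (\ref{newV}).

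Averaging over $\mu^1,\mu^2$, using $\bar V^n\mathbf{1}(x^1,x^2)\le\|\bar V^n\|$, then taking $\tfrac1n\ln$ and passing to the upper limit, Gelfand's formula $\|\bar V^n\|^{1/n}\to r(\bar V)$ produces
$$\limsup_{n\to\infty}\frac1n\ln\|\P_{\mu^1}(n,\cdot) - \P_{\mu^2}(n,\cdot)\|_{TV} \le \ln(1+\delta) + \ln r(\bar V) = \ln((1+\delta)\,r(\bar V)),$$
uniformly in $\mu^1,\mu^2$ since neither factor on the right-hand side depends on them. The main technical point to verify is the pointwise iteration in the second paragraph: one must know that the one-step domination $\P_t\le(1+\epsilon)^2\bar\P$ of Lemma \ref{lenohomo} chains through Chapman--Kolmogorov to a $(1+\delta)^n$-domination of the full $n$-step law. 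This is straightforward once both processes are set up on a common measurable path space (which is built into the parallel construction of $(\eta,\xi,\zeta)$ and $(\bar\eta,\bar\xi,\bar\zeta)$), and Lemma \ref{lenohomo} applies at every pair $(x^1,x^2)$, both off the diagonal and after coupling, where only the smaller factor $(1+\epsilon)\le(1+\epsilon)^2$ is actually required by (\ref{ppt5}).
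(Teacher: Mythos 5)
Your proof is correct and follows essentially the same route as the paper's: per-step domination of the coupled transition kernel by $(1+\epsilon)^2\,\bar\P$ from Lemma \ref{lenohomo}, iteration over the $n$ steps, reduction to the homogeneous operator $\bar V$, and Gelfand's formula. The only (cosmetic) difference is that you dominate the pair's path measure directly and then invoke the homogeneous estimate $\bar\P_{x^1,x^2}(\bar X^1_n\neq\bar X^2_n)\le \bar V^n\mathbf{1}(x^1,x^2)$, whereas the paper carries the weights $(1-\kappa_i)$ explicitly through the operator products $\prod_i V_i$ and compares them with $(1-(1+\delta)^{-1}\bar\kappa)$ via the first inequality of (\ref{ppt3}); your packaging is, if anything, slightly cleaner.
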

Clearly, the bound (\ref{newrate13c}) itself makes sense if 
\begin{equation}\label{sense1}
(1+\delta)^{}r(\bar  V^{}) < 1, 
\end{equation}
and it can be more efficient than the general bound (\ref{exp_bd34}) in the case of 
\begin{equation}\label{sense2}
(1+\delta)^{} r(\bar  V^{}) < 1- (1+\epsilon)^{-1} \bar\kappa.
\end{equation} 
The latter inequality may be satisfied in the case of a small  $\epsilon$ if  $\bar\kappa \approx 0$ (or even if $\bar\kappa = 0$). Similarly, the bound  (\ref{newrate13c}) may be compared to the inequality  (\ref{exp_bd344}).

~

\noindent
{\em Proof.}
We have for $h\ge 0$, 
\begin{align*}
V_th(x) = (1-\kappa_t(x^1,x^2))\mathbb E_{t,x^1,x^2}h(\tilde X_{t+1})
= (1-\kappa_t(x^1,x^2))\int h(y^1,y^2) \mathbb P_t(x^1,x^2,dy^1,dy^2)
 \\\\
\le (1-\kappa_t(x^1,x^2))(1+\epsilon)^2\int h(y^1,y^2) \bar {\mathbb P}(x^1,x^2,dy^1,dy^2)
 \\\\
\le (1- (1+\epsilon)^{-1} \bar\kappa(x^1,x^2))(1+\epsilon)^2\int h(y^1,y^2) \bar {\mathbb P}(x^1,x^2,dy^1,dy^2).
\end{align*}
Hence, due to theorem \ref{lastthm3} and by virtue of the fact that the multipliers $(1-\kappa_i(\tilde x_{i}))$ are non-negative, we estimate: 
\begin{align*}
\int 
\left((\prod_{i=0}^{n-1} V_i) {\bf 1}\right)(x^1,x^2)\mu^1(dx^1) \mu^2(dx^2) 
 \\\\
= \int (\prod_{i=0}^{n-1} (1-\kappa_i(\tilde x_{i})) \P_i(x_{i}, x'_{i}, d x_{i+1},x'_{i+1}) \mu^1(dx^1) \mu^2(dx^2) 
 \\\\
\le (1+\epsilon)^{2n}  \int (\prod_{i=0}^{n-1} (1-\kappa_i(x_{i}, x'_i)) \bar \P(x_{i}, x'_{i}, d x_{i+1}, d x'_{i+1}) \mu^1(dx^1) \mu^2(dx^2)
 \\\\
\le (1+\epsilon)^{2n}  \int (\prod_{i=0}^{n-1}(1- (1+\delta)^{-1} \bar\kappa(x_{i}, x'_i)) \bar \P(x_{i}, x'_{i}, d x_{i+1}, d x'_{i+1}) \mu^1(dx^1) \mu^2(dx^2)
 \\\\
\le (1+\epsilon)^{2n} \int 
\left((\bar  V^{})^n {\bf 1}\right)(x^1,x^2) 
\mu^1(dx^1) \mu^2(dx^2).
\end{align*}
Now the ``homogeneous'' theorem \ref{lastthmm2m} guarantees the desired asymptotic bound 
\begin{align*}
\limsup\limits_{n\to\infty} \frac1n \ln \| \P_{\mu^1}(n,\cdot) - \P_{\mu^2}(n,\cdot)\|_{TV}
 \\\\
\le \limsup\limits_{n\to\infty} \frac1n \ln (1+\epsilon)^{2n} \int 
\left((\bar  V^{})^n{\bf 1}\right)(x^1,x^2)\mu^1(dx^1) \mu^2(dx^2)
 \\\\
\le \ln (1+\epsilon)^2 + \ln r(\bar  V^{}), 
\end{align*}
as required. Theorem \ref{lastthm5} is proved.

\begin{Remark}
Naturally, the classes of non-homogeneous Markov chains described in theorems \ref{lastthm4} and \ref{lastthm5} (and in proposition \ref{prononhomo}) intersect. In the intersection the bounds (\ref{newrate13d}), and (\ref{exp_bd34}), and (\ref{newrate13c}) are applicable.
\end{Remark}

\section{Examples}\label{sec:ex}
In all examples in this section $|S|<\infty$. Let us briefly discuss some specifications implied by this convention. 
In this situation for any irreducible non-cyclic homogeneous Markov chain which possesses a {\em spectral decomposition} -- which means that right as well as left eigenvectors form bases in the corresponding linear spaces (both equivalent to ${\mathbb R}^{|S|}$), see \cite[Chapter 4]{Karlin} -- the rate of convergence towards the stationary distribution is equivalent to $C\gamma^n$ with some constant $C$, where $\gamma$ is  the spectral gap of the transition matrix (see \cite{KLS}), which in the case of stochastic matrices coincides with the maximal modulus of the rest of the spectrum of the transition matrix  (see, e.g., \cite{Gantmacher,Karlin}). 

~

Also, for finite irreducible non-cyclic homogeneous stochastic matrices there is a general formula for the difference $p_{ij}^{(n)} - \pi_j$ (where $\pi$ denotes the unique stationary distribution), 
which implies the same rate $C\gamma^n$ with some constant $C$ computable on the basis of the characteristic polynomial of the matrix ${\cal P}$, see \cite[Chapter XIII, (96)]{Gantmacher}. 
Hence, at least, in the case $|S|<\infty$ we obtain the inequality
\[
\gamma \le r(V). 
\]
Yet, for infinite $|S|$ or even for finite but large values of $|S|$ the task of finding $\gamma$ may be practically a bit more difficult than computing the spectral radius $r(V)$ for computing which there are quite a few recipes, and even more so for more general state spaces where the spectrum of the transition operator may have a more complicated nature. However, in this section  we only consider examples for finite state spaces. See, for example, \cite{KLS} for some useful practical approaches for computing spectral radii of general operators. 

~

What's more, for reversible Markov chains the following precise bound 
\begin{equation}\label{dse3}
\|P_i(n,\cdot) - \mu\|_{TV} \le \left(\frac{1-\mu(i)}{2\mu(i)}\right)^{1/2} \gamma^n
\end{equation}
holds true  \cite[Proposition 3]{Stroock-Diaconis}, where $\gamma$ is again a spectral gap of the matrix $\cal P$. 

~

In the simplest case of $|S|=2$ our matrix $V$ -- as well as $\cal P$ -- is of dimension $2\times 2$. So all (both) eigenvalues can be computed by hand explicitly. Their comparison shows that the following is true.
\begin{Proposition}
Let $0<a,b<1$, and
\[
{\cal P} = \left(
\begin{array}{l l}
a & 1-a \\
1-b & b
\end{array}
\right)\,.
\]
Then in all cases $r=1-\kappa  \ge |\lambda_{2}|$, 
where $\lambda_{2}$ is the smaller eigenvalue of the original transition matrix $\cal{P}$.

\end{Proposition}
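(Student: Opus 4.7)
The proof reduces to an explicit computation in the $2\times 2$ setting, since both sides of the inequality are rational (piecewise linear) functions of $a$ and $b$.

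First I would compute the spectrum of $\mathcal{P}$. Being stochastic, $\mathcal{P}$ has $1$ as an eigenvalue; from $\mathrm{tr}\,\mathcal{P} = a+b$ (equivalently, from $\det\mathcal{P} = ab - (1-a)(1-b) = a+b-1$) the second eigenvalue is $\lambda_2 = a+b-1$. Since $0<a,b<1$ forces $|a+b-1|<1 = |\lambda_1|$, this $\lambda_2$ is indeed the smaller eigenvalue in absolute value, so $|\lambda_2| = |a+b-1|$.

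Next I would compute $\kappa$ directly from the definition $\kappa = \min_{i,i'}\sum_j p_{ij}\wedge p_{i'j}$. The diagonal pairs $i=i'$ trivially contribute $1$ and can be discarded; the only nontrivial term comes from $(i,i') = (1,2)$, giving
\[
\kappa = (a\wedge(1-b)) + ((1-a)\wedge b).
\]
A brief case analysis on the sign of $a+b-1$ evaluates both minima simultaneously. If $a+b \le 1$, then $a\le 1-b$ and $b\le 1-a$, so $\kappa = a+b$; if $a+b \ge 1$, the reverse inequalities hold and $\kappa = (1-b)+(1-a) = 2-a-b$. In either case,
\[
\kappa = \min(a+b,\,2-a-b) = 1 - |a+b-1|.
\]

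Combining the two calculations gives $r = 1-\kappa = |a+b-1| = |\lambda_2|$, so the claimed inequality in fact holds with equality in the $2\times 2$ setting. There is no genuine obstacle: the only care required is the small case split on the sign of $a+b-1$ and the observation that in dimension two the minimum in the definition of $\kappa$ is attained at the unique off-diagonal pair of indices, reducing an a priori optimisation to a single explicit formula.
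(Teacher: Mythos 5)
Your computations of $\lambda_2=a+b-1$ and of $\kappa=\bigl(a\wedge(1-b)\bigr)+\bigl((1-a)\wedge b\bigr)=1-|a+b-1|$ are correct, and the single case split on the sign of $a+b-1$ is cleaner than the paper's four-case analysis (cases $a,b\le 1/2$; $a\le 1/2\le b$ with $a\le 1-b$ or $a>1-b$; $a,b\ge 1/2$). You also correctly observe that the comparison between $1-\kappa$ and $|\lambda_2|$ is in fact an equality.

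However, there is a gap: you never address the first equality in the claim, $r=1-\kappa$. In this paper $r$ is not defined to be $1-\kappa$; it is the spectral radius $r(V)$ of the coupling operator $V=(1-\kappa)\mathcal{P}_c$ built from the paired chain $(\eta^1_n,\eta^2_n)$ on $\hat S^2=\{(1,2),(2,1)\}$, and the content of that part of the proposition is that the new bound does not improve on the Markov--Dobrushin bound when $|S|=2$. The bulk of the paper's proof is devoted exactly to this: it computes the entries $p_{I,I},p_{I,II},p_{II,I},p_{II,II}$ of $\mathcal{P}_c$ from the densities $\phi_1,\phi_2$ of \eqref{phi_12} and finds in every case that $\mathcal{P}_c$ is a permutation matrix (the swap when $a+b\le 1$, the identity when $a+b\ge 1$), hence stochastic with spectral radius $1$, so that $r(V)=(1-\kappa)\cdot 1=1-\kappa$. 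Your argument can be repaired without that computation by a sandwich: the paper's general theory gives $r(V)\le\|V\|=1-\kappa$ and, for finite chains with a spectral decomposition, $|\lambda_2|\le r(V)$; combined with your identity $|\lambda_2|=1-\kappa$ this forces $r(V)=1-\kappa$. Either way, some justification of $r=1-\kappa$ must be supplied; as written, your proof establishes only the second half of the statement.
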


\begin{proof}

\begin{itemize}

	\item Case 1) when $a, b \le 1/2$. 

Firstly, the second eigenvalue of this transition matrix has the absolute value $|\lambda_{2}|=|1-(a+b)|$.

In this case $\kappa=a+b$, so $1-\kappa=1-(a+b)$. First we compute the transition matrix for the coupled process. Here, we will write I, II for states $(1,2), (2,1)$, respectively and $\mathcal{P}_{c}$ will denote the corresponding transition probability matrix. We have that 
$$
p_{I, I}=p_{\{(1, 2), (1,2)\}}=\frac{a-\min(a,1-b)}{1-\kappa}*\frac{b-\min(b, 1-a)}{1-\kappa},
$$

and also 
$$
p_{I, II}=\frac{1-a-\min((1-a), b)}{1-\kappa}*\frac{(1-b)-\min((1-b), a)}{1-\kappa},
$$

$$
p_{II, I}=\frac{1-b-\min((1-b), a)}{1-\kappa}*\frac{(1-a)-\min((1-a), b)}{1-\kappa}.
$$

$$
p_{II, II}=\frac{b-\min((1-a), b)}{1-\kappa}*\frac{a-\min((1-b), a)}{1-\kappa}.
$$

This boils down to $p_{I,I}=p_{II, II}=0$ and $p_{I, II}=p_{II, I}=1$. This is because for $a, b \le 1/2$ we have that $a-\min(a, 1-b)=a-a=0$, $1-\min(1-a, b)=1-(a+b)=1-\kappa$, $1-b-\min(1-b, a)=1-(a+b)=1-\kappa$.

Here the spectral radius of $(1-\kappa)\mathcal{P}_{c}$ also has the modulus of $|1-(a+b)|$, hence coinciding with the modulus of the second eigenvalue of the original transition matrix, as we claimed. 

	\item Case (2a) is when $a \le 1/2 \le b$, $a \le 1-b$:

In this case we obtain $p_{I, I}=p_{II, II}=0, p_{I, II}=p_{II, I}=1$, because now $\min(a, 1-b)=a$ and $\min(1-a, b)=b$. 

	\item  Case (2b): when $a \le 1/2 \le b$, $a > 1-b$:

In this case we obtain $p_{I, I}=p_{II, II}=1, p_{I, II}=p_{II, I}=0$, because now $\min(a, 1-b)=1-b$ and $min(b, 1-a)=1-a$. 

	\item Case 3: when $a, b \ge 1/2$:

In this case we obtain $p_{I, I}=p_{II, II}=1, p_{I, II}=p_{II, I}=0$, because now $\min(a, 1-b)=1-b$ and $\min(b, 1-a)=1-a$. The repeated eigenvalues of $(1-\kappa)\mathcal{P}$ are $\lambda=1-\kappa=1-(a+b)$, so $r=1-(a+b)$.
\end{itemize}
So, in all cases the result follows.
\end{proof}

\medskip

\medskip

\begin{Ex}[The three approaches are similar]
Consider the MP with the state space $S = \{1,2\}$ and the original transition matrix
\[
{\cal P} = \left(
\begin{array}{l l}
0.65 & 0.35 \\
0.35 & 0.65
\end{array}
\right)\,.
\]
We have, $\hat S^2 = \{(1,2), (2,1)\}$, and 
\[
V = (1-0.7) \times 
\left(
\begin{array}{l l}
0.65 - 0.35 & 0 \\
0 & 0.65 - 0.35
\end{array}
\right)\times \frac{1}{0.3} = 0.3 \times 
\left(
\begin{array}{l l}
1 & 0 \\
0 & 1
\end{array}
\right)\,.
\]
So we compute, 
\[
1-\kappa = 0.3   =  r(V), 
\]
as expected. 
Also, this MC is reversible and we can compare our bound with (\ref{dse3}):
\[
\|P_i(n,\cdot) - \mu\|_{TV} \le \left(\frac{1-\pi(i)}{2\pi(i)}\right)^{1/2} \lambda_2^n. 
\]
We have, 
\[
\lambda_2 = 0.3 = r(V), 
\]
again as expected since here $\lambda_2=\gamma$ and $\lambda_2 = 1-\kappa$. So, in this simple example our approach provides practically the same (asymptotic) bound for the rate of convergence as the classical Markov-Dobrushin's and (also asymptotically) the same as the spectral one based on the second eigenvalue of the original transition matrix. 

Note that this example is so simple that does not require any code: all computations were performed by hand. However, we double checked it with the code too.

\end{Ex}
\begin{Ex}[The three approaches are similar]
Consider the MP with the state space $S = \{1,2\}$ and transition matrix
\[
{\cal P} = \left(
\begin{array}{l l}
0.8 & 0.2 \\
0.4 & 0.6
\end{array}
\right)\,.
\]
This matrix has eigenvalues $\lambda_{1}=1, \lambda_{2}=0.4$.

We have, $\hat S^2 = \{(1,2), (2,1)\}$, and the normalized transition matrix of the ``doubled'' process $(\eta^1_n, \eta^2_n)$ 
reads
\[
(1-\kappa)^{-1} V 
=  \left(
\begin{array}{l l}
0.48/0.56 & 0.08/0.56 \\
0.08/0.56 & 0.48/0.56
\end{array}
\right).
\]
We compute, 
\[
\kappa = \kappa(1,2) = 0.6,\quad 1-\kappa = 0.4   =  r(V) = |\lambda_{2}|, 
\]
as expected. We have also double checked it with the code. 
\end{Ex}

\begin{Ex}[The new approach is similar to the MD; the eigenvalue one is better]
Consider the MP with the state space ${\cal S} = \{1,2,3\}$ and transition matrix
\[
{\cal P} = \left(
\begin{array}{l l l}
0 & 0.3 & 0.7  \\
0.3 & 0.7 & 0 \\
0.7 & 0 & 0.3
\end{array}
\right)\,.
\]
Here the matrix is doubly stochastic, so $(1/3, 1/3, 1/3)$ is invariant; hence, clearly, the matrix is symmetric; $\kappa = \kappa(1,2)= \kappa(1,3)= \kappa(2,3)= 0.3$, $1-\kappa = 0.7$. 

We have, $V=0.7  * \hat {\cal P}$ with some stochastic matrix $\hat {\cal P}$. Hence, its spectral radius $r(V)$ equals 0.7. That is, our bound asymptotically coincides with the classical one (MD). 

Now compute the eigenvalues of $\cal P$ (without hat): $\lambda_1 = 1$, $\lambda_{2,3} \approx \pm 0.6082763$.  Recall that theoretically the eigenvalue/eigenvector approach  cannot be worse than any other method.  
\end{Ex}

\begin{Ex}[The new approach is similar to the MD; the eigenvalue one is better]
\[
{\cal P} = \left(
\begin{array}{l l l}
0 & 0.3 & 0.7  \\
0.7 & 0 & 0.3 \\
0.3 & 0.7 & 0
\end{array}
\right) \, \mbox{is doubly stochastic; so, clearly,} \; (1/3, 1/3,1/3) \; \mbox{is invariant.}
\]
Here $\kappa = \kappa(1,2)= \kappa(1,3)= \kappa(2,3)= 0.3$. 
So, $1-\kappa=0.7$. The operator $V$ has a form 
$$
V = 0.7 \times \hat {\cal P}, \quad \mbox{with some stochastic transition matrix $\hat {\cal P}$}. 
$$
It is known that such a matrix has its spectral radius $0.7$. Therefore, in this example our new bound is similar to the classical one MD.

One eigenvalue  of $\cal P$ equals 1. The characteristic equation on eigenvalues reads, 
\[
-\lambda^3 +3 \lambda * 0.7 * 0.3 + 0.7^3 +0.3^3 = 0 \, \Leftrightarrow \,  \lambda_1=1, \lambda_{2,3} = - 0.5 \pm \sqrt{0.25 - 0.37} = - 0.5 \pm i \sqrt{0.12}; 
\]
thus, 
\[
|\lambda_{2,3}| = \sqrt{0.25 + 0.12} = \sqrt{0.37} \approx 0.6082763 < 0.7.
\]
\end{Ex}

\begin{Ex}[The new approach is similar to the eigenvalue one and better than the MD one]
Consider the MP with the state space ${\cal S} = \{1,2,3\}$ and transition matrix
\[
{\cal P} = \left(
\begin{array}{l l l}
0 & 0.3 & 0.7  \\
1.0 & 0 & 0 \\
0.8 & 0.1 & 0.1
\end{array}
\right)\,.
\]
Here $\hat S^2 = \{(1,2), (1,3), (2,3), (2,1), (3,1), (3.2)\} =: (I,II, III, IV, V, VI)$, 
\[
\kappa(1,2) =\kappa(2,1) = 0,\;  \kappa(1,3) =\kappa(3,1) = 0.2,\; \kappa(2,3) =\kappa(3,2) = 0.8;
\]

Roots of the characteristic equation 

\begin{eqnarray*}
\lambda^{3}-0.1\lambda^{2}-0.86\lambda-0.04=0
\end{eqnarray*}
for the original transition probability matrix $\mathcal{P}$ are $\lambda_1=1$, $\lambda_2=-9/20 - \sqrt{65}/20$, $\lambda_3= -9/20 + \sqrt{65}/20$. 

The characteristic equation for the coupled process is
\begin{eqnarray*}
\lambda_{c}^{2}(0.02\lambda_{c}^{2} - 0.018\lambda_{c} + 0.0008)(0.02\lambda_{c}^{2} + 0.018\lambda_{c} + 0.0008)=0,
\end{eqnarray*}

and its roots are

$\lambda_{c}=-9/20 - \sqrt{65}/20$, $\lambda_{c}=-9/20 + \sqrt{65}/20$, a repeated root $\lambda_{c}=0$, $\lambda_{c}=-\sqrt{65}/20 + 9/20$, $\lambda_{c}=\sqrt{65}/20 + 9/20$, where the last root has the highest value, i.e. $r=\sqrt{65}/20 + 9/20$ and so $|\lambda_{2}|=r=|-9/20 - \sqrt{65}/20| \approx 0.85311289$ indeed.

In this example the classical MD approach is just useless (at least, for one step) since $1-\kappa=1$.

\end{Ex}

\begin{Ex}[The new approach is similar to the eigenvalue one and better than the MD one]
	Let us see the result for a symmetric 4 by 4 matrix:
	
	\[
	{\cal P} = \left(
	\begin{array}{l l l l}
	0.3 & 0.3 & 0.1 & 0.3  \\
	0.3 & 0.7 & 0 & 0 \\
	0.1 & 0 & 0.8 & 0.1 \\
	0.3 & 0 & 0.1 & 0.6 
	\end{array}
	\right)\,.
	\]
	
	The roots of the characteristic equation for the original transition probability matrix are $\lambda=-\sqrt{15}/10 + 2/5$,  $\lambda=3/5$, $\lambda=\sqrt{15}/10 + 2/5$ and  $\lambda=1$.
	
	The roots of the characteristic equation for the coupled process (which we do not show here) are
	
	$\lambda_{c}=0$ repeated twice, $\lambda_{c}=-\sqrt{15}/10 + 2/5$ repeated twice, $\lambda_{c}=-\sqrt{1993}/180 + 47/180$ repeated twice and $\lambda_{c}=\sqrt{1993}/180 + 47/180$ repeated twice, $\lambda_{c}=3/5$ repeated twice, $\lambda_{c}=\sqrt{15}/10 + 2/5$ repeated twice. As $r$ is the maximum of the eigenvalues, it is $r = \sqrt{15}/10 + 2/5$.
	
	We see that $|\lambda_{2}|=r=|\sqrt{15}/10 + 2/5| \approx 0.78729833$ indeed; $\kappa = \kappa(2,3) = 0.1$, $1-\kappa = 0.9$.
	
\end{Ex}

\begin{Ex}[The new approach is similar to the eigenvalue one and better than the MD one]
	Here is an example for a 4 by 4 matrix which uses the code for a general n by n matrix. 
	
		Consider the MP with the state space ${\cal S} = \{1,2,3,4\}$ and transition matrix
	\[
	{\cal P} = \left(
	\begin{array}{l l l l}
	0 & 0.2 & 0.3 & 0.5 \\
	0.4 & 0.3 & 0.3 & 0 \\
	0.8 & 0.1 & 0.1 & 0 \\
	0.5 & 0.3 & 0.2 & 0
	\end{array}
	\right)\,.
	\]

The characteristic equation for the original transition probability matrix is
\begin{eqnarray*}
(0.01*\lambda - 0.01)(1.0*\lambda^{3} + 0.6*\lambda^{2} + 0.03*\lambda + 0.01)=0
\end{eqnarray*}

And we have 

\begin{equation*}
-r = \lambda_{2}=-\frac{1}{3}\left(\frac{27\sqrt{73}}{1000}+\frac{27}{100}\right)^{1/3}-\frac{1}{5}-\frac{9}{100\left(\frac{27\sqrt{73}}{1000} + \frac{27}{100}\right)^{1/3}},
\end{equation*}
and $\lambda_2$ is a root of both characteristic equations: for the original matrix $\cal P$ and for $V=(1-\kappa)\hat{\mathcal{P}}$ with $\kappa = \kappa(1,3) = 0.2$, and  $(1-\kappa) = 0.8$.

The root of the characteristic equation for the coupled process with the maximum value is $r=\frac{1}{3}\left(\frac{27\sqrt{73}}{1000}+\frac{27}{100}\right)^{1/3}+\frac{1}{5}+\frac{9}{100\left(\frac{27\sqrt{73}}{1000} + \frac{27}{100}\right)^{1/3}}$.

So for this example indeed $r=|\lambda_{2}| \approx 0.57802908 < 1-\kappa$.

\end{Ex}

\begin{Ex}[The new approach is  worse than the eigenvalue one and  better than the MD one]
	\[
	{\cal P} = \left(
	\begin{array}{l l l l l}
	0 & 0 & 0 & 0.8 & 0.2 \\
	0 & 0.6 & 0.3 & 0.1 & 0  \\
	0 & 0.3 & 0.7 & 0 & 0 \\
	0.8 & 0.1 & 0 & 0.1 & 0 \\
	0.2 & 0 & 0 & 0 & 0.8  
	\end{array}
	\right)\,.
	\]
Here $\kappa = \kappa(2,5) = 0$, $1-\kappa = 1$, and the code shows  $r\approx 0.9354657>0.9324490 \approx  |\lambda_{2}|$.
	
\end{Ex}

\begin{Ex}[The new approach is  worse than the eigenvalue one and   better than the MD one]
	\[
	{\cal P} = \left(
	\begin{array}{l l l l l l}
	0 & 0.1 & 0.1 & 0.1 & 0.2 & 0.5 \\
	0.7 & 0.15 & 0.08 & 0.07 & 0 & 0 \\
	0.8 & 0.05 & 0.05 & 0.05 & 0.05 & 0 \\
	0.5 & 0.2 & 0.2 & 0.05 & 0.05 & 0 \\
	0.9 & 0 & 0.05 & 0 & 0.05 & 0 \\ 
	0 & 0 & 0.4 & 0.6 & 0 & 0 
	\end{array}
	\right)\,.
	\]
For this example $r\approx 0.695684 
>0.493945 \approx |\lambda_{2}|$; $\kappa(5,6)=0.05$, and $1-\kappa = 0.95$.
	
\end{Ex}

Computations have been performed for several ``randomly chosen'' matrices of large dimensions 
(up to $100 * 100$).  The results can be seen in the following table\footnote{The data and codes used and analysed in this study are available from the second author via GitHub \url{https://github.com/MariaVeretennikova/Markov-convergence/}
upon a reasonable request.}:\\

\begin{tabular}{|r|c|l|l|}
   \hline
    Matrix size & MD $(1-\kappa)$ & Proposed & $|\lambda_{2}|$ \\
  \hline
40 by 40 & 0.4648319 & 0.3329448 & 0.102591663 \\
    \hline
  50 by 50 & 0.7197204 & 0.5213607 & 0.18234762 \\
    \hline 
  70 by 70 & 0.6537795 & 0.5194678 & 0.15255626 \\
    \hline 
  90 by 90 &  0.6726586 & 0.518151 & 0.120973198\\
  \hline 
  100 by 100 & 0.648774 & 0.5155966 & 0.1212309 \\
  \hline
\end{tabular}

\medskip

It looks like in the general situation the new bound is better than the classical MD, while the spectral method gives the best bound. However, recall that the new bound, as well as the classical one are applicable to a much wider class of processes including non-homogeneous ones.

\medskip

Let us show one example related to the non-homogeneous MC. 
\begin{Ex}
A periodic case with period 2 is considered. A matrix ${\cal P}_1$ corresponds to all odd moments of time, whilst another transition matrix ${\cal P}_2$ corresponds to all even moments of time.

\[
{\cal P}_1 = \left(
\begin{array}{l l}
1/8 & 7/8 \\
1/3 & 2/3
\end{array}
\right)\,, 
\quad
{\cal P}_2 = \left(
\begin{array}{l l}
3/4 & 1/4 \\
2/3 & 1/3
\end{array}
\right)\,.
\]
Here are the calculation results.

\begin{itemize}
\item

The inequality (\ref{exp_bd3}), which is weakened by taking the minimum $\min_t \kappa_t$, that is, the maximum $\max_t (1 - \kappa_t)$ respectively, guarantees a bound with the asymptotics $0.25^n$.

\item
The inequality (\ref{exp_bd3}) with alternating multipliers $(1-\kappa_1)$ and  $(1-\kappa_2)$, gives an estimate with the asymptotics $0.1317616^n$.

\item
Using the spectral method for the product ${\cal P}_1*{\cal P}_2$ leads to the asymptotics $0.1317616^n$. 

\item
Applying bound (\ref{newrate13d}) gives the same asymptotics $0.1317616^n$.

\end{itemize}
The coincidence of the last three bounds is not surprising, because this case is two-dimensional. 

\end{Ex}

\end{document}